\newtheorem{thm}{Theorem}[section]
\newtheorem{lem}[thm]{Lemma}
\newtheorem{exa}[thm]{Example}
\newtheorem{rem}[thm]{Remark}
\theoremstyle{definition}
\newcommand{\scr}[1]{\mathscr #1}
\definecolor{wco}{rgb}{0.5,0.2,0.3}
\numberwithin{equation}{section} \theoremstyle{remark}
\newcommand{\ua}{\uparrow}
\title{{\bf
Log-Sobolev Inequalities and Exponential Ergodicity for  Non-degenerate and Degenerate  McKean-Vlasov SDEs}\footnote{
Xing Huang  is Supported in
 part by  National Key R\&D Program of China (No. 2022YFA1006000) and NNSFC (12271398). Eva Kopfer is supported by the German Research Foundation
 		through the Hausdorff Center for Mathematics and the Collaborative Research Center 1060. Panpan Ren is supported by NNSFC (12301180), RGC (21301925) and Research Center for Nonlinear Analysis at The Hong Kong Polytechnic University. } }
\author{
{\bf   Xing Huang$^{(a)}$,  Eva Kopfer$^{(b)}$,  Panpan Ren$^{(c)}$}\\
\footnotesize{$^{a)}$  Center for Applied Mathematics, Tianjin University, Tianjin 300072, China}\\
\footnotesize{$^{b)}$ Institut f\"ur Angewandte Mathematik, Universit\"at Bonn, Endenicher Allee 60,  Bonn,  Germany }\\
\footnotesize{$^{c)}$ Department of Mathematics, City University of  Hong Kong, Tat Chee Avenue, Hong Kong,  China }\\
\footnotesize{ xinghuang@tju.edu.cn;  eva.kopfer@iam.uni-bonn.de;  panparen@cityu.edu.hk}}
\begin{document}
\allowdisplaybreaks
\def\R{\mathbb R}  \def\ff{\frac} \def\ss{\sqrt} \def\B{\mathbf
B} \def\W{\mathbb W}
\def\N{\mathbb N} \def\kk{\kappa} \def\m{{\bf m}}
\def\ee{\varepsilon}\def\ddd{D^*}
\def\dd{\delta} \def\DD{\Delta} \def\vv{\varepsilon} \def\rr{\rho}
\def\<{\langle} \def\>{\rangle} \def\GG{\Gamma} \def\gg{\gamma}
  \def\nn{\nabla} \def\pp{\partial} \def\E{\mathbb E}
\def\d{\text{\rm{d}}} \def\bb{\beta} \def\aa{\alpha} \def\D{\scr D}
  \def\si{\sigma} \def\ess{\text{\rm{ess}}}
\def\beg{\begin} \def\beq{\begin{equation}}  \def\F{\scr F}
\def\Ric{\text{\rm{Ric}}} \def\Hess{\text{\rm{Hess}}}
\def\e{\text{\rm{e}}} \def\ua{\underline a} \def\OO{\Omega}  \def\oo{\omega}
 \def\tt{\tilde} \def\Ric{\text{\rm{Ric}}}
\def\cut{\text{\rm{cut}}} \def\P{\mathbb P} \def\ifn{I_n(f^{\bigotimes n})}
\def\C{\scr C}      \def\aaa{\mathbf{r}}     \def\r{r}
\def\gap{\text{\rm{gap}}} \def\prr{\pi_{{\bf m},\varrho}}  \def\r{\mathbf r}
\def\Z{\mathbb Z} \def\vrr{\varrho} \def\ll{\lambda}
\def\L{\scr L}\def\Tt{\tt} \def\TT{\tt}\def\II{\mathbb I}
\def\i{{\rm in}}\def\Sect{{\rm Sect}}  \def\H{\mathbb H}
\def\M{\scr M}\def\Q{\mathbb Q} \def\texto{\text{o}} \def\LL{\Lambda}
\def\Rank{{\rm Rank}} \def\B{\scr B} \def\i{{\rm i}} \def\HR{\hat{\R}^d}
\def\to{\rightarrow}\def\l{\ell}\def\iint{\int}
\def\EE{\scr E}\def\Cut{{\rm Cut}}
\def\A{\scr A} \def\Lip{{\rm Lip}}
\def\BB{\scr B}\def\Ent{{\rm Ent}}\def\L{\scr L}
\def\R{\mathbb R}  \def\ff{\frac} \def\ss{\sqrt} \def\B{\mathbf
B}
\def\N{\mathbb N} \def\kk{\kappa} \def\m{{\bf m}}
\def\dd{\delta} \def\DD{\Delta} \def\vv{\varepsilon} \def\rr{\rho}
\def\<{\langle} \def\>{\rangle} \def\GG{\Gamma} \def\gg{\gamma}
  \def\nn{\nabla} \def\pp{\partial} \def\E{\mathbb E}
\def\d{\text{\rm{d}}} \def\bb{\beta} \def\aa{\alpha} \def\D{\scr D}
  \def\si{\sigma} \def\ess{\text{\rm{ess}}}
\def\beg{\begin} \def\beq{\begin{equation}}  \def\F{\scr F}
\def\Ric{\text{\rm{Ric}}} \def\Hess{\text{\rm{Hess}}}
\def\e{\text{\rm{e}}} \def\ua{\underline a} \def\OO{\Omega}  \def\oo{\omega}
 \def\tt{\tilde} \def\Ric{\text{\rm{Ric}}}
\def\cut{\text{\rm{cut}}} \def\P{\mathbb P} \def\ifn{I_n(f^{\bigotimes n})}
\def\C{\scr C}      \def\aaa{\mathbf{r}}     \def\r{r}
\def\gap{\text{\rm{gap}}} \def\prr{\pi_{{\bf m},\varrho}}  \def\r{\mathbf r}
\def\Z{\mathbb Z} \def\vrr{\varrho} \def\ll{\lambda}
\def\L{\scr L}\def\Tt{\tt} \def\TT{\tt}\def\II{\mathbb I}
\def\i{{\rm in}}\def\Sect{{\rm Sect}}  \def\H{\mathbb H}
\def\M{\scr M}\def\Q{\mathbb Q} \def\texto{\text{o}} \def\LL{\Lambda}
\def\Rank{{\rm Rank}} \def\B{\scr B} \def\i{{\rm i}} \def\HR{\hat{\R}^d}
\def\to{\rightarrow}\def\l{\ell}\def\BB{\mathbb B}
\def\8{\infty}\def\I{1}\def\U{\scr U} \def\n{{\mathbf n}}\def\v{V}
\maketitle

\begin{abstract}
The exponential ergodicity of partially dissipative McKean-Vlasov SDEs in the \(L^1\)-Wasserstein distance has been extensively studied using asymptotic reflection coupling. However, the reflection coupling method is not applicable for the exponential ergodicity in $L^2$-Wasserstein distance and relative entropy. In this paper, we first establish uniform log-Sobolev inequalities (in the frozen measure variable with bounded second moments) for the invariant probability measure of the corresponding SDEs with frozen distribution. Second, for the McKean-Vlasov SDEs, we combine the log-Harnack inequality and Talagrand's inequality to derive exponential ergodicity in both
$L^2$-Wasserstein distance and relative entropy. Furthermore, we extend these main results to the case of degenerate diffusion.
 \end{abstract}

\noindent
 AMS subject Classification:\  60H10, 60K35, 82C22.   \\
\noindent
 Keywords: Log-Sobolev inequality, Dimension-free Harnack inequality, Talagrand inequality,  McKean-Vlasov SDEs, exponential ergodicity, $L^2$-Wasserstein distance
 \vskip 2cm

 \tableofcontents
\section{Introduction}
\subsection{Backgroud on McKean-Vlasov SDEs}

The McKean-Vlasov SDE, first introduced by McKean \cite{McKean}, was developed to study the nonlinear Fokker-Planck equation. The idea goes back to Kac \cite{Kac,Kac1}, who characterized nonlinear PDEs in Vlasov's kinetic theory. The main tool is the so-called ``propagation of chaos" which builds a corner stone in mean field interacting particle systems, see e.g. \cite{CD,meleard1996,SZ}.
In general, the McKean-Vlasov SDE is given by
\begin{align}\label{E1}
\d X_t=b_t(X_t,\L_{X_t})\d t+\sigma_t(X_t,\L_{X_t})\d W_t,
\end{align}
where $\{W_t\}_{t\geq 0}$ is an $n$-dimensional Brownian motion on some complete filtered probability space $(\Omega, \scr F, (\scr F_t)_{t\geq 0},\P)$ and $\L_{X_t}$ is the distribution of $X_t$. The coefficients
 $b:[0,\infty)\times \R^d\times\scr P(\R^d)\to\R^d$, $\sigma:[0,\infty)\times \R^d\times\scr P(\R^d)\to\R^d\otimes\R^{n}$ are assumed to be measurable and bounded on bounded sets, where
 $\scr P(\R^d)$ is the set of all probability measures on $\R^d$ equipped with the weak topology. We will consider the solution of \eqref{E1} in a subspace of $\scr P(\R^d)$. To this end, for $p\geq 1$, let
$$\scr P_p(\R^d):=\big\{\mu\in \scr P(\R^d): \|\mu\|_p:=\mu(|\cdot|^p)^{\ff 1 p}<\infty\big\},$$
which is a Polish space under the  $L^p$-Wasserstein distance
$$\W_p(\mu,\nu)= \inf_{\pi\in \C(\mu,\nu)} \bigg(\int_{\R^d\times\R^d} |x-y|^p \pi(\d x,\d y)\bigg)^{\ff 1 {p}},\ \ \mu,\nu\in\scr P_p(\R^d), $$ where $\C(\mu,\nu)$ is the set of all couplings of $\mu$ and $\nu$.

When \eqref{E1} is well-posed in $\scr P_p(\R^d)$, for given $\gamma\in\scr P_p(\R^d)$ we denote $P_t^*\gamma=\L_{X_t}$ with $\L_{X_0}=\gamma$.

Recall that the relative entropy between $\mu,\nu\in\scr P(\R^d)$ is defined as
$$\mathrm{Ent}(\nu|\mu)=\left\{
  \begin{array}{ll}
    \nu(\log(\frac{\d \nu}{\d \mu})), & \hbox{$\nu$ \text{is absolutely continuous w.r.t.} $\mu$;} \\
    \infty, & \hbox{otherwise.}
  \end{array}
\right.$$

In this paper, we aim to advance the study of functional inequalities for McKean-Vlasov SDEs and their associated equations with frozen distribution and to apply these results to analyze exponential ergodicity in both the $L^2$-Wasserstein distance and relative entropy.  Our approach builds on suitable Wang's Harnack inequality and Wang's observation in \cite{W2020}(and
 \cite{Villani} in the kinetic case), which states that exponential ergodicity in relative entropy can be deduced from log-Sobolev inequalities.
To place our results into context, we briefly review key results on McKean-Vlasov SDEs, focusing on  log-Sobolev inequalities and exponential ergodicity under various metrics.

\subsection{Related works}

\subsubsection{Log-Sobolev inequalities for McKean-Vlasov SDEs}
We begin by reviewing existing results on log-Sobolev inequalities.
The study of log-Sobolev inequalities originated in the works of Leonard Gross \cite{Gross}, who introduced this for Gaussian measures. The seminal work by Bakry and \'Emery \cite{BE}  links the log-Sobolev inequality to lower curvature bounds of diffusion processes via the so-called $\Gamma$-calculus introduced therein. Since then, log-Sobolev inequalities are an integral part in the study of SDEs and PDEs, e.g. in \cite{Carlen, Diaconis,GLWZ, Ledoux,M,OV,Wanglog}.

Recently, the log-Sobolev inequality for McKean-Vlasov SDEs also attracts much attention.
For SDEs of the form
\[
\d X_t = b(X_t, \scr L_{X_t})\, \d t + \sigma \, \d W_t,
\]
\cite[Corollary 2.7]{LL} establishes a uniform-in-time log-Sobolev inequality. They assume that $\sigma = I_{d\times d}$,
\[
b(x, \mu) = -\nabla U^1(x) + \int_{\R^d} -\nabla U^2(x-y)\, \mu(\d y),
\]
$U^1, U^2$ are twice continuously differentiable, $\nabla^2(U^1 + U^2) \ge \alpha I_{d\times d}$ for some $\alpha>0$, $\nabla U^2$ is even and bounded or Lipschitz, and $\nabla U^i$ satisfies certain monotonicity and growth conditions. The result holds provided the Lipschitz constant of $\nabla U^2$ is smaller than $\alpha$ and the initial law $\scr{L}_{X_0}$ satisfies a log-Sobolev inequality.

On the torus $\mathbb{T}^d$, similar results are obtained under convolution-type interactions. In \cite[Corollary 2.9]{LL}, when $b(x, \mu) = \int_{\mathbb{T}^d} K(x-y)\, \mu(\d y)$ for a Lipschitz kernel $K$, a uniform log-Sobolev inequality holds if $\|\mathrm{div}\, K\|_\infty$ is sufficiently small and the initial density has bounded derivatives and is uniformly bounded away from zero and infinity. \cite[Theorem 2.5]{GLM} extends this by allowing divergence-free interaction kernels of the form $K = \mathrm{div}\, V$, again under smoothness and boundedness conditions on the initial density, using the Holley-Stroock perturbation lemma and the log-Sobolev inequality of the uniform distribution on $\mathbb{T}^d$.

For time-inhomogeneous SDEs, \cite[Theorem 1.3]{MRW} shows that a uniform-in-time log-Sobolev inequality holds when the drift $b_t$ satisfies a local monotonicity condition inside a ball, a strong dissipativity condition outside, and a mild growth condition, provided the diffusion coefficient $\sigma^2$ is large enough. This result is then applied to McKean-Vlasov SDEs in \cite[Proposition 3.9]{MRW} with drifts of the form $-\nabla U - \int \nabla_x W(\cdot, y)\, \scr{L}_{X_t}(\d y)$, where $W$ is split into a bounded component and a Lipschitz component with sufficiently small Lipschitz constant. Under these assumptions, the uniform-in-time log-Sobolev inequality holds if it holds for the initial law.

In this paper, Theorem \ref{Poi3} improves upon existing results by establishing a explicit log-Sobolev inequality for time-inhomogeneous semigroup with frozen distribution and McKean-Vlasov SDEs under mild assumptions: the drift is only locally bounded and continuous in the spatial variable, and the diffusion may depend on the measure. This is different from prior works that require $C^2$ regularity on the coefficients, see for instance \cite[Theorem 4.2]{CG}. When the coefficients satisfy partially dissipative condition, both in the non-degenerate and degenerate case, the invariant probability measure for the time-homogeneous SDEs with frozen distribution also satisfies log-Sobolev inequality even if the invariant probability measure does not have explicit representation. The key point also lies in that this log-Sobolev inequality is uniform in the froze measure with bounded second moment.

\subsubsection{Exponential ergodicity for McKean-Vlasov SDEs}
In the case $b_t=b,\sigma_t=I_{d\times d}$, exponential ergodicity of a system guarantees the existence of an invariant probability measure with exponential rate of convergence measured in some metric. For the McKean-Vlasov SDE, the $L^2$-Wasserstein distance and the relative entropy are frequently used. Provided that the coefficient is uniformly dissipative, i.e.
\begin{align*}
&\langle b(x,\mu)-b(y,\tilde{\mu}),x-y\rangle \leq -\alpha|x-y|^2+\beta\W_2(\mu,\tilde{\mu})^2,\ \ x,y\in\R^d,\mu,\tilde{\mu}\in\scr P_2(\R^d),
\end{align*}
for some $\alpha>\beta>0$, exponential ergodicity holds in the $L^2$-Wasserstein distance as well as in relative entropy, see \cite{HW24,  RW, FYW1} and references therein.
In the partially dissipative case, i.e. dissipation only occurs at long distances, exponential ergodicity has been established in the $L^1$-Wasserstein distance, see \cite{EGZ,HLM,LWZ,WR2024} and references therein, using asymptotic reflection couplings. Additionally, in the symmetric case, i.e. the invariant probability measure has explicit representation,  exponential ergodicity in the $L^2$-Wasserstein distance has been proven under the condition that the drift is bounded in the distribution variable and the diffusion is non-degenerate and only depends on the distribution variable, see \cite{ZSQ}. For recent progresses on long-time behaviour of McKean-Vlasov SDEs, we refer to \cite{WSB} on exponential ergodicity in total variation distance, \cite{GM} concerning exponential ergodicity in $L^2$-Wasserstein distance and mean field entropy for kinetic McKean-Vlasov SDEs under non-convex assumption and \cite{MS} regarding non-asymptotic entropic bounds of stochastic algorithms. Also relevant is \cite[Theorem 1.2]{PM}, which proves $L^2$-Wasserstein contraction for distribution-independent SDEs. For this result, the drift is partially dissipative,  and $\sigma=\sigma_0I_{d\times d}$ for large enough $\sigma_0$. The author's method relies on constructing a new distance that is equivalent to the squared Euclidean norm and then employing a synchronous coupling argument.

%


This paper establishes exponential ergodicity in the $L^2$-Wasserstein distance and relative entropy for McKean-Vlasov SDEs with partially dissipative drifts and weak enough interaction, which may be unbounded in the distribution variable and encompass both non-degenerate and degenerate diffusions. This finding extends the scope of \cite{RW} to partially dissipative settings and generalizes \cite{Baudoin,Villani} to non-gradient-type drifts. The proof relies on uniform log-Sobolev inequalities for the frozen system, derived using Wang's Harnack inequality, the semigroup log-Sobolev inequality, and the weak Poincar\'e inequality. Most importantly, Wang's Harnack inequality allows us to skip the explicit form of the invariant probability measure.

\paragraph{Outline}
The remainder of this paper is organized as follows. In Section 2, we study the non-degenerate case and we prove the log-Sobolev inequality for semigroup $P_{s,t}^\mu$ defined in \eqref{Ptf} and $P_t^\ast$  in Theorem \ref{Poi3} under the monotonicity assumption ({\bf{A}}). In the partially dissipative setting ({\bf H}), we derive the log-Sobolev inequality for the invariant probability measure associated to the time-homogeneous SDEs with frozen distribution (Theorem \ref{logta0}). As an application, we deduce the exponential ergodicity in $L^2$-Wasserstein distance and relative entropy for McKean-Vlasov SDEs in Theorem \ref{Erg}.  In Section 3, we concentrate on the degenerate case. For partially dissipative drifts, we derive the log-Sobolev inequality for the invariant probability measure associated to the time-homogeneous SDEs with frozen distribution in Theorem \ref{logta}.
As an application, we obtain exponential ergodicity for McKean-Vlasov SDEs in $L^2$-Wasserstein distance and relative entropy  for both uniformly dissipative (Theorem \ref{cty}) and partially dissipative drifts (Theorem \ref{thm: erg deg}).   In Section 4,  we provide the well-posedness of McKean-Vlasov SDEs and SDEs with frozen distribution  under the condition  ({\bf A}) in Theorem \ref{wel}.  The approximation approaches used to prove Theorem \ref{Poi3} also have been provided.

\section{Non-degenerate case}

In what follows, we will first provide the log-Sobolev inequality for the time-marginal distribution of the decoupled SDE, given by
\begin{align}\label{E2}
	\d X_{s,t}^{\mu}=b_t(X_{s,t}^{\mu},\mu_t)\d t+\sigma_t(X_{s,t}^{\mu},\mu_t)\d W_t,\ \ t\geq s\geq 0,
\end{align}
for any $\mu_\cdot\in C([0,\infty),\scr P_2(\R^d))$, i.e. we will prove the log-Sobolev inequality for
the associated time-inhomogeneous semigroup defined by
\begin{align}\label{Ptf}
	P_{s,t}^\mu f(x)=\E f(X_{s,t}^{\mu,x}),\ \ t\geq s\geq 0,f\in\scr B_b(\R^d),
\end{align}
where $X_{s,t}^{\mu,x}$ solve \eqref{E2} with $X_{s,s}^{\mu,x}=x\in\R^d$.

\begin{enumerate}
\item[{\bf (A)}] $b:[0,\infty)\times \R^d\times \scr P_2(\R^d)\mapsto\R^d$ is bounded on bounded sets.  For any $t\geq 0$ and $\gamma\in\scr P_2(\R^d)$, $b_t(\cdot,\gamma)$ and $\sigma_t(\cdot,\gamma)$ are continuous in $\R^d$. There exist  constants $K_1,K_I>0$ such that for all $t\geq 0$, $x,y\in\R^d$ and $\gamma,\tilde\gamma\in\scr P_2(\R^d)$,
\begin{align}\label{Mon}&\nonumber2\<b_t(x,\gamma)-b_t(y,\tilde\gamma),x-y\>+\|\sigma_t(x,\gamma)-\sigma_t(y,\tilde\gamma)\|_{HS}^2\\
&\leq K_1|x-y|^2+K_I\W_2(\gamma,\tilde\gamma)^2,
\end{align}
and
$$\|\sigma_t(x,\gamma)\|_{HS}^2\leq K_1(1+|x|^2+\gamma(|\cdot|^2)).$$
\end{enumerate}

Note that in {\bf(A)}, the local Lipschitz continuity of $b$ and $\sigma$ in spatial variable is not required.  Under {\bf(A)}, \eqref{E1} is well-posed in $\scr P_2(\R^d)$ and recall that $P_t^\ast\gamma$ is the distribution of the solution to  \eqref{E1} with initial distribution $\gamma\in\scr P_2(\R^d)$. Moreover,  for any $\mu_\cdot\in C([0,\infty),\scr P_2(\R^d))$, \eqref{E2} has a unique solution $X_{s,t}^{\mu,x}$ with initial value $x\in\R^d$ and associated time-inhomogenous semigroup $P_{s,t}^\mu$.  More details please refer to Theorem \ref{wel} in appendix.


\subsection{Log-Sobolev inequality for $P_{s,t}^\mu$ and $P_t^\ast\mu_0$}
In this subsection, we will establish a log-Sobolev inequality for the semigroup $P_{s,t}^\mu$ defined in \eqref{Ptf}. When $b,\sigma$ are distribution free, a related result was previously obtained in \cite[Theorem 4.2]{CG} under condition \eqref{Mon} and suitable $C^2$-regularity on the coefficients, using classical stochastic methods. Here, we derive this inequality under slightly different assumptions by employing the $\Gamma$-calculus developed in \cite{BE}, together with an approximation argument.

 To this end, for any $\mu_\cdot\in C([0,\infty),\scr P_2(\R^d))$, we define
$$b_t^{\mu}(x)=b_t(x,\mu_t), \ \ \sigma_t^{\mu}(x)=\sigma_t(x,\mu_t).$$
Let $C_b^\infty(\R^d):=\{f\in C^\infty(\R^d): \|\nabla^i f\|_\infty<\infty,i\geq 1\}$,
 and $C_0^\infty(\R^d)$ denote the set of all smooth functions on $\R^d$ with compact support. The associated diffusion operator is then given by
\begin{align}\label{ger}
	\scr L_t^\mu f=\frac{1}{2}\mathrm{tr}(\sigma_t^\mu(\sigma_t^\mu)^\ast\nabla ^2f)+\nabla_{b_t^\mu} f,\ \ f\in C_b^\infty(\R^d),
\end{align}
which gives rise to the $\Gamma$-operators
\begin{align*}
	\Gamma^{t,\mu}_1(f,g)=&\scr L_t^\mu(fg)-g\scr L_t^\mu f-f\scr L_t^\mu g=\<(\sigma_t^\mu)^\ast\nabla f,(\sigma_t^\mu)^\ast\nabla g\>,\\
\Gamma^{t,\mu}_2(f,g)=&\scr L_t^\mu(\Gamma_1^{t,\mu}(f,g))-\Gamma_1^{t,\mu}(\scr L_t^\mu f,g)-\Gamma_1^{t,\mu}(\scr L_t^\mu g,f).
\end{align*}
For simplicity, we denote $\Gamma^{t,\mu}_i(f)=\Gamma^{t,\mu}_i(f,f), i=1,2$.





\begin{thm}\label{Poi3} Assume ${\bf (A)}$ and suppose that $\sigma_t(x,\gamma)=\sigma_t(\gamma)$ satisfies for some constant $\delta_1>0$,
\begin{align}\label{ups}
\sigma_t(\gamma)\sigma_t(\gamma)^\ast\leq \delta_1 I_{d\times d},\ \ \gamma\in\scr P_2(\R^d), t\geq 0.
\end{align}
Then, $P_{s,t}^\mu$ satisfies the log-Sobolev inequality: for any  $\mu_\cdot\in C([0,\infty),\scr P_2(\R^d))$ and $ f\in C_b^\infty(\R^d)$, $f>0$,
\begin{align}\label{til} P_{s,t}^\mu(f\log f)-P_{s,t}^\mu f\log P_{s,t}^\mu f \leq \frac{2\delta_1 }{K_1}(\e^{K_1(t-s)}-1)P_{s,t}^\mu\big( |\nabla   f^{\frac{1}{2}}|^2 \big),\ \ 0\leq s\leq t.
\end{align}
If moreover $\gamma\in\scr P_2(\R^d)$ satisfies the log-Sobolev inequality: for some $L_0\geq 0$,
\begin{align}\label{ilo}\gamma(f\log f)\leq \gamma(f)\log \gamma(f)+2L_0\gamma\big( |\nabla   f^{\frac{1}{2}}|^2 \big),\ \  f\in C_b^\infty(\R^d), f>0,
\end{align}
then $P_t^\ast\gamma$ satisfies the log-Sobolev inequality:
\begin{equation}\label{rlo}
\begin{split}
(P_t^\ast\gamma)&(f\log f)-(P_t^\ast\gamma)(f)\log (P_t^\ast\gamma)(f)\\
 &\leq 2\Big(L_0\e^{K_1t}+ \frac{\delta_1}{K_1}(\e^{K_1t}-1)\Big) (P_{t}^{\ast}\gamma)\big( |\nabla   f^{\frac{1}{2}}|^2 \big),\ \ f\in C_b^\infty(\R^d), f>0,t\geq 0.
\end{split}
\end{equation}
\end{thm}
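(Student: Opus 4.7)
The plan is to combine Bakry--\'Emery-style differentiation of the relative entropy along the time-inhomogeneous semigroup $P_{s,t}^\mu$ with an $L^1$-gradient estimate derived from the pathwise derivative flow of \eqref{E2}, and then to close the regularity gap via the approximation argument whose details are deferred to Section~4.

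Working first under the provisional assumption that $b_t(\cdot,\gamma)$ and $\sigma_t(\gamma)$ are smooth (without any change in the constants in \eqref{Mon}), my proof of \eqref{til} proceeds in three steps. Step one: because $\sigma$ is independent of the spatial variable, applying \eqref{Mon} with $\tilde\gamma=\gamma=\mu_t$ forces the symmetric part of the Jacobian $\nabla b_t^\mu$ to be at most $\ff{K_1}{2}I$, so the matrix flow $J_r:=\nabla_x X_{s,r}^{\mu,x}$ obeys the pathwise ODE $\ff{d}{dr}J_r=(\nabla b_r^\mu)(X_{s,r}^{\mu,x})J_r$ with $J_s=I$, giving $\|J_r\|\le e^{K_1(r-s)/2}$ and hence
\[
|\nabla P_{s,t}^\mu f|\le e^{K_1(t-s)/2}\,P_{s,t}^\mu(|\nabla f|).
\]
Step two: writing $g_r:=P_{r,t}^\mu f$ and $\Phi(r):=P_{s,r}^\mu(g_r\log g_r)$, the backward equation $\pp_r g_r=-\scr L_r^\mu g_r$ together with the chain rule $\scr L_r^\mu(\psi(g))=\psi'(g)\scr L_r^\mu g+\ff{1}{2}\psi''(g)\Gamma_1^{r,\mu}(g)$ applied to $\psi(x)=x\log x$ yields the de~Bruijn identity
\[
\Phi'(r)=\ff{1}{2}P_{s,r}^\mu\!\left(\ff{\Gamma_1^{r,\mu}(g_r)}{g_r}\right),
\]
and $\Phi(t)-\Phi(s)$ is precisely the left-hand side of \eqref{til}. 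Step three: \eqref{ups} gives $\Gamma_1^{r,\mu}(g_r)\le\delta_1|\nabla g_r|^2$, the gradient estimate gives $|\nabla g_r|^2\le e^{K_1(t-r)}(P_{r,t}^\mu|\nabla f|)^2$, and the pointwise Cauchy--Schwarz inequality $(P_{r,t}^\mu|\nabla f|)^2\le P_{r,t}^\mu(|\nabla f|^2/f)\cdot P_{r,t}^\mu(f)$ combine to yield
\[
\ff{\Gamma_1^{r,\mu}(g_r)}{g_r}\le 4\delta_1 e^{K_1(t-r)}P_{r,t}^\mu(|\nabla f^{1/2}|^2);
\]
integrating from $s$ to $t$ and using $P_{s,r}^\mu P_{r,t}^\mu=P_{s,t}^\mu$ then delivers \eqref{til}.

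For \eqref{rlo}, I would specialize \eqref{til} to $s=0$ with $\mu_r=P_r^*\gamma$ and integrate against $\gamma$, using $(P_t^*\gamma)(h)=\gamma(P_{0,t}^\mu h)$ to obtain
\[
(P_t^*\gamma)(f\log f)-\gamma\bigl((P_{0,t}^\mu f)\log P_{0,t}^\mu f\bigr)\le \ff{2\delta_1}{K_1}\bigl(e^{K_1 t}-1\bigr)(P_t^*\gamma)(|\nabla f^{1/2}|^2).
\]
The remaining piece is exactly the $\gamma$-entropy of $P_{0,t}^\mu f$, which by the hypothesis \eqref{ilo} is at most $2L_0\gamma(|\nabla(P_{0,t}^\mu f)^{1/2}|^2)$. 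Repeating the same gradient-estimate-plus-Cauchy--Schwarz step once more gives $|\nabla(P_{0,t}^\mu f)^{1/2}|^2\le e^{K_1 t}P_{0,t}^\mu(|\nabla f^{1/2}|^2)$, so this contribution is bounded by $2L_0 e^{K_1 t}(P_t^*\gamma)(|\nabla f^{1/2}|^2)$; adding the two bounds reconstructs the constant $L_0 e^{K_1 t}+\ff{\delta_1}{K_1}(e^{K_1 t}-1)$ in \eqref{rlo}.

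The hard part is that both the derivative-flow identity and the $\Gamma$-calculus chain rule genuinely require $C^1$/$C^2$ regularity of $b$ and $\sigma$, while {\bf (A)} only asks for local boundedness and spatial continuity. The remedy is a mollification scheme producing smooth approximations $b_t^n,\sigma_t^n$ satisfying \eqref{Mon}--\eqref{ups} with the same constants $K_1,\delta_1$; one applies the three-step argument to the smooth semigroups $P_{s,t}^{\mu,n}$ and then passes to the limit using the well-posedness of \eqref{E1}--\eqref{E2} (Theorem~\ref{wel}), weak convergence of the SDE laws, continuity of the integrands in \eqref{til}--\eqref{rlo}, and lower semi-continuity of the entropy and of the carr\'e du champ. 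This passage to the limit is exactly the approximation procedure announced for Section~4.
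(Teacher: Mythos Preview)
Your argument is essentially the paper's own proof: the $L^1$-gradient bound from the one-sided Lipschitz condition, the de~Bruijn identity via the backward equation and the diffusion chain rule, the Cauchy--Schwarz step $(P_{r,t}^\mu|\nabla f|)^2\le 4\,P_{r,t}^\mu(|\nabla f^{1/2}|^2)\,P_{r,t}^\mu f$, and the integration against $\gamma$ combined with \eqref{ilo} for \eqref{rlo} all match the paper line by line. The only point worth flagging is that the paper's Section~4 actually carries out a \emph{two-stage} approximation---first a Yosida approximation to pass from continuous-plus-monotone to globally Lipschitz, and only then a mollification to reach the smooth regime---because mollifying a merely continuous drift does not by itself yield the strong convergence of the approximate solutions needed to pass to the limit in \eqref{til}.
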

\begin{rem}
 In \cite[Proposition 1.1]{MRW}, the authors derive the log-Sobolev inequality for the semigroup $P_{s,t}$ associated to
$$\d X_t=b_t(X_t)\d t+\d W_t$$
under the assumption
$$\<b_t(x)-b_t(y),x-y\>\leq L|x-y|^2.$$
Then, if the distribution $\L_{X_0}$ of the initial condition $X_0$ satisfies a log-Sobolev inequality,  the distribution $\L_{X_t}$ at any $t>0$, also satisfies a  log-Sobolev inequality (with a time-dependent constant).
\end{rem}

\begin{rem} If $\sigma=I_{d\times d}$ and $b$ is of gradient-type, then the
$L^1$-gradient estimate \eqref{cdy}, the log-Sobolev inequality for $P_{s,t}^\mu$ and the Bakry--\'Emery curvature condition
$$\Gamma^{t,\mu}_2(f)\geq \rho\Gamma_1^{t,\mu}(f)=\rho|\nabla f|^2$$
for some constant $\rho\in\R$ are equivalent,  see e.g. \cite[Proposition 3.12]{M}.
\end{rem}

\begin{rem}
One can also refer to \cite[Theorem 4.1]{CM} for the log-Sobolev inequality of time-inhomogeneous semigroup and \cite[Lemma 5.2]{RSW} for the reflecting case under the Bakry-\'{E}mery curvature condition. Moreover, \eqref{cdy} is equivalent to \cite[(10)]{CM} if we assume $\sigma\sigma^\ast\geq \delta_2$ for some $\delta_2>0$.
\end{rem}

\begin{proof} For any  $\mu_\cdot\in C([0,\infty),\scr P_2(\R^d))$, recall that $X_{s,t}^{\mu,x}$ solves \eqref{E2} with initial value $x\in\R^d$ and $P_{s,t}^\mu$ is defined in \eqref{Ptf}.
By \eqref{Mon} and $\sigma_t(x,\gamma)=\sigma_t(\gamma)$, we have
\begin{align}\label{decmo}
2\<b_t(x,\mu_t)-b_t(y,\mu_t),x-y\>\leq K_1|x-y|^2,\ \ t\geq 0, x,y\in\R^d.
\end{align}
It is easy to derive from \eqref{decmo} and $\sigma_t(x,\gamma)=\sigma_t(\gamma)$ that
$$|X_{s,t}^{\mu,x}-X_{s,t}^{\mu,y}|\leq \e^{\frac{1}{2}K_1(t-s)}|x-y|,\ \ x,y\in\R^d,0\leq s\leq t,$$
which implies that for any  $x\in\R^d,0\leq s\leq t, f\in C_b^\infty(\R^d)$,
\begin{align}\label{cdy} |\nabla P_{s,t}^\mu f|(x)&:=\limsup_{y\to x}\frac{|P_{s,t}^\mu f(y)-P_{s,t}^\mu f(x)|}{|y-x|} \leq \e^{\frac{1}{2}K_1(t-s)}P_{s,t}^\mu |\nabla f|(x).
\end{align}
Next, we divide into three steps to complete the proof.

(i) We first assume {\bf(A)}, \eqref{ups} and that for any $i\geq 1$,  there exists a locally bounded function $\ell^{i}:[0,\infty)\to[0,\infty)$ such that for any  $\mu_\cdot\in C([0,\infty),\scr P_2(\R^d))$,
\begin{align}\label{nabti}
\|\nabla^ib_t(\cdot,\mu_t)\|_\infty\leq \ell^{i}_{t}, \ \ t\geq 0, i\geq 1.
\end{align}
\eqref{nabti} and $\sigma_t(x,\gamma)=\sigma_t(\gamma)$ imply that $P_{r,t}^\mu  C_b^\infty(\R^d)\subset C_b^\infty(\R^d)$.
By It\^{o}'s formula, we conclude that
\begin{align}\label{kty}
P_{r,t}^\mu f-P_{r,s}^\mu f=\int_s^t P_{r,\theta}^\mu \scr L_\theta^\mu f\d \theta,\ \ f\in C_b^\infty(\R^d),t\geq s\geq r,
\end{align}
which together with the semigroup property implies that for $\varepsilon\geq 0$ and any $f\in C_b^\infty(\R^d)$,
\begin{align}\label{mty}
P_{r+\vv,t}^\mu f-P_{r,t}^\mu f&=P_{r+\vv,t}^\mu f-P_{r,r+\vv}^\mu P_{r+\vv,t}^\mu f=-\int_r^{r+\vv}P_{r,\theta}^\mu\scr L_\theta^\mu P_{r+\vv,t}^\mu f\d \theta.
\end{align}
By \eqref{kty}, \eqref{mty}, \eqref{nabti} and  $\sup_{t\in[0,T]}\|\sigma_t(\mu_t)\|<\infty$ for any $T>0$, we may use the dominated convergence theorem to derive that for any $f\in C_b^\infty(\R^d)$,
\begin{align*}
&-\int_r^{r+\vv}P_{r,\theta}^\mu\scr L_\theta^\mu P_{r+\vv,t}^\mu f\d \theta\\
&=-\int_r^{r+\vv}\scr L_\theta^\mu P_{\theta,t}^\mu f\d \theta+\int_r^{r+\vv}(P_{r,r}^\mu\scr L_\theta^\mu P_{\theta,t}^\mu f-P_{r,\theta}^\mu\scr L_\theta^\mu P_{r+\vv,t}^\mu f)\d \theta\\
&=-\int_r^{r+\vv}\scr L_\theta^\mu P_{\theta,t}^\mu f\d \theta +o(\vv),\ \ f\in C_b^\infty(\R^d).
\end{align*}
This together with \eqref{mty} implies the Kolmogorov backward equation, i.e.
\begin{align}\label{backw}
\frac{\d P_{r,t}^\mu f}{\d r}=-\scr L_r^\mu P_{r,t}^\mu f,\ \ f\in C_b^\infty(\R^d),\ \ a.s. \ \ r\in[s,t].
\end{align}
Since $\scr L_t^\mu$ given in \eqref{ger} is a diffusion operator (see e.g. \cite[Definition 1.11.1]{BGL}),  we have
\begin{align}\label{Lrf}\scr L_r^\mu \big(g\log g\big)
&=\frac{1}{2}g^{-1}\Gamma_1^{r,\mu}(g)+(\log g+1)\scr L_r^\mu g,\ \ g\in C_b^\infty(\R^d), g>0.
\end{align}
By It\^{o}'s formula, \eqref{backw} and \eqref{Lrf}, we derive that for $f\in C_b^\infty(\R^d)$, $f>0$,
\begin{align*}&((P_{s',t}^\mu f)\log(P_{s',t}^\mu f))(X_{s,s'}^{\mu,x})-P_{s,t}^\mu f(x)\log P_{s,t}^\mu f(x)\\
&=\int_s^{s'}\left(-(\log (P_{r,t}^\mu f)+1)\scr L_r^\mu P_{r,t}^\mu f+\scr L_r^\mu ((P_{r,t}^\mu f)\log(P_{r,t}^\mu f))\right)(X_{s,r}^{\mu,x})\d r+ M_{s'}\\
&=\int_s^{s'}\frac{1}{2}\Big((P_{r,t}^\mu f)^{-1}\big(\Gamma_1^{r,\mu}(P_{r,t}^\mu f)\big)\Big)(X_{s,r}^{\mu,x})\d r+M_{s'}, \ \ s'\in[s,t]
\end{align*}
for some martingale $\{M_{s'}\}_{s'\in[s,t]}$.
Combining this for $s'=t$ and taking expectation, we deduce from \eqref{ups} and \eqref{cdy} that
\begin{equation*}
 \begin{split}&P_{s,t}^\mu(f\log f)(x)-P_{s,t}^\mu f(x)\log P_{s,t}^\mu f(x)\\
 & = \frac{1}{2}\int_s^t\E\big((P_{r,t}^\mu f)^{-1}(\Gamma_1^{r,\mu}(P_{r,t}^\mu f))\big)(X_{s,r}^{\mu,x})\d r\\
&\le\frac{1}{2}\delta_1\int_s^t\E\big((P_{r,t}^\mu f)^{-1}|\nabla P_{r,t}^\mu f|^2\big)(X_{s,r}^{\mu,x})\d r\\
&\le \frac{1}{2}\delta_1\int_s^t\E\big((P_{r,t}^\mu f)^{-1}\e^{K_1(t-r)}(P_{r,t}^\mu |\nabla f|)^2\big)(X_{s,r}^{\mu,x})\d r,\ \ f\in C_b^\infty(\R^d),\ \ f>0.
\end{split}
\end{equation*}
This, combining with the following fact that
\begin{align}\label{csi}
(P_{r,t}^\mu |\nabla f|)^2=4\big(P_{r,t}^\mu \big( |\nabla f^{\frac{1}{2}}| f^{\frac{1}{2}}\big)\big)^2\leq 4 P_{r,t}^\mu  |\nabla f^{\frac{1}{2}}|^2 P_{r,t}^\mu f,
\end{align}
and the semigroup property $P_{s,r}^\mu P_{r,t}^\mu=P_{s,t}^\mu$,  yields \eqref{til}.

Simply denote $P_{t}^{\mu}:={P}_{0,t}^{\mu}$.
Integrating with respect to $\gamma$ in \eqref{til} with $s=0$,
we derive for $0<f\in C_b^\infty(\R^d)$,
\begin{align}\label{0mi} \gamma(P_t^\mu(f\log f))-\gamma(P_{t}^{\mu}f\log P_{t}^{\mu}f) \leq 2\delta_1\frac{\e^{K_1 t}-1}{K_1}\gamma\left(P_t^\mu\left(|\nabla   f^{\frac{1}{2}}|^2\right)\right).
\end{align}
By \eqref{ilo}, \eqref{cdy}, $P_{r,t}^\mu  C_b^\infty(\R^d)\subset C_b^\infty(\R^d)$ and \eqref{csi} for $r=0$, we conclude  for $0<f\in C_b^\infty(\R^d)$,
\begin{align*}\gamma(P_{t}^{\mu}f\log P_{t}^{\mu}f)&\leq \gamma(P_{t}^{\mu}f)\log \gamma(P_{t}^{\mu}f)+2L_0\gamma\big( |\nabla (P_{t}^{\mu}f)^{\frac{1}{2}}|^2 \big)\\
&\leq \gamma(P_{t}^{\mu}f)\log \gamma(P_{t}^{\mu}f)+2L_0\e^{K_1t}\gamma\left(P_t^\mu\left(|\nabla   f^{\frac{1}{2}}|^2\right)\right).
\end{align*}
This, together with \eqref{0mi}, implies
\begin{equation}\label{rlo13}
\begin{split}
&\gamma(P_t^\mu(f\log f))-\gamma(P_{t}^{\mu}f)\log \gamma(P_{t}^{\mu}f)\\
 &\leq 2\Big(L_0\e^{K_1t}+ \frac{\delta_1}{K_1}(\e^{K_1t}-1)\Big) \gamma\left(P_t^\mu\left(|\nabla   f^{\frac{1}{2}}|^2\right)\right),\ \ 0<f\in C_b^\infty(\R^d),t\geq 0.
\end{split}
\end{equation}

(ii) Assume {\bf(A)}, \eqref{ups} and that there exists a locally bounded function $L:[0,\infty)\to[0,\infty)$ such that for any $\mu_\cdot\in C([0,\infty),\scr P_2(\R^d))$,
\begin{align*}
|b_t(x,\mu_t)-b_t(\bar{x},\mu_t)|\leq L_t|x-\bar{x}|,\ \ t\geq 0,x,\bar{x}\in \R^d.
\end{align*}
Let $\hat{b}_t^\mu(x)=b_t(x,\mu_t)$ and $\hat{\sigma}_t^\mu=\sigma_t(\mu_t)$ and for any $m\geq 1$, let $\hat{b}^{m,\mu}$ be defined in the same way as $\hat{b}^m$ in \eqref{bmt} by replacing $\hat{b}$ with $\hat{b}^\mu$. Let $P_{s,t}^{m,\mu}$ be the associated semigroup to
$$\d \hat{X}_{s,t}^{m,\mu}=\hat{b}^{m,\mu}_t(\hat{X}_{s,t}^{m,\mu})\d t+\hat{\sigma}_{t}^\mu\d W_t, \ \ t\geq s\geq 0.$$
For $\vv>0$ and $f\in C_b^\infty(\R^d)$ with $f>0$, let $f_\vv=f+\vv$. By \eqref{ktl13}, \eqref{decmo} and \eqref{bmm} for $\hat{b}^{m,\mu}$ replacing $\hat{b}^{m}$ and step (i), \eqref{til} and \eqref{rlo13} hold for $(P_{s,t}^{m,\mu}, f_\vv)$ replacing $(P_{s,t}^{\mu},f)$. Letting $m\to\infty$, we get \eqref{til} and \eqref{rlo13} for $f_\vv$ replacing $f$ by the fact $\inf_{x>0}(x\log x)=-\e^{-1}$, Fatou's lemma, \eqref{supmh} for $\hat{X}^{m,\mu}$ replacing $\hat{X}^{m}$, the dominated convergence theorem and Lemma \ref{aky} below for $(\hat{b}^\mu,\hat{b}^{m,\mu})$ replacing $(\hat{b},\hat{b}^{m})$. Letting $\vv\to0$, \eqref{til} and \eqref{rlo13} hold by  Fatou's lemma and the monotone convergence theorem.

(iii) Finally, we assume {\bf (A)} and \eqref{ups}. Let $\hat{b}_t^\mu(x)=b_t(x,\mu_t)$ and $\hat{\sigma}_t^\mu=\sigma_t(\mu_t)$ and for any $n\geq 1$, let $\hat{b}^{(n),\mu}$ be defined in the same way as $\hat{b}^{(n)}$ in \eqref{bhn} by replacing $\hat{b}$ with $\hat{b}^\mu$.
Let $P_{s,t}^{(n),\mu}$ be the associated semigroup to
$$\d \hat{X}_{s,t}^{(n),\mu}=\hat{b}^{(n),\mu}_t(\hat{X}_{s,t}^{(n),\mu})\d t+\hat{\sigma}_t^\mu\d W_t,\ \ t\geq s\geq 0.$$
By \eqref{bnl} and \eqref{jg4dv7} for $\hat{b}^{(n),\mu}$ replacing $\hat{b}^{(n)}$ and step (ii), \eqref{til} and \eqref{rlo13} hold for $(P_{s,t}^{(n),\mu},f_\vv)$ replacing $(P_{s,t}^{\mu},f)$. Repeating the same argument in step (ii), we obtain \eqref{til} and \eqref{rlo13} by Lemma \ref{yap} in appendix for $(\hat{b}^\mu,\hat{b}^{(n),\mu})$ replacing $(\hat{b},\hat{b}^{(n)})$.
Taking $\mu_t=P_t^\ast\gamma, t\geq 0$ in \eqref{rlo13} and noting that $$(P_t^\ast\gamma)(f)=\gamma(P_t^\mu(f)), \ \ f\geq 0$$ and $\inf_{x>0}(x\log x)=-\e^{-1}$,
we derive \eqref{rlo}. The proof is completed.
\end{proof}
\vspace{-1em}

\begin{rem}
In \cite{CT}, the authors derive functional inequalities for the heat flow on Riemannian manifolds with time-dependent metrics. More precisely, they consider the operator $L_t=\Delta_t+Z_t$ for some vector field $Z_t$ on a complete Remannian manifold $M$ with time--dependent Riemannian metric $g_t$. They use the following curvature condition:
\begin{align}\label{CGC}\nonumber\mathfrak{R}_t^Z(X,X)&:=\mathrm{Ric}_t(X,X)-\<\nabla_X^t Z_t,X\>-\frac{1}{2}\partial_t g_t(X,X)\geq k(t),\\
	&\ \  X\in T_{x}M \text{ such that }g_t(X,X)=1, x\in M, k\in C([0,\infty)\times M).
\end{align}

When $\sigma$ depends on the spatial variable,  \eqref{cdy} does not hold.  Instead, if in addition $\sigma_t^\mu(\sigma_t^\mu)^\ast$ is invertible, one may  define a Riemannian metric as
\begin{align*}
g_t(X,Y)=\<(\sigma_t^\mu(\sigma_t^\mu)^\ast)^{-1} X,Y\>.
\end{align*}
Then the functional inequalities for $P_{s,t}^{\mu}$ follow by using the conclusions in \cite{CT} provided that \eqref{CGC} holds. However, as explained in \cite[p.1451]{W2011}, the assumption \eqref{CGC} involves the term $\nabla^2\big((\sigma_t^\mu(\sigma_t^\mu)^\ast)^{-1}\big)$, which is more or less implicit.  This is the reason why we
only consider the case $\sigma_t(x,\gamma)=\sigma_t(\gamma)$.
\end{rem}



\subsection{Time-homogeneous decoupled SDEs}

In the following, we consider the time-homogeneous version of \eqref{E1} given by
	\begin{equation}\label{EKy}
		\d X_t=b(X_t,\L_{X_t})\d t+\sigma(X_t,\L_{X_t})\d W_t.
	\end{equation}

 We make use of the following partially dissipative condition.
\begin{enumerate}
\item[{\bf(H)}] $b\colon \R^d\times \scr P_2(\R^d)\to\R^d$ is bounded on bounded sets. For any $\mu\in\scr P_2(\R^d)$, $b(\cdot,\mu)$ and $\sigma(\cdot,\mu)$ are continuous in $\R^d$. There exist constants $K_1>0, K_2>0, K_I>0, r_0>0$ and $\delta_1\geq\delta_2>0$ such that for any $x,y\in\R^d, \gamma,\tilde{\gamma}\in\scr P_2(\R^d)$,
\begin{align}\label{Mon1}\nonumber&2\<b(x,\gamma)-b(y,\tilde{\gamma}),x-y\>+\|\sigma(x,\gamma)-\sigma(y,\tilde{\gamma})\|_{HS}^2\\
&\leq K_1|x-y|^21_{\{|x-y|\leq r_0\}}-K_2|x-y|^21_{\{|x-y|> r_0\}}+K_I\W_2(\gamma,\tilde{\gamma})^2,
\end{align}
and
\begin{align}\label{sli}\delta_2I_{d\times d}\leq \sigma\sigma^\ast\leq  \delta_1 I_{d\times d}.
\end{align}

\end{enumerate}

In this paper, we aim to prove
exponential ergodicity of  the time-homogeneous version \eqref{EKy} in $\W_2$ and relative entropy under the partially dissipative condition {\bf(H)}.

To this end, we first prove a log-Sobolev inequality for the invariant probability measure $\Phi(\mu)$ of the time-homogeneous decoupled SDEs, i.e. for any $\mu\in\scr P_2(\R^d)$ we consider
	\begin{align}\label{ace}\d \tilde{X}_t^\mu=b(\tilde{X}_t^\mu,\mu)\d t+\sigma(\tilde{X}_t^\mu,\mu)\d W_t.
	\end{align}
We should remark that the log-Sobolev inequality for $\Phi(\mu)$ is uniform in $\mu$ with bounded second moments.
	We denote by $(\tilde{P}_t^\mu)^\ast\nu$ the distribution of the solution to \eqref{ace} with initial distribution $\nu\in\scr P_2(\R^d)$ and
	associated time-homogeneous semigroup
	$$\tilde{P}_t^\mu f(x)=\int_{\R^d}f\d \big((\tilde{P}_t^\mu)^\ast\delta_x\big),\ \ f\in\scr B_b(\R^d),x\in\R^d,t\geq 0.$$

\subsubsection{Invariant probability measure}
\begin{thm}\label{thm: IPM}
Assume {\bf (H)}. Then the following assertions hold.
\begin{enumerate}
\item[$(1)$] For any $\mu\in\scr P_2(\R^{d})$, \eqref{ace} has a unique invariant probability measure (IPM) denoted by $\Phi(\mu)$. Moreover, if $K_I<K_2$, there exists an $M>0$ such that
$\Phi$ is a map on $\hat{\scr P}_{2,M}(\R^{d})$, where
$$\hat{\scr P}_{2,M}(\R^{d}):=\left\{\mu\in\scr P_2(\R^{d}): \mu(|\cdot|^2)\leq M 
\right\}.$$
\item[$(2)$]
There exist constants $\vv>0$ and $C_1>0$ such that
\begin{align}\label{emo}
(\Phi(\mu))(\e^{\vv|\cdot|^2})\le C_1, \ \ \mu\in\hat{\scr P}_{2,M}(\R^d).
\end{align}
\end{enumerate}
\end{thm}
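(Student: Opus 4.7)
The plan is to apply It\^o's formula successively to $|\tilde X_t^\mu|^2$ and then to $\e^{\varepsilon|\tilde X_t^\mu|^2}$, working throughout with the one-point version of (\textbf{H}) obtained by setting $y=0$ and $\tilde\gamma=\delta_0$. Combining the bound $K_1|x|^2\mathbf 1_{\{|x|\leq r_0\}}-K_2|x|^2\mathbf 1_{\{|x|>r_0\}}\leq (K_1+K_2)r_0^2-K_2|x|^2$, the uniform trace estimate $\|\sigma\|_{HS}^2\leq\delta_1 d$ from \eqref{sli}, and a Young inequality to absorb $2\langle b(0,\delta_0),x\rangle$ into $\eta|x|^2$, one obtains for every $\eta\in(0,K_2)$ a constant $C_0=C_0(\eta)>0$ such that $2\langle b(x,\mu),x\rangle+\|\sigma(x,\mu)\|_{HS}^2\leq C_0-(K_2-\eta)|x|^2+K_I\mu(|\cdot|^2)$ for all $x\in\R^d$ and $\mu\in\scr P_2(\R^d)$. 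This single Lyapunov-type inequality drives both parts of the theorem.

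\textbf{Part (1).} It\^o's formula on $|\tilde X_t^\mu|^2$ together with the above bound gives $\tfrac{\d}{\d t}\E|\tilde X_t^\mu|^2\leq C_0+K_I\mu(|\cdot|^2)-(K_2-\eta)\E|\tilde X_t^\mu|^2$, whence Gr\"onwall produces a uniform-in-$t$ second-moment estimate. Together with well-posedness (Theorem \ref{wel}), this tightness feeds a Krylov--Bogolyubov argument to yield existence of an IPM, while uniqueness is supplied by the exponential $\W_1$-contractivity for the frozen-distribution SDE obtained via asymptotic reflection coupling \cite{EGZ} under partial dissipativity and non-degeneracy \eqref{sli}. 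Passing $t\to\infty$ in the moment estimate gives $\Phi(\mu)(|\cdot|^2)\leq \tfrac{C_0+K_I\mu(|\cdot|^2)}{K_2-\eta}$; when $K_I<K_2$, choose $\eta\in(0,K_2-K_I)$ and $M\geq C_0/(K_2-\eta-K_I)$, so that $\tfrac{C_0+K_I M}{K_2-\eta}\leq M$ and $\Phi$ maps $\hat{\scr P}_{2,M}(\R^d)$ into itself.

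\textbf{Part (2).} It\^o's formula applied to $\e^{\varepsilon|\tilde X_t^\mu|^2}$ yields a drift proportional to $\e^{\varepsilon|x|^2}\bigl(2\langle b(x,\mu),x\rangle+\|\sigma(x,\mu)\|_{HS}^2+2\varepsilon|\sigma(x,\mu)^{*}x|^2\bigr)$. Using $|\sigma^{*}x|^2\leq\delta_1|x|^2$ from \eqref{sli} together with the Lyapunov bound and $\mu\in\hat{\scr P}_{2,M}(\R^d)$, this is dominated by $\varepsilon\e^{\varepsilon|x|^2}\bigl(C_0+K_I M-(K_2-\eta-2\varepsilon\delta_1)|x|^2\bigr)$. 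Choosing $\varepsilon>0$ small enough that $\lambda:=K_2-\eta-2\varepsilon\delta_1>0$, the quadratic correction dominates outside a bounded set and yields a Foster--Lyapunov inequality $\tfrac{\d}{\d t}\E\e^{\varepsilon|\tilde X_t^\mu|^2}\leq C'-c'\E\e^{\varepsilon|\tilde X_t^\mu|^2}$ with constants $C',c'>0$ uniform over $\mu\in\hat{\scr P}_{2,M}(\R^d)$. Gr\"onwall in $t$ and invariance of $\Phi(\mu)$ then transfer the bound to $\Phi(\mu)(\e^{\varepsilon|\cdot|^2})\leq C_1:=C'/c'$, giving \eqref{emo}.

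\textbf{Main obstacle.} The most delicate point is uniqueness of $\Phi(\mu)$ under (\textbf{H}), where $b$ is only locally bounded and continuous in the spatial variable, so that classical Lipschitz-based contraction arguments are unavailable. My plan is to invoke the reflection coupling theory for partially dissipative non-degenerate SDEs developed in \cite{EGZ} and the references cited after (\textbf{H}), which is tailored exactly to this regime. A secondary, standard technicality is that the It\^o computations on the unbounded test function $\e^{\varepsilon|\cdot|^2}$ must be justified by localization via stopping times $\tau_n=\inf\{t:|\tilde X_t^\mu|\geq n\}$ followed by monotone convergence as $n\to\infty$.
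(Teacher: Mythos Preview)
Your Lyapunov computations for existence, for the invariance of $\hat{\scr P}_{2,M}(\R^d)$, and for the Gaussian tail bound in Part~(2) are essentially the same as the paper's, and they go through without difficulty.

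The genuine gap is in your uniqueness argument. You propose to obtain uniqueness of $\Phi(\mu)$ from $\W_1$-contractivity via asymptotic reflection coupling as in \cite{EGZ}. But the reflection-coupling construction requires solving a coupled SDE for $(\tilde X_t^\mu,\bar X_t^\mu)$ with drift built from $b(\cdot,\mu)$ and a reflected noise; for this coupled system to be well-posed one needs at least local Lipschitz continuity of $b(\cdot,\mu)$ (and of $\sigma(\cdot,\mu)$ in the multiplicative case). Assumption {\bf(H)} only gives continuity and one-sided monotonicity of $b(\cdot,\mu)$, so \cite{EGZ} is not directly applicable --- this is precisely the content of the paper's Remark after Theorem~\ref{thm: IPM}, which notes that reflection coupling works \emph{if} $b$ is locally Lipschitz, whereas Theorem~\ref{thm: IPM} deliberately avoids that hypothesis.

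The paper closes this gap by a different route: under {\bf(H)} (monotonicity plus uniform ellipticity \eqref{sli}), Wang's Harnack inequality \cite[Theorem~1.1(2)]{W2011} holds for $\tilde P_t^\mu$, namely $(\tilde P_t^\mu|f|)^p(x)\le (\tilde P_t^\mu|f|^p)(y)\exp\big(C|x-y|^2/(1-\e^{-K_1 t})\big)$ for all $p$ beyond some $p_0>1$. This Harnack inequality requires only the one-sided condition \eqref{Mon1} and the ellipticity bound \eqref{sli}, not Lipschitz regularity; and by \cite[Theorem~1.4.1(3)]{Wbook} it forces any two invariant probability measures to be mutually absolutely continuous, hence equal. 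So to repair your argument under {\bf(H)} as stated, you should replace the reflection-coupling step by this Harnack-inequality argument.
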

\vspace{-2em}
\begin{rem} We give some comments on the existence and uniqueness for invariant probability measure in Theorem \ref{thm: IPM}(1). Under {\bf(H)}, when  $\sigma=\sigma(\mu)$ and $b$ is locally Lipschitz continuous in spatial variable, one may use reflection couplings as in \cite[Corollary 3, Lemma 1]{Eberle} to derive the exponential ergodicty in $L^1$-Wasserstein distance,
see also \cite[Theorem 13]{CG} for the multiplicative noise case where $\sigma$ and $b$ are assumed to be locally Lipschitz continuous in spatial variable. We should remark that in Theorem \ref{thm: IPM}(1) above, the locally Lipschitz continuity of $b$ and $\sigma$ in spatial variable is not required.
Another alternative method to derive the existence and uniqueness for invariant probability measure is to apply \cite[Theorem 5.2(c)]{DMT}, which claims that for  a continuous time Markov process with the extended generator $\tilde{L}$, if there exists $V\geq 1$ such that for some petite set $B$, some constants $c>0,b_0>0$,
$$\tilde{L}V\leq -c V+b_01_{B},$$
then it is $V$-uniformly ergodic. Observe that the existence of petite set is the critical point in \cite[Theorem 5.2(c)]{DMT} and the level set $\{x:V(x)\leq n\}$ for some $n\geq 1$ may be the candidate.
\end{rem}
\begin{proof}
(1) Taking $y={\bf0}$ in \eqref{Mon1}, we get \begin{align}\label{Mon1t}\nonumber&2\<b(x,\mu)-b({\bf0},\delta_{\bf0}),x\>+\|\sigma(x,\mu)-\sigma({\bf0},\delta_{\bf0})\|_{HS}^2\\
&\leq K_1|x|^21_{\{|x|\leq r_0\}}-K_2|x|^21_{\{|x|> r_0\}}+K_I\|\mu\|_2^2,\ \ x\in\R^d,\mu\in\scr P_2(\R^d)
\end{align}
So, it follows from \eqref{sli} and \eqref{Mon1t} that
\begin{equation}\label{Ren1}
\begin{split}
&2\<x,b(x,\mu)\>+\|\sigma(x,\mu)\|_{\rm HS}^2\\
&\le -K_2|x|^2+(K_1+K_2)r_0^2+2|x||b({\bf0},\delta_{\bf0})|+K_I\mu(|\cdot|^2)\\
&\quad+\| \sigma({\bf0},\delta_{\bf0})\|_{\rm HS}^2+2\|\sigma(x,\mu)-\sigma({\bf0},\delta_{\bf0})\|_{HS} \|\sigma({\bf0},\delta_{\bf0})\|_{\rm HS}\\
&\le -K_2|x|^2+2|x||b({\bf0},\delta_{\bf0})|+K_I\mu(|\cdot|^2)+C_0,\ \ x\in\R^d,\mu\in\scr P_2(\R^d)
\end{split}
\end{equation}
for some constant $C_0>0$.
Whence, an application of \cite[Lemma 2.7(1)]{ZSQ1}, along with \eqref{sli} and \eqref{Ren1}, yields that \eqref{ace} has an IPM.

By \cite[Theorem 1.1(2)]{W2011} and {\bf(H)}, one has
Wang's Harnack inequality, i.e. there exists $p_0>1$ such that for any $p>p_0$, one can find a constant $C>0$ such that
\begin{align}\label{WHa}(\tilde{P}_t^\mu|f|)^p\leq \tilde{P}_t^\mu|f|^p\exp\left(C\frac{|x-y|^2}{1-\e^{-K_1 t}}\right),\ \ f\in\scr B_b(\R^d), x,y\in\R^d, t>0.
\end{align}
So, \cite[Theorem 1.4.1(3)]{Wbook} implies that \eqref{ace} has a unique IPM written as $\Phi(\mu).$

Next, once $K_I<K_2$ is valid, \cite[Lemma 2.8]{ZSQ1} implies that, for some constant $M>0,$  $\hat{\scr P}_{2,M}(\R^{d})$ is an invariant set of $\Phi$ so the second assertion in (1) holds true.

(2) It\^o's formula, \eqref{sli} as well as  \eqref{Ren1} imply  that for $\vv=\frac{K_2}{4\delta_1}$ and $\mu\in\hat{\scr P}_{2,M}(\R^{d}),$
\begin{align}\label{gpy}
 \d \e^{\vv|\tilde{X}_t^\mu|^2}
\nonumber&\leq \vv\e^{\vv|\tilde{X}_t^\mu|^2}\left( -K_2|\tilde{X}_t^\mu|^2+2|\tilde{X}_t^\mu||b({\bf0},\delta_{\bf0})|+K_I\mu(|\cdot|^2)+C_0+2\vv\delta_1 |\tilde{X}_t^\mu|^2\right)\d t\\
&\quad+2\vv\e^{\vv|\tilde{X}_t^\mu|^2}\<\sigma(\tilde{X}_t^\mu,\mu)\d W_t, \tilde{X}_t^\mu\>\\
\nonumber\le &\vv\e^{\vv|\tilde{X}_t^\mu|^2}\left( -\frac{K_2}{4}|\tilde{X}_t^\mu|^2+\frac{4|b({\bf0},\delta_{\bf0})|^2}{K_2}+K_IM+C_0\right)\d t\\
\nonumber&+2\vv\e^{\vv|\tilde{X}_t^\mu|^2}\<\sigma(\tilde{X}_t^\mu,\mu)\d W_t, \tilde{X}_t^\mu\>.
\end{align}
Note that we can find constants $c_1(\vv), c_2(\vv)>0$ such that
\begin{align*} &\vv\e^{\vv|\tilde{X}_t^\mu|^2} \left(-\frac{K_2}{4}|\tilde{X}_t^\mu|^2+\frac{4|b({\bf0},\delta_{\bf0})|^2}{K_2}+K_IM+C_0\right)\leq c_1(\vv)-c_2(\vv)\e^{\vv|\tilde{X}_t^\mu|^2}.
\end{align*}
Substituting this into \eqref{gpy} and taking $\tilde{X}_0^\mu={\bf0}$ enables us to derive that for some constant $C_1>0$
\begin{align*}
 \int_0^t\E(\e^{\vv|\tilde{X}_s^\mu|^2})\,\d s\le C_1t.
\end{align*}
This obviously implies \eqref{emo} by a tightness argument.\end{proof}
\vspace{-1em}
\begin{rem} Our proof of the IPM's uniqueness improves the approach in \cite[Lemma 2.7(2)]{ZSQ1} by using Wang's Harnack inequality \cite[Theorem 1.1(2)]{W2011}. This allows us to eliminate the need for  the local Lipschitz of $b(\cdot,\mu), \sigma(\cdot,\mu) $ and the linear growth of $b(\cdot,\mu)$.
\end{rem}
\subsubsection{Wang's Harnack inequality and hyperboundedness}

To derive the hyperboundedness of $\tilde{P}_t^\mu$, a suitable Wang's Harnack inequality is needed. In \cite[Lemma 2.6]{Wang2020b}, the author established a Harnack inequality for \eqref{ace} in the case where the drift $b$ and diffusion coefficient $\sigma$ do not depend on the measure variable $\mu$. In the following theorem, we derive a uniform Wang's Harnack inequality for \eqref{ace} with respect to the measure parameter $\mu$. The proof is completely the same with that of \cite[Lemma 2.6]{Wang2020b}. Since \cite{Wang2020b} is written in Chinese, for reader's convenience, we will provide the proof.
Note that \eqref{niHar} is different form \eqref{WHa}, the factor $\e^{K_2t}-1$ in \eqref{niHar} is crucial for us to derive the hyperboundedness of $\tilde{P}_t^\mu$ below.

\begin{lem} \label{GTY} Assume {\bf (H)}. Then the following assertions hold.

(1) There exist constants $p_0>1$ depending on $\delta_1,\delta_2$, $h_1>0$ depending on $K_1, r_0$, $\delta_1,K_2$ and $h_2>0$ depending on $\delta_1,K_2$ such that Wang's Harnack inequality holds, i.e. for any $p\geq p_0$, $t>0$,$\mu\in\scr P_2(\R^d), x,y\in\R^d, f\in\scr B_b(\R^d)$,
\begin{align}\label{niHar}
(\tilde{P}_t^\mu |f|)^p(x)&\leq (\tilde{P}_t^\mu |f|^p)(y)\exp\left((p-1)\left(h_1 t+\frac{h_2K_2|x-y|^2}{\e^{K_2t}-1}\right)\right) .
\end{align}
(2) There exist constants $t_0>0, p>1, C_2>0$ such that
\begin{align*}
\|\tilde{P}_{t_0}^\mu\|_{L^p(\Phi(\mu))\to L^{2p}(\Phi(\mu))}\leq C_2, \ \ \mu\in\hat{\scr P}_{2,M}(\R^d).
\end{align*}
\end{lem}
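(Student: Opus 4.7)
The plan is to adapt the coupling-by-change-of-measure scheme of \cite{W2011, Wang2020b} to the partially dissipative setting \eqref{Mon1}, exploiting the long-range dissipativity constant $-K_2$ to produce the sharp time factor $(\e^{K_2 t}-1)^{-1}$ in \eqref{niHar}. For Part~(1), I fix $t > 0$ and $x, y \in \R^d$, let $X_s = \tilde X_s^{\mu,x}$ solve \eqref{ace}, and introduce an auxiliary process
$$\d Y_s = b(Y_s,\mu)\,\d s + \sigma(Y_s,\mu)\,\d W_s - \xi_s(Y_s - X_s)\,\d s,\qquad Y_0 = y,$$
where $\xi_s > 0$ is a deterministic rate to be selected so that $Z_s := Y_s - X_s$ is driven to zero at time $s = t$. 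Applying \eqref{Mon1} with $\gamma = \tilde\gamma = \mu$ together with the elementary bound $K_1 a^2\,1_{\{a \leq r_0\}} - K_2 a^2\,1_{\{a > r_0\}} \leq (K_1+K_2)r_0^2 - K_2 a^2$ reduces the evolution of $|Z_s|^2$ to $\d|Z_s|^2 \leq [(K_1+K_2)r_0^2 - (K_2 + 2\xi_s)|Z_s|^2]\,\d s$ modulo a martingale, so choosing $\xi_s$ of the form $\alpha \e^{K_2 s}/(\e^{K_2 t} - \e^{K_2 s})$ forces $Z_t = 0$ while leaving a residual term of order $r_0^2$ integrable over $[0,t]$.

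Girsanov's theorem then yields a measure $Q$ under which $Y_{[0,t]}$ has the law of $\tilde X_\cdot^{\mu,y}$; nondegeneracy \eqref{sli} makes $\sigma(Y_s,\mu)^{-1}\xi_s Z_s$ a well-defined adapted process with associated exponential martingale $R_t$. Since $Y_t = X_t$ by construction, $\tilde P_t^\mu|f|(y) = \E[R_t |f(X_t)|]$, and H\"older's inequality with exponent $p$ yields $(\tilde P_t^\mu|f|(y))^p \leq \tilde P_t^\mu|f|^p(x)\,(\E R_t^{p/(p-1)})^{p-1}$. The factor $\E R_t^{p/(p-1)}$ is controlled by a Novikov-type estimate: the integral $\int_0^t \xi_s^2 |Z_s|^2\,\d s$ telescopes against the exponential weight to produce precisely the contribution $h_2 K_2 |x-y|^2/(\e^{K_2 t}-1)$, while the residual $r_0^2$ term contributes $h_1 t$; exchanging $x$ and $y$ at the end delivers \eqref{niHar}, with $p_0$ determined by the nondegeneracy ratio $\delta_1/\delta_2$.

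For Part~(2), I rewrite \eqref{niHar} as
$$(\tilde P_t^\mu|f|)^p(x)\exp\!\Bigl(-(p-1)\Bigl(h_1 t + \tfrac{h_2 K_2|x-y|^2}{\e^{K_2 t}-1}\Bigr)\Bigr) \leq \tilde P_t^\mu|f|^p(y),$$
integrate in $y$ against $\Phi(\mu)$, and use the invariance $\Phi(\mu)(\tilde P_t^\mu|f|^p) = \Phi(\mu)(|f|^p)$ to obtain $(\tilde P_t^\mu|f|)^p(x) \leq \Phi(\mu)(|f|^p)\,\Psi_t^\mu(x)^{-1}$, where $\Psi_t^\mu(x) := \int \e^{-(p-1)(h_1 t + h_2 K_2|x-y|^2/(\e^{K_2 t}-1))}\,\Phi(\mu)(\d y)$. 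The inequality $|x-y|^2 \leq 2|x|^2 + 2|y|^2$ together with the uniform bound $\Phi(\mu)(|\cdot|^2) \leq M$ from Theorem~\ref{thm: IPM}(1) (used via Chebyshev to lower bound $\Phi(\mu)(B_R)$ uniformly in $\mu$) gives $\Psi_t^\mu(x)^{-1} \leq c\,\e^{\lambda|x|^2}$ with $\lambda$ proportional to $(p-1)/(\e^{K_2 t}-1)$. Squaring, integrating in $x$ against $\Phi(\mu)$, and invoking \eqref{emo} once $2\lambda \leq \vv$ (achieved by taking $t_0$ sufficiently large) delivers the uniform constant $C_2$. The principal obstacle is calibrating $\xi_s$ in Part~(1) so that the Girsanov exponent carries exactly $(\e^{K_2 t}-1)^{-1}$ rather than the weaker $t^{-1}$; it is precisely this stronger decay that, in Part~(2), permits $\lambda$ to be driven below $\vv/2$ and thereby activates \eqref{emo} uniformly across $\mu \in \hat{\scr P}_{2,M}(\R^d)$.
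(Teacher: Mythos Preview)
Your overall plan matches the paper's proof closely: the coupling-by-change-of-measure with forcing rate $\xi_s\sim K_2\e^{K_2s}/(\e^{K_2t}-\e^{K_2s})$ is exactly the reciprocal of the paper's time-scale $\xi_t=\frac{1}{K_2}(\e^{-K_2(t-t_0)}-1)$, and Part~(2) is the same integrate-in-$y$-against-$\Phi(\mu)$ argument combined with \eqref{emo}. So strategically you are on track.

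There is, however, one concrete gap in Part~(1). Your ``elementary bound''
\[
K_1 a^2 1_{\{a\le r_0\}}-K_2 a^2 1_{\{a>r_0\}}\le (K_1+K_2)r_0^2-K_2 a^2
\]
feeds a \emph{constant} residual $(K_1+K_2)r_0^2$ into $\d|Z_s|^2$. When you then pass to the quantity needed for the Girsanov exponent, this constant is weighted by the forcing rate, and $\int_0^t\xi_s\,\d s=+\infty$ (since $\xi_s\sim (t-s)^{-1}$ near $s=t$). A direct Gr\"onwall computation confirms that the contribution of the $r_0^2$-residual to $\int_0^t\xi_s^2|Z_s|^2\,\d s$ diverges, so your claim that ``the residual $r_0^2$ term contributes $h_1 t$'' fails as written. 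The paper avoids this by using the sharper bound
\[
K_1 a^2 1_{\{a\le r_0\}}-K_2 a^2 1_{\{a>r_0\}}\le (K_1+K_2)r_0\,a-K_2 a^2,
\]
which keeps a factor $|Z_s|$ in the residual. At the level of the weighted quantity $|Z_s|^2/\xi_s$ (paper's notation) this produces $\frac{(K_1+K_2)r_0|Z|}{\xi}$, and only \emph{after} Young's inequality $\frac{r_0|Z|}{\xi}\le \frac{|Z|^2}{2\xi^2}+\frac{r_0^2}{2}$ does one obtain a genuinely constant term---now in $\d\frac{|Z|^2}{\xi}$ rather than in $\d|Z|^2$---which integrates to the claimed $h_1 t$. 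Replace your constant bound by this linear one and the rest of your argument goes through.
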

\begin{proof}

Let $\xi_t=\frac{1}{K_2}(\e^{-K_2(t-t_0)}-1), t\in[0,t_0], $  for fixed $t_0>0$. It is easy to see that
\begin{align}
\label{xit}\xi_t'+K_2\xi_t=-1, \ \ t\in[0,t_0].
\end{align}
Consider the following coupling:
\begin{equation}\label{Ren}
\begin{cases}
\d \tilde{X}_t^\mu=b(\tilde{X}_t^\mu,\mu)\d t+\sigma(\tilde{X}_t^\mu,\mu)\d W_t,\ \ \tilde{X}_0^\mu=x,\\
\d Y_t= b(Y_t,\mu)\d t+\sigma(Y_t,\mu)\Big(\d W_t+ \hat\sigma(\tilde{X}_t^\mu,\mu)\frac{\tilde{X}_t^\mu-Y_t }{\xi_t}\d t\Big),\ \ Y_0=y, t\in[0,t_0),
\end{cases}
\end{equation}
where  $\hat \si(x,\mu):=[\sigma^\ast(\sigma\sigma^\ast)^{-1}](x,\mu)$.
Set
\begin{align*}
\d \tilde{W}_t:=\d W_t+\hat\sigma(\tilde{X}_t^\mu,\mu)\frac{\tilde{X}_t^\mu-Y_t}{\xi_t}\d t.
\end{align*}
Obviously, \eqref{sli} yields that
\begin{align}\label{etasq}
\left|\hat\sigma(\tilde{X}_t^\mu,\mu)\frac{\tilde{X}_t^\mu-Y_t}{\xi_t}\right|^2\leq \delta_2^{-1}\frac{|\tilde{X}_t^\mu-Y_t|^2}{\xi_t^2}.
\end{align}
According to {\bf(H)}, as explained in the proof of \cite[Theorem 1.1]{W2011}, $(\tilde{W}_t)_{t\in[0,t_0]}$ is an $n$-dimensional Brownian motion under $\d\Q=R_{t_0}\d\P$, where
\begin{align*}
R_{s}=\exp\left(\int_0^{s}-\left\<\hat\sigma(\tilde{X}_t^\mu,\mu)\frac{\tilde{X}_t^\mu-Y_t}{\xi_t},\d W_t\right\>-\frac{1}{2}\int_0^{s}\left|\hat\sigma(\tilde{X}_t^\mu,\mu)\frac{\tilde{X}_t^\mu-Y_t}{\xi_t}\right|^2\d t\right)
\end{align*}
with $\{R_s\}_{s\in[0,t_0]}$ being a uniformly integrable martingale and $\Q$-a.s. $\tilde{X}_{t_0}^\mu=Y_{t_0}$.
Notice that \eqref{Ren} can be reformulated as follows:
\begin{equation*}
\begin{cases}
\d \tilde{X}_t^\mu =b(\tilde{X}_t^\mu,\mu)\d t+\sigma(\tilde{X}_t^\mu,\mu)\d \tilde{W}_t-\frac{\tilde{X}_t^\mu-Y_t}{\xi_t}\d t,\ \ \tilde{X}_0^\mu=x,\\
\d Y_t=b(Y_t,\mu)\d t+\sigma(Y_t,\mu)\d \tilde{W}_t,\ \ Y_0=y.
\end{cases}
\end{equation*}
Observe that {\bf(H)} implies
\begin{align*}
&2\<b(x,\mu)-b(y,\mu),x-y\>+\|\sigma(x,\mu)-\sigma(y,\mu)\|_{HS}^2\\
&\leq (K_1+K_2)|x-y|^21_{\{|x-y|\leq r_0\}}-K_2|x-y|^2\\
&\leq (K_1+K_2)r_0|x-y|-K_2|x-y|^2.
\end{align*}
By It\^{o}'s formula, we derive from this and  \eqref{xit} that
\begin{align}\label{zxi}
\nonumber\d \frac{|\tilde{X}_t^\mu-Y_t|^2}{\xi_t}&=-\frac{(\xi_t'+2)|\tilde{X}_t^\mu-Y_t|^2}{\xi_t^2}\d t +\frac{2}{\xi_t}\<(\sigma(\tilde{X}_t^\mu,\mu)-\sigma(Y_t,\mu))\d \tilde{W}_t, \tilde{X}_t^\mu-Y_t\>\\
\nonumber&\quad+\frac{1}{\xi_t}\big( 2\<b(\tilde{X}_t^\mu,\mu)-b(Y_t,\mu),\tilde{X}_t^\mu-Y_t\> +
 \|\sigma(\tilde{X}_t^\mu,\mu)-\sigma(Y_t,\mu)\|_{HS}^2\big)\d t\\
\nonumber&\leq -\frac{|\tilde{X}_t^\mu-Y_t|^2}{\xi_t^2}\d t+\frac{(K_1+K_2)r_0|\tilde{X}_t^\mu-Y_t|}{\xi_t}\d t\\
&\qquad\quad+\frac{2\<(\sigma(\tilde{X}_t^\mu,\mu)-\sigma(Y_t,\mu))\d \tilde{W}_t, \tilde{X}_t^\mu-Y_t\>}{\xi_t}\\
\nonumber&\leq -\frac{1}{2}\frac{|\tilde{X}_t^\mu-Y_t|^2}{\xi_t^2}\d t+\frac{1}{2}(K_1+K_2)^2r_0^2\d t\\
\nonumber&\qquad\quad+\frac{2\<(\sigma(\tilde{X}_t^\mu,\mu)-\sigma(Y_t,\mu))\d \tilde{W}_t, \tilde{X}_t^\mu-Y_t\>}{\xi_t},\ \ t<t_0.
\end{align}
For any $r>0$, it follows from \eqref{zxi} that
\begin{align*}
&\E_{\Q}\exp\left(r\int_0^{t_0}\frac{|\tilde{X}_t^\mu-Y_t|^2}{\xi_t^2}\d t\right)\\
&\leq \exp\left(2r\frac{|x-y|^2}{\xi_0}+r(K_1+K_2)^2r_0^2t_0\right)\\
&\qquad\quad\times\E_{\Q}\exp\left(r\int_0^{t_0}\frac{4\<(\sigma(\tilde{X}_t^\mu,\mu)-\sigma(Y_t,\mu))\d \tilde{W}_t, \tilde{X}_t^\mu-Y_t\>}{\xi_t}\right)\\
&\leq\exp\left(2r\frac{|x-y|^2}{\xi_0}+r(K_1+K_2)^2r_0^2t_0\right)\\
&\qquad\quad\times\left(\E_{\Q}\exp\left(128r^2\delta_1\int_0^{t_0}\frac{|\tilde{X}_t^\mu-Y_t|^2}{\xi_t^2}\right)\right)^{\frac{1}{2}}.
\end{align*}
where in the last step, we used $\|\sigma(x_1,\mu)^\ast-\sigma(x_2,\mu)^\ast\|^2\leq 4\delta_1$ and that  for a continuous exponential integrable martingale $M_t$, it holds
$$\E\e^{M_t}\leq (\E\e^{2\<M\>_t})^{\frac{1}{2}}.$$
Taking $r=\frac{1}{128\delta_1}$ and using $\xi_0=\frac{1}{K_2}(\e^{K_2t_0}-1)$, we arrive at
\begin{align}\label{zetsq}
\nonumber &\E_{\Q}\exp\left(\frac{1}{128\delta_1}\int_0^{t_0}\frac{|\tilde{X}_t^\mu-Y_t|^2}{\xi_t^2}\d t\right)\\
&\leq \exp\left(\frac{1}{32\delta_1}\frac{K_2|x-y|^2}{\e^{K_2t_0}-1}+\frac{1}{64\delta_1}(K_1+K_2)^2r_0^2t_0\right).
\end{align}
Let $\eta_t=-\hat\sigma(\tilde{X}_t^\mu,\mu)\frac{\tilde{X}_t^\mu-Y_t}{\xi_t}$. Observe that for any $p>1$, it holds
\begin{align*}
\E R_{t_0}^{\frac{p}{p-1}}=\E_{\Q}R_{t_0}^{\frac{1}{p-1}}
&= \E_{\Q}\Bigg(\exp\left((p-1)^{-1}\int_0^{t_0}\<\eta_t,\d \tilde{W}_t\>-(p-1)^{-2}\int_0^{t_0}|\eta_t|^2\d t\right)\\
&\qquad\quad\times\exp\left(\left(\frac{1}{2}(p-1)^{-1}+(p-1)^{-2}\right)\int_0^{t_0}|\eta_t|^2\d t\right)\Bigg)\\
&\leq \left(\E_{\Q}\exp\left(\left((p-1)^{-1}+2(p-1)^{-2}\right)\int_0^{t_0}|\eta_t|^2\d t\right)\right)^{\frac{1}{2}}.
\end{align*}
Take $(p_0-1)^{-1}=\frac{1}{2}\wedge \frac{1}{256\delta_2^{-1}\delta_1}$. Then for any $p\geq p_0$, it holds
$$(p-1)^{-1}+2(p-1)^{-2}\leq (p_0-1)^{-1}+2(p_0-1)^{-2}<2(p_0-1)^{-1}\leq\frac{1}{128\delta_2^{-1}\delta_1}.$$ So, for any $p\geq p_0$, we derive from \eqref{etasq} and \eqref{zetsq} that
\begin{align}\label{Rtp}
\E R_{t_0}^{\frac{p}{p-1}}\leq \exp\left(\frac{1}{64\delta_1}\frac{K_2|x-y|^2}{\e^{K_2t_0}-1}+\frac{1}{128\delta_1}(K_1+K_2)^2r_0^2t_0\right).
\end{align}
Furthermore, it follows from the fact $\Q$-a.s. $\tilde{X}_{t_0}^\mu=Y_{t_0}$ and the H\"{o}lder inequality that for any $p>1$,
$$(\tilde{P}_{t_0}^\mu |f|(y))^p=(\E_{\Q}(|f|(Y_{t_0})))^p=(\E_{\Q}(|f|(\tilde{X}_t^\mu)))^p\leq \tilde{P}_{t_0}^\mu |f|^p(x)\left(\E R_{t_0}^{\frac{p}{p-1}}\right)^{p-1}.$$
Combining this with \eqref{Rtp}, we finish the proof.

(2) Observing (1) and following the strategy adopted in \cite[Corollary 1.3]{W2011}, it is more or less standard to derive from \eqref{niHar} that for some locally bounded function $(0,\infty)\ni t\mapsto C_t\in[0,\infty)$,
\begin{align*}
\sup_{\{\Phi(\mu)\}(|f|^p)\leq 1}(\tilde{P}_t^\mu |f|)^{2p}(x)
&\leq  C_t\exp\left( \frac{h_2K_2}{\e^{K_2t}-1}|x|^2\right),\ \ \mu\in\scr P_2(\R^d), x\in\R^d.
\end{align*}
Then, by taking $t_0>0$ such that $\frac{h_2K_2}{\e^{K_2t_0}-1}=\vv$, \eqref{emo} enables us to derive that
\begin{align}\label{hyper}
\|\tilde{P}_{t_0}^\mu\|_{L^p(\Phi(\mu))\to L^{2p}(\Phi(\mu))}^{2p}\leq C_{t_0}\big(\Phi(\mu)\big)(\e^{\vv|\cdot|^2})\leq C_{t_0}C_1, \ \ \mu\in\hat{\scr P}_{2,M}(\R^d).
\end{align}
The proof is completed.
\end{proof}
\subsubsection{Log-Sobolev inequality}


Before we prove the uniform log-Sobolev inequality for $\Phi(\mu)$ in $\mu$ with bounded second moments stated in Theorem \ref{logta0}, we give a general result Lemma \ref{RWlog}. The first assertion in Lemma \ref{RWlog} is known by \cite[Theorem 2.1(1)]{RW2003}, which states that the hyperboundedness together with log-Sobolev inequality of the semigroup implies the defective log-Sobolev inequality for the corresponding IPM. Let us remark that although \cite[Theorem 2.1(1)]{RW2003} requires a curvature lower bound, the proof of it reveals that only the log-Sobolev inequality of the semigroup is used.  Lemma \ref{RWlog}(2) is devoted to derive uniform log-Sobolev inequality from uniform defective log-Sobolev inequality for IPMs associated to a family of diffusion processes.  The tightness of IPMs and uniform local bounds on the densities of each IPM will play an important role.

\begin{lem}\label{RWlog}

\begin{enumerate}
\item[$(1)$]
Let $L=\frac12\mathrm{tr}(a\nabla^2)+\<b,\nabla\>$ be the generator of a conservative diffusion process on $\R^d$ with associated semigroup $(P_t)_{t\geq0}$.
Let $\bar{\mu}\in\mathscr P(\R^d)$ be the IPM of $(P_t)_{t\geq0}$. Assume that there exist  $t_0>0 $, $1<p<q<\infty$ and $C_0>0$ such that
$\|P_{t_0}\|_{p\to q }:= \|P_{t_0}\|_{L^p(\bar{\mu})\to L^q(\bar{\mu})}<C_0.$
If additionally the log-Sobolev inequality for $P_t$ holds, i.e.
\begin{align*}P_t(f\log f)-P_t f\log P_t f\leq 4h(t)P_t( |\nabla f^{\frac{1}{2}}|^2),\ \ t\geq 0, 0<f\in C_0^\infty(\R^d),
\end{align*}
for some measurable function $h:[0,\infty)\to[0,\infty)$, then
 $\bar{\mu}$ satisfies the following defective log-Sobolev inequality:
\begin{align*}&\bar{\mu}(f\log f)-\bar{\mu}(f)\log \bar{\mu}(f)\\
&\leq \frac{4p(q-1)}{q-p}h(t_0)\bar{\mu} ( |\nabla f^{\frac{1}{2}}|^2)+\frac{pq}{q-p}\log\|P_{t_0}\|_{p\to q },\ \ \bar{\mu}(f)=1, 0<f\in C_0^\infty(\R^d).
\end{align*}
\item[$(2)$] Let $L^\theta=\frac12\mathrm{tr}(a^\theta\nabla^2)+\<b^\theta,\nabla\>$, $\theta\in\Theta$ be a family of conservative diffusion processes on $\R^d$ with associated semigroup $(P_t^\theta)_{t\geq0}$ and IPM $\bar{\mu}^\theta$ satisfying the assumptions in (1) uniformly in $\theta$. Assume that $\{\bar{\mu}^\theta\}_{\theta\in\Theta}$ form a tight subset of $\mathscr P(\R^d)$ and that for any $R>0$, there exist positive constants $C_1(R)\leq C_2(R)$ such that $\bar{\rho}_\theta=\frac{\d \bar{\mu}^\theta}{\d x},\ \ \theta\in\Theta$ satisfy
\begin{align}\label{ulb}C_{1}(R)\leq \bar{\rho}_\theta(x)\leq C_2(R),\ \ x\in B_{R}:=\{x\in\R^d, |x|\leq R\}, \theta\in\Theta.
\end{align}
Then the weak Poincar\'{e} inequality holds, i.e.
$$\bar{\mu}^\theta(f^2)\leq
\alpha(r)\bar{\mu}^\theta ( |\nabla f|^2)+r\|f\|_\infty^2,\ \ r>0,0<f\in C_0^\infty(\R^d),\bar{\mu}^\theta(f)=0,\theta\in\Theta$$
for some decreasing function $\alpha:(0,\infty)\to[0,\infty).$
Moreover, the log-Sobolev inequality holds, that is there exists a constant $C_{LS}>0$ such that
\begin{align}\label{TLS}&\bar{\mu}^\theta(f\log f)-\bar{\mu}^\theta(f)\log \bar{\mu}^\theta(f)\leq C_{LS}\bar{\mu}^\theta (|\nabla f^{\frac{1}{2}}|^2),\ \  0<f\in C_0^\infty(\R^d),\theta\in\Theta.
\end{align}
\end{enumerate}
\end{lem}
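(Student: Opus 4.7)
The proof naturally splits along the two assertions.

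For (1), the plan is to transfer the semigroup log-Sobolev inequality to the invariant measure via invariance, then absorb the resulting defect using hyperboundedness. Integrating the semigroup LSI at $t = t_0$ against $\bar\mu$ and using $\bar\mu(P_{t_0} g) = \bar\mu(g)$ gives, for $f>0$ with $\bar\mu(f) = 1$,
\begin{equation*}
\bar\mu(f\log f) - \Ent_{\bar\mu}(P_{t_0}f) \leq 4h(t_0)\,\bar\mu\bigl(|\nabla f^{1/2}|^2\bigr).
\end{equation*}
The residual entropy $\Ent_{\bar\mu}(P_{t_0}f)$ is then controlled by the hyperbound $\|P_{t_0}f\|_q \leq \|P_{t_0}\|_{p\to q}\|f\|_p$ combined with a Gross-Rothaus style interpolation (optimizing a free exponent in $(p,q)$); this is exactly the argument of \cite[Theorem 2.1(1)]{RW2003}, whose proof uses only the semigroup LSI rather than the curvature lower bound stated there. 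Reorganizing yields the claimed constants $\tfrac{4p(q-1)}{q-p}h(t_0)$ in front of the Dirichlet form and $\tfrac{pq}{q-p}\log\|P_{t_0}\|_{p\to q}$ as the additive defect.

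For (2), I would proceed in three steps. First, the two-sided density bound $C_1(R) \leq \bar\rho_\theta \leq C_2(R)$ on $B_R$, combined with the classical Poincar\'e inequality for Lebesgue measure on Euclidean balls, yields via a Holley-Stroock perturbation a local Poincar\'e inequality for $\bar{\mu}^\theta|_{B_R}$ with constant $\tilde C_P(R)$ independent of $\theta$. Second, tightness delivers the weak Poincar\'e: given $r > 0$, pick $R = R(r)$ so that $\bar{\mu}^\theta(B_R^c) \leq r$ uniformly in $\theta$, and for $f$ with $\bar{\mu}^\theta(f) = 0$ decompose
$\bar{\mu}^\theta(f^2) = \bar{\mu}^\theta(f^2 \mathbf{1}_{B_R}) + \bar{\mu}^\theta(f^2 \mathbf{1}_{B_R^c})$, bounding the first term via the local Poincar\'e (after a mean correction of order $r\|f\|_\infty$) and the second by $r\|f\|_\infty^2$. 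This produces $\alpha(r)$ depending only on $\tilde C_P(R(r))$, hence independent of $\theta$.

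The final step combines the defective LSI of (1) with the weak Poincar\'e to produce the full LSI \eqref{TLS}. Writing the defective LSI in the form $\Ent_{\bar{\mu}^\theta}(f^2) \leq A\,\bar{\mu}^\theta(|\nabla f|^2) + B\,\bar{\mu}^\theta(f^2)$ (with $A,B$ uniform in $\theta$), applying it to $\tilde f = f - \bar{\mu}^\theta(f)$, and invoking Rothaus's inequality $|\Ent_{\bar{\mu}^\theta}(f^2) - \Ent_{\bar{\mu}^\theta}(\tilde f^2)| \leq 2\,\mathrm{Var}_{\bar{\mu}^\theta}(f)$ reduces the task to bounding $\mathrm{Var}_{\bar{\mu}^\theta}(f)$. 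The main technical obstacle is that only a \emph{weak} Poincar\'e is available, so the classical Rothaus bootstrap must be modified: one applies the weak Poincar\'e to truncations $f_N := (f \wedge N) \vee (-N)$, controls $\|f - f_N\|_{L^2(\bar{\mu}^\theta)}$ via the defective LSI and an entropy-moment estimate, and then optimizes $(r,N)$ to close the bound. All resulting constants depend only on $A$, $B$, and $\alpha$, and hence are uniform in $\theta$, yielding $C_{LS}$.
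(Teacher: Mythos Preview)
Your treatment of part (1) matches the paper exactly: both invoke \cite[Theorem 2.1(1)]{RW2003} and note that only the semigroup LSI (not a curvature bound) is used there. Likewise, your first two steps in part (2)---the Holley--Stroock comparison with the Lebesgue Poincar\'e on $B_R$ to get a local Poincar\'e with $\theta$-independent constant, followed by tightness to upgrade to a uniform weak Poincar\'e---coincide with the paper's argument.

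The difference lies in the final step. The paper takes the modular route: the uniform defective LSI implies a uniform \emph{defective Poincar\'e} (\cite[Theorem 3.3.1]{W2005}, essentially the linearization $f\mapsto 1+\varepsilon f$), and then weak Poincar\'e $+$ defective Poincar\'e $\Rightarrow$ full Poincar\'e by \cite[Proposition 4.1.2(2)]{W2005}; finally, defective LSI $+$ Poincar\'e $\Rightarrow$ LSI via \cite[Corollary 1.3]{Wang2014} (Rothaus). Your approach instead attacks $\mathrm{Var}_{\bar\mu^\theta}(f)$ directly by truncation: bound $\mathrm{Var}(f_N)$ via the weak Poincar\'e (picking up $rN^2$), bound $\|f-f_N\|_{L^2}^2$ by an entropy tail estimate of the form $\bar\mu^\theta(f^2;|f|>N)\le (\log(N^2/\bar\mu^\theta(f^2)))^{-1}\bigl(\mathrm{Ent}(f^2)+e^{-1}\bar\mu^\theta(f^2)\bigr)$ combined with the defective LSI, and then take $N^2=M\bar\mu^\theta(f^2)$ with $M$ large and $r$ small so that the coefficient of $\bar\mu^\theta(f^2)$ on the right is strictly less than one. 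This closes and yields a uniform Poincar\'e, after which Rothaus finishes as in the paper. Your route is more self-contained but technically heavier; the paper's is shorter because it offloads the bootstrapping to existing results in \cite{W2005}, at the cost of those citations. Both give uniform constants in $\theta$.
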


\begin{proof} (1) follows from the proof of \cite[Theorem 2.1(1)]{RW2003} so that we omit it. It remains to prove (2). Observe that for any $R>0$, the classical Poincar\'{e} inequality for Lebesgue measure holds, i.e.
\begin{align}\label{LPI}\frac{\int_{B_R}f^2\d x}{|B_R|}\leq \frac{\left(\int_{B_R}f\d x\right)^2}{|B_R|^2}+ \frac{\beta(R)}{|B_R|}\int_{B_R} |\nabla f|^2\d x,\ \ f\in C_0^\infty(\R^d)
\end{align}
for some constant $\beta(R)>0$. When $(\bar{\mu}^\theta)_{\theta\in\Theta}$ is tight, for any $\vv\in(0,1)$, we can take $R>0$ large enough such that $\bar{\mu}^\theta(B_R)\geq 1-\vv, \theta\in\Theta$.
Let $\d \bar{\mu}^\theta_R=\frac{1_{B_R}}{\bar{\mu}^\theta(B_R)}\d \bar{\mu}^\theta$.
It follows from \eqref{ulb} and \eqref{LPI} that
\begin{align*}
&\int_{\R^d}(f-\bar{\mu}^\theta_R(f))^2\d \bar{\mu}^\theta_R\\
&=\inf_{c\in\R}\int_{\R^d}(f-c)^2\d \bar{\mu}^\theta_R\\
&=\inf_{c\in\R}\frac{|B_R|}{\bar{\mu}^\theta(B_R)}\int_{\R^d}(f-c)^2\frac{1_{B_R}}{|B_R|}\bar{\rho}_\theta\d x\\
&\leq\frac{\beta(R)C_2(R)}{\bar{\mu}^\theta(B_R)}\int_{\R^d} 1_{B_R}|\nabla f|^2\d x\\
&\leq\frac{\beta(R)C_2(R)}{C_1(R)\bar{\mu}^\theta(B_R)}\int_{\R^d} |\nabla f|^2\d \bar{\mu}^\theta,\ \ f\in C_0^\infty(\R^d),\theta\in\Theta.
\end{align*}
This means
\begin{align*}
&\int_{B_R}f^2\d \bar{\mu}^\theta\leq \frac{\left(\int_{B_R}f\d \bar{\mu}^\theta\right)^2}{\bar{\mu}^\theta(B_R)}+\frac{\beta(R)C_2(R)}{C_1(R)}\int_{\R^d} |\nabla f|^2\d \bar{\mu}^\theta,\ \ f\in C_0^\infty(\R^d),\theta\in\Theta.
\end{align*}
This implies that
$$\bar{\mu}^\theta(f^2)\leq
\frac{\beta(R)C_2(R)}{C_1(R)}\bar{\mu}^\theta ( |\nabla f|^2)+\frac{\vv}{1-\vv}\|f\|_\infty^2,\ \ f\in C_0^\infty(\R^d),\bar{\mu}^\theta(f)=0,\theta\in\Theta.$$
This together with the fact that $(\bar{\mu}^\theta)_{\theta\in\Theta}$ is tight implies the weak Poincar\'{e} inequality
$$\bar{\mu}^\theta(f^2)\leq \alpha(r)\bar{\mu}^\theta ( |\nabla f|^2)+r\|f\|_\infty^2,\ \ r>0, f\in C_0^\infty(\R^d),\bar{\mu}^\theta(f)=0,\theta\in\Theta
$$
holds for $$\alpha(r):=\inf\left\{\frac{\beta(R)C_2(R)}{C_1(R)}: \inf_{\theta\in\Theta}\bar{\mu}^\theta(B_R)\geq \frac{1}{1+r}\right\}.$$
Observe that due to \cite[Theorem 3.3.1]{W2005}, the uniform defective log-Sobolev inequality in $\theta$ due to (1) implies the defective Poincar\'{e} inequality
\begin{align}\label{dpi}\bar{\mu}^\theta(f^2)\leq C_1\bar{\mu}^\theta ( |\nabla f|^2)+C_2\bar{\mu}^\theta(|f|)^2,\ \ f\in C_0^\infty(\R^d),\theta\in \Theta.
\end{align}
We can take $r_0>0$ satisfying $c_0:=\frac{1}{2}(1+r_0+\sqrt{(C_2+1+r_0)r_0})<1$ such that
\begin{align*}\bar{\mu}^\theta(f^2)\leq \alpha(r_0)\bar{\mu}^\theta ( |\nabla f|^2)+r_0\|f\|_\infty^2,\ \ f\in C_0^\infty(\R^d),\bar{\mu}^\theta(f)=0,\theta\in\Theta.
\end{align*}
By \cite[Proposition 4.1.2(2)]{W2005}, we conclude from this and \eqref{dpi} that the Poincar\'{e} inequality holds:
$$\bar{\mu}^\theta(f^2)-\bar{\mu}^\theta(f)^2\leq \frac{C_1+\alpha(r_0)}{1-c_0}\bar{\mu}^\theta ( |\nabla f|^2),\ \ f\in C_0^\infty(\R^d), \theta\in \Theta.$$
This together with the defective log-Sobolev inequality implies \eqref{TLS}, see for instance the proof of \cite[Corollary 1.3]{Wang2014}.
\end{proof}

\begin{thm}\label{logta0}
Assume {\bf (H)} and  $\sigma(x,\gamma)=\sigma(\gamma)$. Then $\Phi(\mu)$ satisfies the log-Sobolev inequality
\begin{align}\label{losob0} \Phi(\mu)(f\log f)- \Phi(\mu)(f)\log\Phi(\mu)(f) \leq C_{L}\Phi(\mu)(|\nabla f^{\frac{1}{2}}|^2),\ \ \mu\in\hat{\scr P}_{2,M}(\R^{d})
\end{align}
for some constant $C_L>0$ and $ f\in C_0^\infty(\R^d)$ with $f>0$.
\end{thm}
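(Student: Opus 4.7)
The plan is to combine Theorem \ref{Poi3} (log-Sobolev inequality for the frozen semigroup), Lemma \ref{GTY}(2) (uniform hyperboundedness of $\tilde{P}_{t_0}^\mu$), and Lemma \ref{RWlog} (which turns these ingredients into a uniform log-Sobolev inequality for the associated IPMs). First I would verify that \textbf{(H)} together with $\sigma(x,\gamma)=\sigma(\gamma)$ implies the hypotheses of Theorem \ref{Poi3}: the partial dissipativity in \eqref{Mon1} is dominated by $K_1|x-y|^2+K_I\W_2(\gamma,\tilde\gamma)^2$, giving \eqref{Mon}; the Hilbert--Schmidt bound on $\sigma$ follows from \eqref{sli}; and \eqref{ups} is exactly the upper bound of \eqref{sli}. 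Taking $\mu_\cdot\equiv\mu$ in Theorem \ref{Poi3} yields
\begin{equation*}
\tilde{P}_t^\mu(f\log f)-\tilde{P}_t^\mu f\log \tilde{P}_t^\mu f \leq \frac{2\delta_1}{K_1}\bigl(\e^{K_1 t}-1\bigr)\tilde{P}_t^\mu\bigl(|\nabla f^{\frac{1}{2}}|^2\bigr),
\end{equation*}
which is the semigroup log-Sobolev inequality required by Lemma \ref{RWlog}(1) with $h(t)=\frac{\delta_1}{2K_1}(\e^{K_1 t}-1)$, uniformly in $\mu$. Combining with the uniform hyperboundedness $\|\tilde P_{t_0}^\mu\|_{L^p(\Phi(\mu))\to L^{2p}(\Phi(\mu))}\leq C_2$ from Lemma \ref{GTY}(2), Lemma \ref{RWlog}(1) produces a defective log-Sobolev inequality for $\Phi(\mu)$ with constants independent of $\mu\in\hat{\scr P}_{2,M}(\R^d)$.

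To upgrade the defective inequality to \eqref{losob0}, I would then invoke Lemma \ref{RWlog}(2), whose two remaining hypotheses must be checked uniformly in $\mu$. Tightness of $\{\Phi(\mu)\}_{\mu\in\hat{\scr P}_{2,M}}$ is immediate from \eqref{emo}: by Chebyshev, $\Phi(\mu)(|x|>R)\leq C_1\e^{-\vv R^2}$, independent of $\mu$. The main obstacle is establishing the uniform two-sided density bound \eqref{ulb}, namely that $\bar\rho_\mu:=\d\Phi(\mu)/\d x$ exists and satisfies $C_1(R)\leq \bar\rho_\mu(x)\leq C_2(R)$ on $B_R$ with constants independent of $\mu\in\hat{\scr P}_{2,M}$.

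Here I would exploit that $\sigma(x,\mu)=\sigma(\mu)$ is $x$-independent, so $\bar\rho_\mu$ solves the uniformly elliptic stationary Fokker--Planck equation
\begin{equation*}
\tfrac12\sum_{i,j}\bigl(\sigma(\mu)\sigma(\mu)^\ast\bigr)_{ij}\partial_{ij}\bar\rho_\mu-\sum_i\partial_i\bigl(b_i(\cdot,\mu)\bar\rho_\mu\bigr)=0
\end{equation*}
on $\R^d$. The ellipticity $\delta_2 I\leq \sigma\sigma^\ast\leq \delta_1 I$ and the local boundedness of $b(\cdot,\mu)$ on any $B_R$ uniformly in $\mu\in\hat{\scr P}_{2,M}$ (since $b$ is bounded on bounded sets of $\R^d\times\scr P_2(\R^d)$ and $\hat{\scr P}_{2,M}$ is bounded in $\scr P_2$) permit the Krylov--Safonov/Moser Harnack inequality applied to $\bar\rho_\mu$, giving $\sup_{B_R}\bar\rho_\mu\leq C(R)\inf_{B_R}\bar\rho_\mu$ with $C(R)$ independent of $\mu$. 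Choosing $R_0$ large enough so that $\Phi(\mu)(B_{R_0})\geq \tfrac12$ uniformly in $\mu$ (possible by the tightness bound) supplies the initial lower bound $\inf_{B_{R_0}}\bar\rho_\mu\geq \frac{1}{2 C(R_0)|B_{R_0}|}$, and chaining Harnack inequalities along overlapping balls extends this to arbitrary $R$ while the same chaining argument gives the matching upper bound. Combined with the semigroup data established above, Lemma \ref{RWlog}(2) then delivers \eqref{losob0} with a constant $C_L$ independent of $\mu\in\hat{\scr P}_{2,M}(\R^d)$.
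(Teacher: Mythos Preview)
Your proposal is correct and follows essentially the same route as the paper: derive the semigroup log-Sobolev inequality for $\tilde P_t^\mu$ from Theorem \ref{Poi3}, combine it with the uniform hyperboundedness of Lemma \ref{GTY}(2) via Lemma \ref{RWlog}(1) to obtain a uniform defective log-Sobolev inequality, and then apply Lemma \ref{RWlog}(2) using tightness from \eqref{emo} together with a Harnack inequality for the stationary density $\rho_\mu$ (the paper cites \cite[Corollary 1.2.11]{BKR} for this step, which is precisely the Krylov--Safonov/Moser-type statement you invoke). The only cosmetic difference is that the paper fixes one radius $R$ with $\Phi(\mu)(\bar B_R)\ge\tfrac12$ and reads off both bounds directly from $\sup_{\bar B_R}\rho_\mu\le C_0\inf_{\bar B_R}\rho_\mu$, whereas you spell out the chaining to arbitrary $R$; both are equivalent since Lemma \ref{RWlog}(2) only uses the bounds for sufficiently large $R$.
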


\begin{rem} In the symmetric case on $\R^d$, for $\mu(\d x)=\e^{-V}\d x/\int_{\R^d}\e^{-V}\d x$, the Dirichlet form is $\scr E(f,f)=\mu(|\nabla f|^2)$. \cite[Theorem 3.17]{CGWW} uses the Lyapunov conditions to derive the super Poincar\'{e} inequality and hence defective log-Sobolev inequality. The main idea of the proof is first proving a local super Poincar\'{e} inequality and then use Lyapunov condition to obtain a super Poincar\'{e} inequality on the whole space. This method relies crucially on the explicit information of $V$. In Theorem \ref{logta0}, we only present some conditions on the coefficients while the invariant probability measure is implicit.
\end{rem}
\vspace{-1em}
\begin{proof}

Since $\sigma(x,\gamma)=\sigma(\gamma)$, by \eqref{til} with $s=0$, we have
\begin{align}\label{tillo} \nonumber&\tilde{P}_{t}^\mu(f\log f)-\tilde{P}_{t}^\mu(f)\log \tilde{P}_{t}^\mu(f)\\
 &\leq \frac{2\delta_1 }{K_1}(\e^{K_1 t}-1)\tilde{P}_{t}^\mu\big( |\nabla   f^{\frac{1}{2}}|^2 \big),\ \ t\geq 0, \mu\in\hat{\scr P}_{2,M}(\R^d), 0<f\in C_0^\infty(\R^d).
\end{align}
This, together with \eqref{hyper} and Lemma \ref{RWlog}(1), implies that the following  defective log-Sobolev inequality
\begin{align*}&\Phi(\mu)(f\log f)-\Phi(\mu)(f)\log \Phi(\mu)(f)\\
&\leq \frac{2\delta_1}{K_1}(\e^{K_1t_0}-1)(2p-1)\Phi(\mu) ( |\nabla f^{\frac{1}{2}}|^2 )+\log(C_{t_0}C_1),\ \ \mu\in\hat{\scr P}_{2,M}(\R^d), 0<f\in C_0^\infty(\R^d).
\end{align*}

Since $\{\Phi(\mu): \mu\in\hat{\scr P}_{2,M}(\R^{d})\}$ is tight due to \eqref{emo}, according to Lemma \ref{RWlog}(2), in order to derive the desired log-Sobolev inequality \eqref{losob0}, it remains to prove that  for any $R>0$, there exist positive constants $C_1$ and $C_2$ depending on $R$, $M$ such that
\begin{align}\label{upblo}C_1\leq \rho_\mu(x)\leq C_2,\ \ x\in B_R, \mu\in\hat{\scr P}_{2,M}(\R^{d})
\end{align}
for $\rho_\mu:=\frac{\d \Phi(\mu)}{\d x}$.
Since $b(x,\mu)$ is locally bounded in $(x,\mu)\in\R^d\times \scr P_2(\R^d)$, for any $R>0$, there exists $\eta(R,M)>0$ such that
$$ \sup_{|x|\leq R}|b(x,\mu)|\leq \eta(R,M),\ \ \mu\in\hat{\scr P}_{2,M}(\R^{d}).$$
 Noting $(\sigma\sigma^\ast)(\mu)\geq \delta_2I_{d\times d}$, by \cite[Corollary 1.2.11]{BKR}, for any $R>0$, there exists a constant $C_0$ depending on $R$, $\eta(R,M)$ and $\delta_2$ such that
$$\sup_{\bar{B}_R}\rho_\mu\leq C_0\inf_{\bar{B}_R}\rho_\mu,\ \ \mu\in\hat{\scr P}_{2,M}(\R^{d}).$$
Take $R>0$ such that $\inf_{\mu\in\hat{\scr P}_{2,M}(\R^{d})}\Phi(\mu)(\bar{B}_R)\geq \frac{1}{2}$, which can be ensured by \eqref{emo}. Then for any $x,y\in \bar{B}_R$, it holds
$$\rho_\mu(x)\leq C_0\rho_\mu(y),\ \ \mu\in\hat{\scr P}_{2,M}(\R^{d}).$$
This implies
$$\frac{1}{2}\leq\Phi(\mu)(\bar{B}_R)=\int_{\bar{B}_R}\rho_\mu(x)\d x\leq |\bar{B}_R|C_0\rho_\mu(y),\ \ y\in \bar{B}_R, \mu\in\hat{\scr P}_{2,M}(\R^{d}),$$
and
$$\rho_\mu(x)|\bar{B}_R|\leq \int_{\bar{B}_R}C_0\rho_\mu(y)\d y\leq C_0,\ \ x\in \bar{B}_R, \mu\in\hat{\scr P}_{2,M}(\R^{d}).$$
Therefore, \eqref{upblo} holds.
\end{proof}
\subsection{Exponential ergodicity of McKean-Vlasov SDEs}
We are now ready to prove exponential ergodicity of \eqref{EKy} in $L^2$-Wasserstein and relative entropy under the partially dissipative condition.
\beg{thm}\label{Erg} Assume {\bf (H)} and that $\sigma(x,\gamma)=\sigma(\gamma)$.
\begin{enumerate}
\item[$(1)$]
If $K_I$ is small enough, then  \eqref{EKy} has a unique IPM $\mu_\infty$ which satisfies the log-Sobolev inequality
\begin{align}\label{losob01}&\mu_\infty(f\log f)-\mu_\infty(f)\log \mu_\infty(f)\leq C_{L}\mu_\infty(|\nabla f^{\frac{1}{2}}|^2),\ \ 0<f\in C_0^\infty(\R^d)
\end{align}
for some constant $C_L>0$.
\item [$(2)$] Moreover, there exist constants $c>0, \lambda >0$ such that
\begin{align}\label{Wcove}\W_2(P_t^\ast\mu_0,\mu_\infty)\leq c\e^{-\lambda t}\W_2(\mu_0,\mu_\infty),\ \ t\geq 0,\mu_0\in\scr P_2(\R^d).
\end{align}
\item[$(3)$]
If in addition, $\sigma(x,\gamma)=\sigma$ and \begin{align}\label{stmon}&\nonumber\<b(x,\gamma)-b(y,\tilde{\gamma}),x-y\>\\
&\leq K|x-y|^2+K|x-y|\W_2(\gamma,\tilde{\gamma}),\ \ x,y\in\R^d, \gamma,\tilde{\gamma}\in\scr P_2(\R^d)
\end{align} for some constant $K\geq0$, then it holds
\begin{align}\label{Wcoveent}\mathrm{Ent}(P_t^\ast\mu_0|\mu_\infty)\leq \bar{c}\e^{-2\lambda t}\mathrm{Ent}(\mu_0|\mu_\infty),\ \ t\geq 1, \mu_0\in\scr P_2(\R^d)
\end{align}
for some constant $\bar{c}>0$.
\end{enumerate}
\vspace{-1.5em}
\begin{rem} As explained in the introduction, to derive the uniform in time log-Sobolev inequality, \cite[Proposition 3.9]{MRW} requires .
$$-\left\<\nabla U(x)+\nabla_xW(x,z)-\nabla U(y)-\nabla_yW(y,z),x-y\right\>\leq -\rho|x-y|^2,\ \ |x|\vee|y|\geq R,$$
which implies
\begin{align*}&-\left\<\nabla U(x)+\int_{\R^d}\nabla_xW(x,z)\mu(\d z)-\nabla U(y)-\int_{\R^d}\nabla_yW(y,z)\mu(\d z),x-y\right\>\\
&\leq -\rho|x-y|^2,\ \ |x-y|\geq 2R.
\end{align*}
So, the condition on the drift in \cite[Proposition 3.9]{MRW} is slightly stronger than {\bf (H)}. Besides, we do not require the high diffusivity as in \cite[Proposition 3.9]{MRW}.
Moreover, the
assumption on $\sigma$ in Theorem \ref{Erg}(3) is to derive the log-Harnack inequality. To our best knowledge, the log-Harnack inequality for McKean-Vlasov SDEs with $\sigma$ being dependent on the measure variable and $b$ satisfying \eqref{stmon} is still open.
\end{rem}
\begin{rem} Compared with the assumption $\sup_{\mu\in\scr P_2(\R^d)}|b(0,\mu)|<\infty$ in \cite[Corollary 2.14]{ZSQ}, $|b(0,\mu)|$ can be of linear growth in $\mu$ in Theorem \ref{Erg}, see Example \ref{exabc} below. The condition of $K_I$ being small enough is essential, otherwise the uniqueness of invariant probability measures may fail, see for instance \cite{Daw}.
\end{rem}
\end{thm}

\begin{proof}
(1) Recall $\Phi(\mu)$ is given in Theorem \ref{thm: IPM}. 
Combining \eqref{losob0} and \cite[Theorem 2.6(2)]{W2020}, there exist constants $c_1,\lambda_1>0$ independent of $\mu$ such that
\begin{align} \label{RR3}
\W_2((\tilde{P}_t^\mu)^\ast\nu,\Phi(\mu))\leq c_1\e^{-\lambda_1 t}\W_2(\nu,\Phi(\mu)),\ \ \mu\in\hat{\scr P}_{2,M}(\R^d), \nu\in\scr P_2(\R^d), t\geq 0.
\end{align}
Define $$\delta_0:=\left(\inf_{t>\frac{\log c_1}{\lambda_1}}\frac{\sqrt{\frac{1}{K_1}(\e^{K_1t}-1)}}{1-c_1\e^{-\lambda_1 t}
}\right)^{-2}.$$
By taking advantage of \cite[Proposition 2.2]{ZSQ} and \eqref{Mon1}, when $K_I<\min(K_2,\delta_0)$, \eqref{EKy} has a unique invariant probability measure $\mu_\infty\in\hat{\scr P}_{2,M}(\R^d)$. For reader's convenience, we give more details.
For $\mu_1,\mu_2\in\scr P_2(\R^d)$, let
$\tilde{X}_t^i,i=1,2$ solve
\begin{align*}
\d \tilde{X}_{t}^{i}=b(\tilde{X}_{t}^{i},\mu_i)\d t+\sigma(\mu_i)\d W_t
\end{align*}
with $\tilde{X}_0^1=\tilde{X}_0^2$ satisfying $\L_{\tilde{X}_0^1}=\Phi(\mu_1)$. Then it holds $\L_{\tilde{X}_t^1}=\Phi(\mu_1)$ and $\L_{\tilde{X}_t^2}=(\tilde{P}_t^{\mu_2})^\ast(\Phi(\mu_1))$. It follows from \eqref{Mon1} and Gronwall's inequality that
\begin{align}\label{kgy}
\W_2((\tilde{P}_t^{\mu_2})^\ast(\Phi(\mu_1)),\Phi(\mu_1))^2\leq \E|\tilde{X}_{t}^{1}-\tilde{X}_{t}^{2}|^2\leq \int_0^tK_I\e^{K_1(t-s)}\W_2(\mu_1,\mu_2)^2\d s,
\end{align}
which together with \eqref{RR3} for $\mu=\mu_2,\nu=\Phi(\mu_1)$ implies
\begin{align*}
\W_2(\Phi(\mu_1),\Phi(\mu_2))\leq c_1\e^{-\lambda_1 t}\W_2(\Phi(\mu_1),\Phi(\mu_2))+\sqrt{K_I\frac{1}{K_1}(\e^{K_1t}-1)}\W_2(\mu_1,\mu_2).
\end{align*}
Hence, combining this with Theorem \ref{logta0} implies that when $K_I<\min\{K_2,\delta_0\}$, $\Phi$ is a contractive map on $\hat{\scr P}_{2,M}(\R^d)$ and has a unique fixed point $\mu_\infty$ satisfying $\Phi(\mu_\infty)=\mu_\infty$, which means that \eqref{EKy} has a unique IPM $\mu_\infty\in \hat{\scr P}_{2,M}(\R^d)$. So, the log-Sobolev inequality \eqref{losob01} follows from \eqref{losob0}.
%

(2) Next, we prove the exponential ergodicity in $\W_2$.
Let
$X_t^i,i=1,2$ solve
\begin{align*}\
\d X_{t}^{1}=b(X_{t}^{1},P_t^\ast\mu_0)\d t+\sigma(P_t^\ast\mu_0)\d W_t,
\end{align*}
and
\begin{align*}
\d X_{t}^{2}=b(X_{t}^{2},\mu_\infty)\d t+\sigma(\mu_\infty)\d W_t
\end{align*}
with $X_0^1=X_0^2$ satisfying $\L_{X_0^1}=\mu_0$. Similar to \eqref{kgy}, It\^{o}'s formula and \eqref{Mon1} imply that
\begin{align*}\W_2(P_t^\ast\mu_0,(\tilde{P}_t^{\mu_\infty})^\ast\mu_0)^2&\leq \E|X_{t}^{1}-X_{t}^{2}|^2\\
&\leq\int_0^t\e^{K_1(t-s)}K_I\W_2(P_s^\ast\mu_0,\mu_\infty)^2\d s\leq \e^{K_1t}K_I\int_0^t\W_2(P_s^\ast\mu_0,\mu_\infty)^2\d s.
\end{align*}
This, together with \eqref{RR3} for $\mu=\mu_\infty,\nu=\mu_0$,  yields that
\begin{align*}\W_2(P_t^\ast\mu_0,\mu_\infty)^2&
\leq 2\W_2(P_t^\ast\mu_0,(\tilde{P}_t^{\mu_\infty})^\ast\mu_0)^2+2\W_2((\tilde{P}_t^{\mu_\infty})^\ast\mu_0,\mu_\infty)^2\\
&\leq 2\e^{K_1t}K_I\int_0^t \W_2(P_s^\ast\mu_0,\mu_\infty)^2\d s +2c_1^2\e^{-2\lambda_1 t}\W_2(\mu_0,\mu_\infty)^2.
\end{align*}
Gronwall's inequality yields
\begin{align}\label{w2c}
\nonumber&\W_2(P_t^\ast\mu_0,\mu_\infty)^2\\
&\leq \left(\int_0^t2c_1^2\e^{-2\lambda_1 s}2\e^{K_1s}K_I\e^{2\e^{K_1t}K_I(t-s)}\d s +2c_1^2\e^{-2\lambda_1 t}\right)\W_2(\mu_0,\mu_\infty)^2.
\end{align}
Take
$$\delta_1=\sup\left\{K_I>0:\inf_{t>\frac{\log(2c_1^2)}{2\lambda_1}}\int_0^t2c_1^2\e^{-2\lambda_1 s}2\e^{K_1s}K_I\e^{2\e^{K_1t}K_I(t-s)}\d s +2c_1^2\e^{-2\lambda_1 t}<1\right\}.$$
So, when $K_I<\min(K_2,\delta_0,\delta_1)$, we can choose $\hat{t}>0$ and $\alpha\in(0,1)$ such that
\begin{align}\label{CMYke}\W_2(P_{\hat{t}}^\ast\mu_0,\mu_\infty)\leq \alpha\W_2(\mu_0,\mu_\infty).
\end{align}
By the semigroup property $P_{t+s}^\ast= P_{t}^\ast P_{s}^\ast$ and \eqref{w2c}-\eqref{CMYke}, we get \eqref{Wcove}.

(3) Finally, assuming $\sigma(x,\gamma)=\sigma$ and \eqref{stmon}, the log-Harnack inequality holds by \cite{FYW1}, i.e.
\begin{align}\label{ktl}
\nonumber&(P_t^\ast\mu_0) (\log f)\\
&\leq \log (P_t^\ast\nu_0) (f)+\eta(t)\W_2(\mu_0,\nu_0)^2, \ \ t>0,\mu_0,\nu_0\in\scr P_2(\R^{d}), 1<f\in\scr B_b(\R^d)
\end{align}
holds for some measurable function $\eta:(0,\infty)\to(0,\infty)$.
Moreover, the Talagrand inequality also holds due to the log-Sobolev inequality \eqref{losob01}, see for instance \cite{OV}, which together with \eqref{ktl} and \eqref{Wcove} implies \eqref{Wcoveent}, see for instance \cite{RW} for more details.
\end{proof}

\vspace{0.5em}
\begin{exa}\label{exabc} Let $V(x)=-|x|^4+|x|^2, x\in\R^d$, $W\in C^2(\R^d;\R)$, $U\in C^2(\R^d;\R^d)$ with $\nabla U$ being bounded and there exists a constant $K_I>0$ such that
$$|\nabla W(x)-\nabla W(y)|+|\nabla U(x)-\nabla U(y)|\leq K_I|x-y|,\ \ x,y\in\R^d.$$
 Set
\begin{align*}
b(x,\mu)=\nabla V(x)+\int_{\R^d}\nabla W(x-y)\mu(\d y)~ \mbox{ and } ~ \sigma(\mu)=\sqrt{I_{d\times d}+\int_{\R^d}(\nabla U(\nabla U)^\ast)(y)\mu(\d y)}.
\end{align*}
Then {\bf(H)} holds and if $K_I$ is small enough, the assertions in Theorem \ref{Erg} hold.
\end{exa}

\vspace{0.2em}
\begin{rem} If $\sigma$ also depends on the spatial variable and the Bakry-\'{E}mery curvature condition holds, i.e.
\begin{align}\label{BEC}\Gamma_2^\mu(f)\geq K|\sigma^\ast\nabla f|^2,\ \ f\in C_0^\infty(\R^d),\mu\in\scr P_2(\R^d)
\end{align}
for some constant $K\in\R$,
where $$L^\mu=\<b(\cdot,\mu), \nabla \>+\frac{1}{2}\mathrm{tr}((\sigma\sigma^\ast)(\cdot,\mu)\nabla^2),$$
and $$\Gamma_2^\mu(f):=\frac{1}{2}L^\mu(|\sigma^\ast\nabla f|^2)-\<\sigma^\ast\nabla f, \sigma^\ast\nabla L^\mu f\>,$$
then \eqref{tillo} is true, see for instance \cite{Bakry,Hsu}.
Hence, the assertions in Theorem \ref{logta0} and Theorem \ref{Erg} also hold by replacing $\sigma(x,\gamma)=\sigma(\gamma)$ with \eqref{BEC}. However, \eqref{BEC} is not easy to check.
\end{rem}
\section{Degenerate case}
In this section, we consider
\begin{align}\label{Deg}\left\{
  \begin{array}{ll}
    \d X_t=Y_t\d t,  \\
    \d Y_t=b_t(X_t,Y_t,\L_{(X_t,Y_t)})\d t+\sigma_t(\L_{(X_t,Y_t)})\d W_t,
  \end{array}
\right.
\end{align}
where $b:[0,\infty)\times\R^{2d}\times \scr P(\R^{2d})\to\R^d$, $\sigma:[0,\infty)\times\scr P(\R^{2d})\to\R^d\otimes\R^n$ are measurable and bounded on bounded sets and $\{W_t\}_{t\geq 0}$ is an $n$-dimensional standard Brownian motion on some complete filtered probability space $(\Omega, \scr F, (\scr F_t)_{t\geq 0},\P)$. As said above, when \eqref{Deg} is well-posed in $\scr P_2(\R^{2d})$, we use $P_t^\ast \mu_0$ to denote the distribution of the solution to \eqref{Deg} with initial distribution $\mu_0\in\scr P_2(\R^{2d})$. For any $\mu_\cdot\in C([0,\infty),\scr P_2(\R^{2d}))$, we also consider the time-inhomogeneous decoupled SDEs:
\begin{align}\label{Degde}\left\{
  \begin{array}{ll}
    \d X_t^\mu=Y_t^\mu\d t,  \\
    \d Y_t^\mu=b_t(X_t^\mu,Y_t^\mu,\mu_t)\d t+\sigma_t(\mu_t)\d W_t.
  \end{array}
\right.
\end{align}
We make the following assumptions
\begin{enumerate}
\item[{\bf(B)}]
$b$ is bounded on bounded sets of $[0,\infty)\times \R^{2d}\times \scr P_2(\R^{2d})$. For any $t\geq 0$ and $\gamma\in\scr P_2(\R^{2d})$, $b_t(\cdot,\gamma)$ is continuous in $\R^{2d}$. There exist  constants $K_1,\tilde{K}_I>0$ such that for all $t\geq 0$, $x,\tilde{x},y,\tilde{y}\in\R^d$ and $\gamma,\tilde\gamma\in\scr P_2(\R^{2d})$,
\begin{align*}&\nonumber2\<b_t(x,y,\gamma)-b_t(\tilde{x},\tilde{y},\tilde\gamma),y-\tilde{y}\> +2\<y-\tilde{y},x-\tilde{x}\>+\|\sigma_t(\gamma)-\sigma_t(\tilde\gamma)\|_{HS}^2\\
&\leq K_1(|x-\tilde{x}|^2+|y-\tilde{y}|^2)+\tilde{K}_I\W_2(\gamma,\tilde\gamma)^2,
\end{align*}
and
$$\|\sigma_t(\gamma)\|_{HS}^2\leq K_1(1+\gamma(|\cdot|^2)).$$
\end{enumerate}
 Let
$$B_t(x,y,\gamma)=\left(
             \begin{array}{c}
               y \\
               b_t(x,y,\gamma) \\
             \end{array}
           \right), \ \ \Sigma_t(\gamma)=\left(
                                           \begin{array}{c}
                                             0_{d\times n} \\
                                             \sigma_t(\gamma) \\
                                           \end{array}
                                         \right),\ \ x,y\in\R^d,\gamma\in\scr P(\R^{2d}).
$$
Let $Z_t=\left(
               \begin{array}{c}
                 X_t \\
                 Y_t \\
               \end{array}
             \right)
$ and $Z_t^\mu=\left(
               \begin{array}{c}
                 X_t^\mu \\
                 Y_t^\mu \\
               \end{array}
             \right)
$. Then \eqref{Deg} and \eqref{Degde} can be rewritten as
$$\d Z_t=B_t(Z_t,\L_{Z_t})+\Sigma_t(\L_{Z_t})\d W_t$$
and
$$\d Z_t^\mu=B_t(Z_t^\mu,\mu_t)+\Sigma_t(\mu_t)\d W_t.$$

So, under {\bf(B)}, Theorem \ref{wel} implies that \eqref{Deg} is well-posed in $\scr P_2(\R^{2d})$ and \eqref{Degde} is also well-posed.
Let $P_{s,t}^\mu$ be the semigroup associated to \eqref{Degde}.

For a differentiable function $f$ on $\R^d\times\R^d$, $i=1,2$, let $\nabla^{(i)}f(x_1,x_2)$ denote the gradient of $f$ along $x_i$.
\subsection{Log-Sobolev inequality for $P_{s,t}^\mu$ and $P_t^\ast\mu_0$}

The following result is a corollary of Theorem \ref{Poi3}.
\begin{thm}\label{Poi23} Assume {\bf(B)} and there exists a constant $\delta_1>0$ such that
\begin{align}\label{upb}\sigma_t(\gamma)\sigma_t(\gamma)^\ast\leq \delta_1I_{d\times d},\ \ \gamma\in\scr P_2(\R^{2d}), t\geq 0.
\end{align}
Then the assertions in Theorem \ref{Poi3} hold for \eqref{Deg} and \eqref{Degde} on $\R^{2d}$ replacing \eqref{E1} and \eqref{E2} on $\R^{d}$.
\end{thm}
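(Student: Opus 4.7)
The plan is to recast \eqref{Deg} and \eqref{Degde} as $\R^{2d}$-valued SDEs driven by $W_t$ with drift $B_t$ and diffusion $\Sigma_t$ as introduced just above the theorem, and then to apply Theorem \ref{Poi3} with dimension $2d$ in place of $d$. The key observation making this reduction viable is twofold: Theorem \ref{Poi3} requires only an upper bound on $\sigma\sigma^\ast$ (namely \eqref{ups}) and not ellipticity, and $\Sigma_t(\gamma)$ depends only on $\gamma$, so the structural assumption $\sigma_t(x,\gamma)=\sigma_t(\gamma)$ is automatically met.

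Verifying the hypotheses of Theorem \ref{Poi3} for $(B,\Sigma)$ on $\R^{2d}\times\scr P_2(\R^{2d})$ amounts to routine bookkeeping. Writing $z=(x,y)$, $\tilde z=(\tilde x,\tilde y)\in\R^{2d}$, I would expand
\begin{align*}
2\<B_t(z,\gamma)-B_t(\tilde z,\tilde\gamma),z-\tilde z\>
&= 2\<y-\tilde y,x-\tilde x\>\\
&\quad + 2\<b_t(x,y,\gamma)-b_t(\tilde x,\tilde y,\tilde\gamma),y-\tilde y\>.
\end{align*}
Adding $\|\Sigma_t(\gamma)-\Sigma_t(\tilde\gamma)\|_{HS}^2=\|\sigma_t(\gamma)-\sigma_t(\tilde\gamma)\|_{HS}^2$ and invoking the monotonicity bound in {\bf(B)} yields exactly $K_1|z-\tilde z|^2+\tilde K_I\W_2(\gamma,\tilde\gamma)^2$, matching \eqref{Mon} with $K_I=\tilde K_I$. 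The local boundedness of $B$, spatial continuity of $B_t(\cdot,\gamma)$ on $\R^{2d}$ (the $y$-component being linear), and the growth bound $\|\Sigma_t(\gamma)\|_{HS}^2=\|\sigma_t(\gamma)\|_{HS}^2\leq K_1(1+\gamma(|\cdot|^2))\leq K_1(1+|z|^2+\gamma(|\cdot|^2))$ all descend directly from {\bf(B)}.

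Condition \eqref{ups} on $\R^{2d}$ holds because $\Sigma_t(\gamma)\Sigma_t(\gamma)^\ast$ is block-diagonal with a zero $d\times d$ block and second block $\sigma_t(\gamma)\sigma_t(\gamma)^\ast\leq \delta_1 I_{d\times d}$, whence $\Sigma_t(\gamma)\Sigma_t(\gamma)^\ast\leq \delta_1 I_{2d\times 2d}$. With {\bf(A)} and \eqref{ups} both verified for $(B,\Sigma)$, Theorem \ref{Poi3} applies directly and delivers the log-Sobolev inequality \eqref{til} for the semigroup $P_{s,t}^\mu$ of \eqref{Degde} and the inequality \eqref{rlo} for $P_t^\ast\gamma$, with the gradient $|\nabla f|^2$ interpreted as $|\nabla^{(1)}f|^2+|\nabla^{(2)}f|^2$ on $\R^{2d}$. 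There is no genuine obstacle in this argument; the entire content of Theorem \ref{Poi23} is the observation that the degenerate kinetic structure fits within the framework of Theorem \ref{Poi3} precisely because the latter tolerates a rank-deficient diffusion matrix.
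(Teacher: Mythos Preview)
Your proposal is correct and is precisely the approach the paper takes: the paper states Theorem \ref{Poi23} as a corollary of Theorem \ref{Poi3} without further proof, relying on the rewriting of \eqref{Deg} and \eqref{Degde} via $(B,\Sigma)$ on $\R^{2d}$ introduced immediately beforehand. Your verification that {\bf(A)} and \eqref{ups} hold for $(B,\Sigma)$ is the routine bookkeeping the paper leaves implicit, and your remark that Theorem \ref{Poi3} requires only the upper bound \eqref{ups} and not ellipticity is exactly the point that makes the degenerate case fall under the non-degenerate framework.
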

\begin{rem} It is not difficult to see that the log-Sobolev inequality \eqref{til} does not hold if $\nabla $ is replaced by $\nabla^{(2)}$. This is essentially different from the non-degenerate case. In fact, the inequality
\begin{align*}|\nabla P_{s,t}^\mu f|\leq \e^{\frac{K_1}{2}(t-s)}P_{s,t}^\mu |\nabla f|, \ \ f\in C_b^\infty(\R^{2d}), 0\leq s\leq t
\end{align*}
does not hold if $\nabla $ is replaced by $\nabla^{(2)}$.
\end{rem}
\subsection{Time-homogeneous decoupled SDEs}

From now on, we consider the time-homogeneous case, i.e. consider
\begin{align}\label{Degth}\left\{
  \begin{array}{ll}
    \d X_t=Y_t\d t,  \\
    \d Y_t=b(X_t,Y_t,\L_{(X_t,Y_t)})\d t+\sigma(\L_{(X_t,Y_t)})\d W_t.
  \end{array}
\right.
\end{align}
For any $\mu\in\scr P_2(\R^{2d})$, we first study time-homogeneous decoupled SDEs
\begin{align}\label{tih12}\left\{
  \begin{array}{ll}
    \d \tilde{X}_t^\mu=\tilde{Y}_t^\mu\d t,  \\
    \d \tilde{Y}_t^\mu=b(\tilde{X}_t^\mu,\tilde{Y}_t^\mu,\mu)\d t+\sigma(\mu)\d W_t.
  \end{array}
\right.
\end{align}
Let $\tilde{P}_t^\mu$ be the associated semigroup to \eqref{tih12} and $(\tilde{P}_t^\mu)^\ast\gamma$ stand for the distribution of the solution to \eqref{tih12} with initial distribution $\gamma\in\scr P_2(\R^{2d})$.
To derive the hyperboundedness for time-homogenenous decoupled SDE \eqref{tih12}, we will adopt Wang's Harnack inequality. Note that in the present degenerate case, the Lipschitz continuity of $b$ in the spatial variable is required to ensure Wang's Harnack inequality so that we present the following condition.
\begin{enumerate}
\item[{\bf(C)}] There exist constants $K_M>0,K_I>0$ such that for all $z,\bar{z}\in\R^{2d}, \gamma,\tilde{\gamma}\in\scr P_2(\R^{2d})$,
\begin{align}\label{Lipsc}|b(z,\gamma)-b(\bar{z},\tilde{\gamma})|+\|\sigma(\gamma)-\sigma(\tilde{\gamma})\|_{HS} \leq K_M|z-\bar{z}|+K_I\W_2(\gamma,\tilde{\gamma}).
\end{align}
Moreover, there exist constants $\theta,r,R\geq 0, $ and $r_0\in(-1,1)$ such that for any $x,y,\bar{x},\bar{y}\in\R^{d}$ satisfying $|x-\bar{x}|^2+|y-\bar{y}|^2\geq R^2$ and $\mu\in\scr P_2(\R^{2d})$,
\begin{align}\label{Patdi}
\nonumber&\<r^2(x-\bar{x})+rr_0(y-\bar{y}),y-\bar{y}\>\\
&+\<(y-\bar{y})+rr_0(x-\bar{x}),b(x,y,\mu)-b(\bar{x},\bar{y},\mu)\>\\
\nonumber&\leq -\theta(|x-\bar{x}|^2+|y-\bar{y}|^2).
\end{align}
\end{enumerate}
\eqref{Patdi} with $R=0$ means uniform dissipativity while \eqref{Patdi} with $R>0$ becomes partial dissipativity.
Let $$\psi((x,y),(\bar{x},\bar{y}))=\sqrt{r^2|x-\bar{x}|^2/2+|y-\bar{y}|^2/2+rr_0\<x-\bar{x},y-\bar{y}\>},\ \ (x,y),(\bar{x},\bar{y})\in\R^{2d}.$$
Then there exists a constant $C_\psi>1$ depending on $r,r_0$ such that for any $(x,y),(\bar{x},\bar{y})\in\R^{2d}$,
\begin{align}\label{cpsid}C_\psi^{-1}(|x-\bar{x}|^2+|y-\bar{y}|^2)\leq \psi((x,y),(\bar{x},\bar{y}))^2\leq C_\psi(|x-\bar{x}|^2+|y-\bar{y}|^2).
\end{align}

\subsubsection{Invariant probability measure}
 In this part, we investigate the existence, uniqueness and concentration of the invariant probability measure to \eqref{tih12}.
\begin{thm}\label{fixdi} Assume {\bf (C)} and there is a constat $\delta_2>0$ such that \begin{align}\label{lowbo}\sigma(\gamma)\sigma^\ast(\gamma)\geq \delta_2I_{d\times d},\ \ \gamma\in\scr P_2(\R^{2d}).
\end{align}
\begin{enumerate}
\item[$(1)$]
Then for any $\mu\in\scr P_2(\R^{2d})$, \eqref{tih12} has a unique invariant probability measure denoted by $\Phi(\mu)$.
\item[$(2)$] Moreover, there exists a constant $\eta_0>0$ such that when $K_I< \eta_0$, we can find a constant $N>0$ such that
$\Phi$ is a map on $\hat{\scr P}_{2,N}(\R^{2d})$ with
$$\hat{\scr P}_{2,N}(\R^{2d}):=\left\{\mu\in\scr P_2(\R^{2d}): \|\mu\|_2^2\leq  N \right\}.$$
\item[$(3)$] In addition, there exist constants $\vv>0,C_0>0$ such that \begin{align}\label{emo3}(\Phi(\mu))(\e^{\vv|\cdot|^2})<C_0, \ \ \mu\in\hat{\scr P}_{2,N}(\R^{2d}).
\end{align}
\end{enumerate}
\end{thm}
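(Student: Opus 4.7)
The plan is to adapt the argument of Theorem~\ref{thm: IPM} to the hypoelliptic setting by replacing the Euclidean norm with the twisted quadratic form $\psi(\cdot,\mathbf{0})^2 = r^2|x|^2/2 + |y|^2/2 + rr_0\<x,y\>$ that is built into assumption {\bf(C)}. This is the standard hypocoercivity Lyapunov trick: because the noise only hits the $Y$-component, the bare Euclidean norm gives no dissipation in the $x$-direction, but the cross term $rr_0\<x,y\>$ in $\psi^2$ creates an effective drift from $y$ to $x$ that restores dissipativity, which is exactly what \eqref{Patdi} quantifies.

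\textbf{Existence (assertion (1), first half).} Taking $(\bar x,\bar y)=\mathbf 0$ in \eqref{Patdi} and using $|x|^2+|y|^2\geq R^2 \Rightarrow \psi^2 \geq C_\psi^{-1} R^2$, together with \eqref{Lipsc} to bound $|b(\mathbf 0,\mu)|\le |b(\mathbf 0,\delta_{\mathbf 0})|+K_I\|\mu\|_2$ and $\|\sigma(\mu)\|_{HS}^2\le 2\|\sigma(\delta_{\mathbf 0})\|_{HS}^2+2K_I^2\|\mu\|_2^2$, the generator $\tilde L^\mu$ of \eqref{tih12} satisfies a Lyapunov drift inequality
\[
\tilde L^\mu \psi(\cdot,\mathbf 0)^2 \leq -\theta\,\psi(\cdot,\mathbf 0)^2 + C_0(1+K_I^2\|\mu\|_2^2),
\]
valid outside a compact set and controlled by the local boundedness of $b$ inside it. An application of the Krylov--Bogoliubov scheme (or \cite[Lemma 2.7(1)]{ZSQ1}) yields existence of an IPM.

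\textbf{Uniqueness (assertion (1), second half).} Here the dissipation is insufficient and one invokes a Wang--Harnack inequality for the kinetic SDE \eqref{tih12}. Under {\bf(C)} together with the non-degeneracy \eqref{lowbo}, the Harnack inequality in the degenerate setting (cf.\ \cite[Theorem~1.1]{W2017} or the proof of Lemma~\ref{GTY} adapted to the degenerate case via a twisted coupling using $\psi$) gives strong Feller plus irreducibility, so that \cite[Theorem~1.4.1(3)]{Wbook} provides uniqueness of the IPM $\Phi(\mu)$, exactly mirroring the corresponding step in Theorem~\ref{thm: IPM}(1).

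\textbf{Invariant set (assertion (2)).} Apply It\^o's formula to $\psi((\tilde X_t^\mu,\tilde Y_t^\mu),\mathbf 0)^2$. Combining the partial dissipativity above with \eqref{cpsid} gives
\[
\frac{\d}{\d t}\E\psi^2 \leq -\tfrac{\theta}{C_\psi}\,\E\psi^2 + C_0(1+K_I^2\|\mu\|_2^2).
\]
Passing to the invariant measure and using \eqref{cpsid} once more,
\[
\Phi(\mu)(|\cdot|^2)\leq C_\psi\,\Phi(\mu)(\psi^2) \leq \tfrac{C_\psi^2 C_0}{\theta}(1+K_I^2\|\mu\|_2^2).
\]
For $K_I< \eta_0$ small enough, the map $\|\mu\|_2^2\mapsto \tfrac{C_\psi^2 C_0}{\theta}(1+K_I^2\|\mu\|_2^2)$ has a fixed point $N_\ast$, and any $N\ge N_\ast$ yields invariance of $\hat{\scr P}_{2,N}(\R^{2d})$ under $\Phi$.

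\textbf{Exponential moment (assertion (3)).} Finally, applying It\^o to $\e^{\vv\psi(\cdot,\mathbf 0)^2}$ with $\vv$ small (so that the It\^o correction $\vv^2\|\Sigma^\ast\nabla\psi^2\|^2\le\vv^2 C\delta_1\psi^2$ is dominated by the $-\vv\theta\psi^2$ drift term), one obtains, uniformly in $\mu\in\hat{\scr P}_{2,N}(\R^{2d})$,
\[
\tilde L^\mu \e^{\vv\psi^2} \leq c_1 - c_2\,\e^{\vv\psi^2},
\]
and then \eqref{emo3} follows by time-averaging and passing to the limit via the tightness argument at the end of the proof of Theorem~\ref{thm: IPM}(2). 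The main technical obstacle is obtaining the Wang--Harnack inequality with constants uniform in the frozen measure $\mu\in\hat{\scr P}_{2,N}(\R^{2d})$ in the degenerate case, which requires a carefully chosen coupling time and control of the cost function along the $\psi$-geometry rather than the Euclidean one.
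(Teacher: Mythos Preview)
Your proposal is correct and follows essentially the same route as the paper: the twisted Lyapunov function $\psi(\cdot,\mathbf 0)^2$ for existence and the moment bounds, and Wang's Harnack inequality plus \cite[Theorem~1.4.1(3)]{Wbook} for uniqueness, exactly as in the non-degenerate Theorem~\ref{thm: IPM}.

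One clarification: you overstate the difficulty of the Harnack step for \emph{this} theorem. The paper obtains \eqref{Hap} by a direct citation of \cite[Lemma~3.2]{FYW2017}, which already gives a constant $c_0$ independent of $\mu$ because the inputs (Lipschitz constant $K_M$ of $b$, lower bound $\delta_2$ on $\sigma\sigma^\ast$) are themselves uniform in $\mu$ under {\bf(C)} and \eqref{lowbo}. No $\psi$-geometry coupling is needed here; the refinement you describe---combining the basic Harnack with a $\psi$-contraction estimate to get decay in $t$---is precisely what the paper does in the \emph{next} result (the hyperboundedness estimate \eqref{Hap13}), not in Theorem~\ref{fixdi} itself. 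Also, your use of $\delta_1$ in part~(3) is a slip of notation: no upper ellipticity bound is assumed here, but $\|\sigma(\mu)\|_{HS}$ is controlled for $\mu\in\hat{\scr P}_{2,N}$ via \eqref{Lipsc}, which is what the paper uses.
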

\begin{proof}
(1) Firstly, it follows from \eqref{Lipsc} that
\begin{align}\label{gromu}
|b(0,\mu)|+\|\sigma(\mu)\|_{HS}\leq K_I\|\mu\|_2+|b(0,\delta_0)|+\|\sigma(\delta_0)\|_{HS},\ \ \mu\in\scr P_2(\R^{2d}).
\end{align}
Let $$\tilde{\psi}(x,y)=\psi((x,y),0)=\sqrt{r^2|x|^2/2+|y|^2/2+rr_0\<x,y\>},\ \ (x,y)\in\R^{2d}.$$
It follows from It\^{o}'s formula, \eqref{Patdi} and \eqref{gromu} that for some positive constants $c_1,c_2,c_3$,
\begin{align}\label{sec3}\nonumber\d \tilde{\psi}(\tilde{X}_t^\mu, \tilde{Y}_t^\mu)^2&=\<r^2\tilde{X}_t^\mu+rr_0\tilde{Y}_t^\mu,\tilde{Y}_t^\mu\>\d t+\<\tilde{Y}_t^\mu+rr_0\tilde{X}_t^\mu,b(\tilde{X}_t^\mu,\tilde{Y}_t^\mu,\mu)\>\d t\\
\nonumber&\qquad\quad+\frac{1}{2}\|\sigma(\mu)\|_{HS}^2\d t+\<\tilde{Y}_t^\mu+rr_0\tilde{X}_t^\mu,\sigma(\mu)\d W_t\>\\
&\leq -\theta(|\tilde{X}_t^\mu|^2+|\tilde{Y}_t^\mu|^2)\d t+\left(\theta+\frac{r^2}{2}+|rr_0|+K_M(1+|rr_0|)\right)R^2\d t\\
&\nonumber\qquad\quad+\<\tilde{Y}_t^\mu+rr_0\tilde{X}_t^\mu,b(0,\mu)\>\d t+\frac{1}{2}\|\sigma(\mu)\|_{HS}^2\d t+\<\tilde{Y}_t^\mu+rr_0\tilde{X}_t^\mu,\sigma(\mu)\d W_t\>\\
\nonumber&\leq -c_2(|\tilde{X}_t^\mu|^2+|\tilde{Y}_t^\mu|^2)\d t+c_1\d t+c_3K_I^2\|\mu\|_2^2\d t+\<\tilde{Y}_t^\mu+rr_0\tilde{X}_t^\mu,\sigma(\mu)\d W_t\>.
\end{align}
So, we arrive at
\begin{align}\label{Emp3}
\frac{1}{t}\int_0^t\|(\tilde{P}_s^\mu)^\ast\delta_0\|_2^2\d s\leq  (c_1+c_3K_I^2\|\mu\|_2^2)/{c_2}.
\end{align}
By the standard argument of tightness, we derive from \eqref{Emp3} that \eqref{tih12} has an invariant probability measure  denoted by $\Phi(\mu)$, which satisfies
\begin{align}\label{Pit3}\|\Phi(\mu)\|_2^2\leq (c_1+c_3K_I^2\|\mu\|_2^2)/{c_2}.
\end{align}
Moreover, by \eqref{lowbo}, \eqref{Lipsc} and \cite[Lemma 3.2]{FYW2017}, Wang's Harnack inequality for $\tilde{P}_t^\mu$ with power $2$ holds, i.e. for any $t>0$, there exists a constant $c_0>0$ independent of $\mu$ such that
\begin{align}\label{Hap}
(\tilde{P}_t^\mu f)^2(z)\leq \tilde{P}_t^\mu f^2(\bar{z})\e^{c_0|z-\bar{z}|^2}, \ \ f\in\scr B_b^+(\R^{2d}), z,\bar{z}\in\R^{2d}, \mu\in\scr P_2(\R^{2d}).
\end{align}
This implies the uniqueness of the invariant probability measure of \eqref{tih12}, see for instance \cite[Theorem 1.4.1(3)]{Wbook}.

(2) Next let $K_I<\sqrt{\frac{c_2}{c_3}}$. For any $\mu\in\scr P_2(\R^{2d})$ with $\|\mu\|_2^2\leq c_1(c_2-c_3K_I^2)^{-1}$, it follows from \eqref{Pit3} that
\begin{align*}\|\Phi(\mu)\|_2^2\leq  (c_1+c_3K_I^2\|\mu\|_2^2)/{c_2}\leq  c_1(c_2-c_3K_I^2)^{-1}.
\end{align*}
Let $N=c_1(c_2-c_3K_I^2)^{-1}$. This means that $\Phi$ is a map on $\hat{\scr P}_{2,N}(\R^{2d})$ with
$$\hat{\scr P}_{2,N}(\R^{2d}):=\left\{\mu\in\scr P_2(\R^{2d}): \|\mu\|_2^2\leq N\right\}.$$

(3) Finally, we study the concentration of $\Phi(\mu)$ for any $\mu\in\hat{\scr P}_{2,N}(\R^{2d})$.
By \eqref{sec3}, \eqref{gromu} and It\^{o}'s formula for $\e^{\vv\tilde{\psi}(\tilde{X}_t^\mu,\tilde{Y}_t^\mu)^2}$, for any $\mu\in\hat{\scr P}_{2,N}(\R^{2d})$, we derive
\begin{align*}&\d \e^{\vv\tilde{\psi}(\tilde{X}_t^\mu,\tilde{Y}_t^\mu)^2}\\
&\leq\vv\e^{\vv\tilde{\psi}(\tilde{X}_t^\mu,\tilde{Y}_t^\mu)^2}\left(-c_2(|\tilde{X}_t^\mu|^2+|\tilde{Y}_t^\mu|^2)\d t+c_1\d t+c_3K_I^2\|\mu\|_2^2\d t+\<\tilde{Y}_t^\mu+rr_0\tilde{X}_t^\mu,\sigma(\mu)\d W_t\>\right)\\
&\quad+\frac{1}{2}\vv^2\e^{\vv\tilde{\psi}(\tilde{X}_t^\mu,\tilde{Y}_t^\mu)^2}| \sigma(\mu)^\ast[\tilde{Y}_t^\mu+rr_0\tilde{X}_t^\mu]|^2\\
&\leq\vv\e^{\vv\tilde{\psi}(\tilde{X}_t^\mu,\tilde{Y}_t^\mu)^2}\left((-c_2+\vv c_4)(|\tilde{X}_t^\mu|^2+|\tilde{Y}_t^\mu|^2)\d t+c_5\d t+\<\tilde{Y}_t^\mu+rr_0\tilde{X}_t^\mu,\sigma(\mu)\d W_t\>\right)
\end{align*}
for some positive constants $c_4,c_5$.
When $\vv>0$ is small enough, we can find constants $c_6(\vv), c_7(\vv)>0$ such that
\begin{align*} &\vv\e^{\vv\tilde{\psi}(\tilde{X}_t^\mu,\tilde{Y}_t^\mu)^2}\left((-c_2+\vv c_4)(|\tilde{X}_t^\mu|^2+|\tilde{Y}_t^\mu|^2)+c_5\right)\leq c_6(\vv)-c_7(\vv)\e^{\vv\tilde{\psi}(\tilde{X}_t^\mu,\tilde{Y}_t^\mu)^2}.
\end{align*}
This together with \eqref{cpsid} yields
$\frac{1}{t}\int_0^t \big(\tilde{P}_s^\mu)^\ast \delta_0\big)wo fangni (\e^{\vv C_{\psi}^{-1} |\cdot|^2})\d s
\leq \frac{c_6(\vv)t+1}{c_7(\vv)t}.
$
Thus, \eqref{emo3} is derived by following the line to derive
 \eqref{Pit3}.
\end{proof}

\subsubsection{Wang's Harnack inequality and hyperboundedness}
The Harnack inequality \eqref{Hap} is not as sharp as the one in \eqref{niHar}. In the following theorem, we will first derive a sharper Wang's Harnack inequality, which implies the hyperboundedness of $\tilde{P}_t^{\mu}$ for large enough $t$. Note that the proof of \eqref{Hap13} is different from that of \eqref{niHar}.

\begin{thm}Assume  {\bf(C)} and \eqref{lowbo}.
Then \begin{align}\label{Hap13}
\nonumber&(\tilde{P}_t^\mu f)^2(z)
\leq \tilde{P}_t^\mu f^2(\bar{z})\\
&\quad\times\e^{c_0C_1\frac{C_\psi^2}{\theta}+c_0C_\psi^2\e^{-\frac{\theta}{C_\psi} (t-1)}|z-\bar{z}|^2}, \ \ t\geq 1, f\in\scr B_b^+(\R^{2d}), z,\bar{z}\in\R^{2d}, \mu\in\scr P_2(\R^{2d})
\end{align}
holds for some constant $C_1>0$ depending on $r,r_0,\theta,R, K_M$.
Consequently, if $K_I$ is small enough,  then there exist constants $t_0>0,\tilde{C}>0$ such that
\begin{align}\label{hyper3}
\|\tilde{P}_{t_0}^\mu\|_{L^2(\Phi(\mu))\to L^{4}(\Phi(\mu))}\leq \tilde{C}, \ \ \mu\in\hat{\scr P}_{2,N}(\R^{2d}).
\end{align}
\end{thm}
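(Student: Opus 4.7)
The plan is to derive the Wang Harnack inequality \eqref{Hap13} via a synchronous coupling combined with the time-$1$ Harnack inequality \eqref{Hap} already known from \cite[Lemma 3.2]{FYW2017}, and then deduce the hyperbound \eqref{hyper3} from \eqref{Hap13} together with the uniform exponential moment bound \eqref{emo3}.

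For the Harnack inequality, for $t\geq 1$ I would use the semigroup identity $\tilde{P}_t^\mu f(z)=\E[(\tilde{P}_1^\mu f)(\tilde Z_{t-1}^{\mu,z})]$ and couple $\tilde Z_\cdot^{\mu,z}$ and $\tilde Z_\cdot^{\mu,\bar z}$ by the same Brownian motion. Since $\sigma$ depends only on $\mu$, the difference process has no martingale part. Applying It\^o's formula to $\psi^2$ and splitting on $\{|x-\bar x|^2+|y-\bar y|^2\geq R^2\}$ to use \eqref{Patdi} versus the Lipschitz bound \eqref{Lipsc} on the complement, together with \eqref{cpsid} for the comparison $|\cdot|^2\geq \psi^2/C_\psi$, yields
\[
\frac{\d}{\d s}\psi(\tilde Z_s^{\mu,z},\tilde Z_s^{\mu,\bar z})^2 \leq -\frac{\theta}{C_\psi}\psi(\tilde Z_s^{\mu,z},\tilde Z_s^{\mu,\bar z})^2+C_2R^2
\]
for some constant $C_2$ depending on $r,r_0,\theta,K_M$. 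Gronwall and \eqref{cpsid} then give the deterministic pathwise estimate
\[
|\tilde Z_s^{\mu,z}-\tilde Z_s^{\mu,\bar z}|^2\leq C_\psi^2\e^{-\theta s/C_\psi}|z-\bar z|^2+\frac{C_2 C_\psi^2 R^2}{\theta}.
\]
Now I would apply \eqref{Hap} pointwise at $\tilde Z_{t-1}^{\mu,z}$ with reference point $\tilde Z_{t-1}^{\mu,\bar z}$ to obtain $(\tilde P_1^\mu f)(\tilde Z_{t-1}^{\mu,z})\leq \sqrt{(\tilde P_1^\mu f^2)(\tilde Z_{t-1}^{\mu,\bar z})}\,\exp(\tfrac{c_0}{2}|\tilde Z_{t-1}^{\mu,z}-\tilde Z_{t-1}^{\mu,\bar z}|^2)$, substitute the above deterministic bound into the exponent so that it pulls out of the expectation, take expectation, apply Jensen to $\sqrt{\cdot}$, and square. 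This produces \eqref{Hap13} with $C_1=C_2 R^2$.

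For the hyperbound I would choose $t_0$ large enough that $\alpha:=c_0C_\psi^2\e^{-\theta(t_0-1)/C_\psi}$ satisfies $4\alpha\leq\vv$, where $\vv$ is from \eqref{emo3}; the small-$K_I$ hypothesis enters here via Theorem \ref{fixdi} to ensure $\Phi$ preserves $\hat{\scr P}_{2,N}(\R^{2d})$ and that \eqref{emo3} is available uniformly. For $f\geq 0$ with $\Phi(\mu)(f^2)=1$, multiplying \eqref{Hap13} at $t=t_0$ by $\e^{-\alpha|z-\bar z|^2}$ and integrating against $\Phi(\mu)(\d\bar z)$ uses the invariance $\int\tilde P_{t_0}^\mu f^2\,\d\Phi(\mu)=1$ on the right-hand side. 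Lower-bounding the Gaussian integral $\int\e^{-\alpha|z-\bar z|^2}\Phi(\mu)(\d\bar z)$ by restricting to a ball on which $\Phi(\mu)$ carries mass $\geq 1/2$ uniformly in $\mu$ (guaranteed by \eqref{emo3}) yields $(\tilde P_{t_0}^\mu f)^2(z)\leq C\e^{2\alpha|z|^2}$ uniformly in $\mu\in\hat{\scr P}_{2,N}(\R^{2d})$. Raising to the fourth power, integrating against $\Phi(\mu)$, and invoking \eqref{emo3} with $4\alpha\leq\vv$ gives \eqref{hyper3}.

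The main obstacle is the additive constant $\sim C_1C_\psi^2 R^2/\theta$ in the exponent of \eqref{Hap13}: partial (rather than full) dissipativity prevents the exponent from vanishing as $t\to\infty$, in contrast to the non-degenerate Lemma \ref{GTY}(1). I expect this to be harmless because it is uniform in $t$ and in $\mu$, and only rescales the final constant $\tilde C$. A secondary delicate point is obtaining the uniform-in-$\mu$ lower bound on $\int\e^{-\alpha|z-\bar z|^2}\,\Phi(\mu)(\d\bar z)$; this is precisely what the uniform tightness of $\{\Phi(\mu)\}_{\mu\in\hat{\scr P}_{2,N}(\R^{2d})}$ furnished by \eqref{emo3} provides, which is the reason the smallness of $K_I$ is required at this stage and not only for the contractivity of $\Phi$.
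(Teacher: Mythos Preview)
Your proposal is correct and follows essentially the same approach as the paper: synchronous coupling plus the differential inequality for $\psi^2$ (using \eqref{Patdi} outside the ball and \eqref{Lipsc} inside, together with \eqref{cpsid}) to get the deterministic bound \eqref{difxy}, then plugging into the time-$1$ Harnack inequality \eqref{Hap} and using Jensen's inequality with the Markov property for \eqref{Hap13}; the paper derives \eqref{hyper3} by simply referring back to the argument for \eqref{hyper}, which is exactly the integrate-in-$\bar z$ argument you spell out in more detail.
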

\begin{proof}
Let $(X_t,Y_t)$ and $(\bar{X}_t,\bar{Y}_t)$ solve \eqref{tih12} with $(X_0,Y_0)=z,(\bar{X}_0,\bar{Y}_0)=\bar{z}$.
By It\^{o}'s formula, \eqref{Lipsc}, \eqref{Patdi} and \eqref{cpsid}, we can find a constant $C_1>0$ depending on $r,r_0,\theta,R, K_M$ such that
\begin{align*}
\d \psi((X_t,Y_t),(\bar{X}_t,\bar{Y}_t))^2
&=\<r^2(X_t-\bar{X}_t)+rr_0(Y_t-\bar{Y}_t),Y_t-\bar{Y}_t\>\\
&+\<(Y_t-\bar{Y}_t)+rr_0(X_t-\bar{X}_t),b(X_t,Y_t,\mu)-b(\bar{X}_t,\bar{Y}_t,\mu)\>\\
&\leq -\theta(|X_t-\bar{X}_t|^2+|Y_t-\bar{Y}_t|^2)\d t+C_1\d t\\
&\leq -\frac{\theta}{C_\psi}(|X_t-\bar{X}_t|^2+|Y_t-\bar{Y}_t|^2)\d t+C_1\d t.
\end{align*}
This again combined with \eqref{cpsid} implies that
\begin{align}\label{difxy}
|X_t-\bar{X}_t|^2+|Y_t-\bar{Y}_t|^2\leq C_1\frac{C_\psi^2}{\theta}+C_\psi^2\e^{-\frac{\theta}{C_\psi} t}|z-\bar{z}|^2.
\end{align}
On the other hand, it follows from \eqref{Hap} with $t=1$ and \eqref{difxy} that
\begin{align}\label{Hap12}
\nonumber(\tilde{P}_1^\mu f)^2(X_{t-1})&\leq \tilde{P}_1^\mu f^2(\bar{X}_{t-1})\e^{c_0(|X_{t-1}-\bar{X}_{t-1}|^2+|Y_{t-1}-\bar{Y}_{t-1}|^2)}\\
&\leq\tilde{P}_1^\mu f^2(\bar{X}_{t-1})\e^{c_0C_1\frac{C_\psi^2}{\theta}+c_0C_\psi^2\e^{-\frac{\theta}{C_\psi} (t-1)}|z-\bar{z}|^2},\ \ t\geq 1, f\in\scr B_b^+(\R^{2d}).
\end{align}
Taking expectation in \eqref{Hap12} and using the semigroup property and Jensen's inequality, we deduce
\eqref{Hap13}.
Repeating the same argument to derive \eqref{hyper} from \eqref{niHar} and \eqref{emo}, we gain \eqref{hyper3} from \eqref{emo3} and \eqref{Hap13}.
\end{proof}
\subsubsection{Log-Sobolev inequality}
\begin{thm}\label{logta}
Assume {\bf (C)}, \eqref{upb}, \eqref{lowbo} and $K_I$ is small enough.
\begin{enumerate}
\item[$(1)$] Then for any $\mu\in\hat{\scr P}_{2,N}(\R^{2d})$, $\Phi(\mu)$ satisfies the log-Sobolev inequality
\begin{align}\label{losob1}\nonumber&\Phi(\mu)(f\log f)-\Phi(\mu)(f)\log \Phi(\mu)(f)\\
&\leq C_{L}(\Phi(\mu))\Phi(\mu)(|\nabla f^{\frac{1}{2}}|^2),\ \ 0<f\in C_0^\infty(\R^{2d})
\end{align}
for some constant $C_L(\Phi(\mu))>0$ depending on $\Phi(\mu)$. Consequently,  for any $\mu\in\hat{\scr P}_{2,N}(\R^{2d})$, the Talagrand inequality
 \begin{align}\label{Tal}\W_2(\mu_0,\Phi(\mu))^2\leq  \tilde{C}_{L}(\Phi(\mu))\mathrm{Ent}(\mu_0|\Phi(\mu)),\ \ \mu_0\in\scr P_2(\R^{2d}).
\end{align}
holds for some constant $\tilde{C}_L(\Phi(\mu))>0$.
\item[$(2)$]
If moreover, for any $R>0$ and $N>0$, there exist positive constants $C_1$ and $C_2$ depending on $R$, $N$ such that
\begin{align}\label{upbloh}C_1\leq \rho_\mu(x)\leq C_2,\ \ x\in B_R, \mu\in\hat{\scr P}_{2,N}(\R^{2d})
\end{align}
for $\rho_\mu:=\frac{\d \Phi(\mu)}{\d x}$.  Then there exists a constant $\bar{C}>0$ depending on $N$
such that
\begin{align}\label{losob1h}&\nonumber\Phi(\mu)(f\log f)-\Phi(\mu)(f)\log \Phi(\mu)(f)\\
&\leq \bar{C}\Phi(\mu)(|\nabla f^{\frac{1}{2}}|^2),\ \ 0<f\in C_0^\infty(\R^{2d}), \mu\in\hat{\scr P}_{2,N}(\R^{2d}),
\end{align}
and
 \begin{align}\label{Talht}\W_2(\mu_0,\Phi(\mu))^2\leq  \bar{C}\mathrm{Ent}(\mu_0|\Phi(\mu)),\ \ \mu_0\in\scr P_2(\R^{2d}), \mu\in\hat{\scr P}_{2,N}(\R^{2d}).
\end{align}
\end{enumerate}
\end{thm}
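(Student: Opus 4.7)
The plan is to mirror the strategy used for Theorem \ref{logta0}, adapting it to the kinetic setting and tracking uniformity in $\mu$.

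First I would apply Theorem \ref{Poi23} at $s=0$ with $\mu_t\equiv \mu$ to obtain the time-homogeneous semigroup log-Sobolev inequality
\[
\tilde{P}_t^\mu(f\log f)-(\tilde{P}_t^\mu f)\log(\tilde{P}_t^\mu f)\leq \frac{2\delta_1}{K_1}(\e^{K_1 t}-1)\,\tilde{P}_t^\mu(|\nabla f^{1/2}|^2),\quad 0<f\in C_0^\infty(\R^{2d}),
\]
with constant independent of $\mu$. Combining this with the hyperboundedness \eqref{hyper3} and Lemma \ref{RWlog}(1) yields a defective log-Sobolev inequality for $\Phi(\mu)$ with constants that are uniform in $\mu\in\hat{\scr P}_{2,N}(\R^{2d})$.

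For part (1), to upgrade the defective log-Sobolev inequality to a genuine one for the single measure $\Phi(\mu)$, I would establish a Poincar\'e inequality for $\Phi(\mu)$ via the weak Poincar\'e route of Lemma \ref{RWlog}(2). The tightness of $\Phi(\mu)$ follows from \eqref{emo3}, and the local positivity/upper boundedness of the density $\rho_\mu$ on balls comes from the hypoelliptic regularity of the kinetic generator (elliptic noise in the $y$-variable via \eqref{lowbo} combined with the Lipschitz drift \eqref{Lipsc}); together with the classical Poincar\'e inequality on Euclidean balls, these give a weak Poincar\'e inequality for $\Phi(\mu)$. Feeding this into the Rothaus-type argument at the end of the proof of Lemma \ref{RWlog}(2) (defective log-Sobolev plus weak Poincar\'e gives full Poincar\'e, which combined with defective log-Sobolev gives log-Sobolev) yields \eqref{losob1} with constant $C_L(\Phi(\mu))$ depending on the local density bounds of that particular $\Phi(\mu)$. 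The Talagrand inequality \eqref{Tal} then follows from \eqref{losob1} via the Otto--Villani theorem.

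For part (2), the additional hypothesis \eqref{upbloh} provides uniform-in-$\mu$ local density bounds on $\{\Phi(\mu):\mu\in\hat{\scr P}_{2,N}(\R^{2d})\}$, so together with the uniform tightness from \eqref{emo3} we are exactly in the setting of Lemma \ref{RWlog}(2), which gives the uniform log-Sobolev inequality \eqref{losob1h}; the uniform Talagrand inequality \eqref{Talht} again follows by Otto--Villani. The main obstacle is part (1): without assuming uniform density bounds, the local regularity of $\rho_\mu$ in the degenerate kinetic setting must be extracted from hypoelliptic arguments rather than from the elementary Harnack inequality for uniformly elliptic stationary densities (\cite[Corollary 1.2.11]{BKR}) that was used for Theorem \ref{logta0}; this is why only a $\mu$-dependent constant is obtained in \eqref{losob1}.
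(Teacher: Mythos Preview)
Your outline for part (2) matches the paper exactly: uniform defective log-Sobolev from \eqref{hyper3} and Theorem \ref{Poi23} via Lemma \ref{RWlog}(1), then \eqref{emo3} plus \eqref{upbloh} feed into Lemma \ref{RWlog}(2), and Otto--Villani gives \eqref{Talht}.

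For part (1) you take a different route from the paper. After obtaining the uniform defective log-Sobolev inequality (same first step as yours), the paper does \emph{not} try to extract local density bounds for $\rho_\mu$ via hypoelliptic regularity. Instead it simply invokes the irreducibility of the Dirichlet form $\scr E(f,f)=\Phi(\mu)(|\nabla f|^2)$ (here $\nabla$ is the full gradient on $\R^{2d}$, and $\Phi(\mu)$ has full support, so $\scr E(f,f)=0$ forces $f$ constant), and cites \cite[Corollary 1.3]{Wang2014} to upgrade the defective log-Sobolev to a full one; \eqref{Tal} then follows from Otto--Villani. This is considerably shorter: it bypasses any quantitative information on $\rho_\mu$ and uses only a qualitative irreducibility fact. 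Your plan---getting local two-sided bounds on $\rho_\mu$ from hypoelliptic theory, then running the weak Poincar\'e argument of Lemma \ref{RWlog}(2) for the singleton family $\{\Phi(\mu)\}$---would also succeed, but the density bounds in the kinetic setting require nontrivial input (Hörmander-type smoothness and a hypoelliptic Harnack or control-theoretic positivity argument) that the paper does not develop and does not need. The trade-off is that the paper's argument gives a $\mu$-dependent constant with no explicit handle on it, whereas your approach, if the hypoelliptic bounds were made quantitative, could in principle yield more information; but for the statement as written the paper's shortcut is the cleaner choice.
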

\begin{rem}
	The drift $b$ in Theorem \ref{logta} is allowed to be partially dissipative, which improves the existing results in \cite{RW} where $b$ is assumed to be uniformly dissipative. In contrast to the results in \cite{Villani} and \cite{Baudoin}, the drift is allowed to be not of gradient-type.
\end{rem}
\begin{proof}
(1)	Combining the hyperboundedness \eqref{hyper3} and the log-Sobolev inequality \eqref{til} with $s=0$ due to Theorem \ref{Poi23} and $\tilde{P}_t^\mu$ replacing $P_{s,t}^\mu$, we obtain a defective log-Sobolev inequality by Lemma \ref{RWlog}(1), i.e. there exist constant $\tilde{C}_1,\tilde{C}_2>0$ independent of $\mu$ such that
\begin{align}\label{delogso}\nonumber&\Phi(\mu)(f\log f)-\Phi(\mu)(f)\log \Phi(\mu)(f)\\
&\leq \tilde{C}_1\Phi(\mu) ( |\nabla f^{\frac{1}{2}}|^2 )+\tilde{C}_2,\ \ 0<f\in C_0^\infty(\R^d),\Phi(\mu)(f)=1, \mu\in\hat{\scr P}_{2,N}(\R^{2d}).
\end{align}
 The irreducibility of $\scr E(f,f)=\Phi(\mu)(|\nabla f|^2)$ implies \eqref{losob1} by \cite[Corollary 1.3]{Wang2014}. \eqref{losob1} further implies \eqref{Tal} according to \cite{OV}.

(2) By \eqref{delogso}, \eqref{emo3} and \eqref{upbloh}, the proof is completed by Lemma \ref{RWlog}(2) and \cite{OV}.
\end{proof}
\subsection{Exponential ergodicity of McKean-Vlasov SDEs}
Finally, we will use Theorem \ref{logta} to study the exponential ergodicity of \eqref{Degth}. We divide it into two cases: uniformly dissipative case (\eqref{Patdi} with $R=0$) and partially dissipative case(\eqref{Patdi} with $R>0$). In the former one, $b$ can be general while in the latter one, $b$ is assumed to be of gradient-type.
\subsubsection{Uniformly dissipative case}
\begin{thm}\label{cty}Assume {\bf (C)} with $R=0$, \eqref{upb} and \eqref{lowbo}. If $K_I$ is small enough, then  \eqref{Degth} has a unique IPM $\bar{\mu}\in\scr P_2(\R^{2d})$, and there exist constants $c,\lambda>0 $ such that
\begin{align}\label{w2con}\W_2(P_t^\ast\mu_0,\bar{\mu})\leq c\e^{-\lambda t}\W_2(\mu_0,\bar{\mu}),\ \ t\geq 0, \mu_0\in\scr P_2(\R^{2d}).
\end{align}
Moreover, there exists a constant $\tilde c>0 $ such that
\begin{align}\label{entco} \mathrm{Ent}(P_t^\ast\mu_0|\bar{\mu})\leq \tilde{c}^2\e^{-2\lambda t}\mathrm{Ent}(\mu_0|\bar{\mu}),~ t\geq 1, \mu_0\in\scr P_2(\R^{2d}).
\end{align}
\end{thm}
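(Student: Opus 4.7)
The plan is to mirror the three-step strategy of Theorem \ref{Erg}: first establish a uniform-in-$\mu$ contraction estimate for the decoupled semigroup $\tilde{P}_t^\mu$ in $\W_2$, then turn the map $\Phi$ of Theorem \ref{fixdi} into a strict contraction on $\hat{\scr P}_{2,N}(\R^{2d})$ to obtain the unique IPM $\bar{\mu}$, deduce $\W_2$ exponential ergodicity of the MVSDE via a Gronwall argument, and finally combine a log-Harnack inequality for \eqref{Degth} with Talagrand's inequality from Theorem \ref{logta}(1) to upgrade to entropy decay.

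Step 1 (contraction of $\tilde{P}_t^\mu$ uniformly in $\mu$). For any fixed $\mu$, let $(X_t,Y_t),(\bar X_t,\bar Y_t)$ solve \eqref{tih12} driven by the same $W_t$ with distinct initial values. Itô's formula applied to $\psi((X_t,Y_t),(\bar X_t,\bar Y_t))^2$ together with \eqref{Patdi} at $R=0$ gives $\psi(\cdot,\cdot)_t^2\leq \e^{-2\theta t/C_\psi}\psi(\cdot,\cdot)_0^2$, hence by \eqref{cpsid},
\begin{equation*}
\W_2\big((\tilde P_t^\mu)^\ast\nu_1,(\tilde P_t^\mu)^\ast\nu_2\big)\leq c_1\e^{-\lambda_1 t}\W_2(\nu_1,\nu_2), \quad \nu_1,\nu_2\in\scr P_2(\R^{2d}),\;\mu\in\scr P_2(\R^{2d}),
\end{equation*}
for constants $c_1,\lambda_1>0$ independent of $\mu$. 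In particular $\W_2((\tilde P_t^\mu)^\ast\nu,\Phi(\mu))\leq c_1\e^{-\lambda_1 t}\W_2(\nu,\Phi(\mu))$.

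Step 2 (contraction of $\Phi$ and existence/uniqueness of $\bar\mu$). For $\mu_1,\mu_2\in\hat{\scr P}_{2,N}(\R^{2d})$ run two decoupled systems with the common initial law $\Phi(\mu_1)$ and common Brownian motion but frozen measures $\mu_1,\mu_2$. Applying Itô to $\psi$ and using {\bf(C)} together with Gronwall, one obtains
\begin{equation*}
\W_2\big((\tilde P_t^{\mu_2})^\ast\Phi(\mu_1),\Phi(\mu_1)\big)^2\leq C\,K_I^2\int_0^t\e^{K_1(t-s)}\W_2(\mu_1,\mu_2)^2\,\d s,
\end{equation*}
exactly as in \eqref{kgy}. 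Combining with Step 1 (applied with $\mu=\mu_2,\nu=\Phi(\mu_1)$) and the triangle inequality gives
\begin{equation*}
\W_2(\Phi(\mu_1),\Phi(\mu_2))\leq c_1\e^{-\lambda_1 t}\W_2(\Phi(\mu_1),\Phi(\mu_2))+C\,K_I\,\sqrt{\tfrac{\e^{K_1 t}-1}{K_1}}\,\W_2(\mu_1,\mu_2).
\end{equation*}
Choosing $t$ large enough that $c_1\e^{-\lambda_1 t}<1$ and $K_I$ sufficiently small makes the total prefactor strictly less than $1$; since by Theorem \ref{fixdi}(2) $\Phi$ maps $\hat{\scr P}_{2,N}(\R^{2d})$ into itself, the Banach fixed-point theorem on the complete space $(\hat{\scr P}_{2,N}(\R^{2d}),\W_2)$ produces a unique $\bar\mu=\Phi(\bar\mu)$, which is the unique IPM of \eqref{Degth}.

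Step 3 ($\W_2$-ergodicity). Couple the solution $Z_t$ of \eqref{Degth} with initial law $\mu_0$ and the decoupled process $\tilde Z_t$ of \eqref{tih12} at $\mu=\bar\mu$, driven by the same $W_t$ with the same initial datum. Itô on $\psi(Z_t,\tilde Z_t)^2$ together with {\bf(C)} and Gronwall yields
\begin{equation*}
\W_2(P_t^\ast\mu_0,(\tilde P_t^{\bar\mu})^\ast\mu_0)^2\leq C\,K_I\,\e^{C t}\int_0^t\W_2(P_s^\ast\mu_0,\bar\mu)^2\,\d s.
\end{equation*}
Using Step 1 for the second piece of the triangle inequality and Gronwall again, small $K_I$ gives some $\hat t>0$ and $\alpha\in(0,1)$ with $\W_2(P_{\hat t}^\ast\mu_0,\bar\mu)\leq\alpha\W_2(\mu_0,\bar\mu)$; the semigroup property $P_{t+s}^\ast=P_t^\ast P_s^\ast$ then upgrades this to \eqref{w2con}.

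Step 4 (entropy decay). Since $\bar\mu=\Phi(\bar\mu)\in\hat{\scr P}_{2,N}(\R^{2d})$, Theorem \ref{logta}(1) gives Talagrand's inequality $\W_2(\nu,\bar\mu)^2\leq\tilde C_L(\bar\mu)\mathrm{Ent}(\nu|\bar\mu)$. The log-Harnack inequality for the kinetic MVSDE \eqref{Degth} (which, under {\bf(C)}, \eqref{upb}, \eqref{lowbo} and small $K_I$, is available by a coupling-by-change-of-measure argument as in \cite{FYW1}) reads
\begin{equation*}
(P_t^\ast\mu_0)(\log f)\leq\log (P_t^\ast\nu_0)(f)+\eta(t)\W_2(\mu_0,\nu_0)^2,\quad t\geq 1,\ 1<f\in\scr B_b(\R^{2d}).
\end{equation*}
Taking $\nu_0=\bar\mu$ and optimizing over $f$ as in the proof of Theorem \ref{Erg}(3) gives $\mathrm{Ent}(P_1^\ast\mu_0|\bar\mu)\leq\eta(1)\W_2(\mu_0,\bar\mu)^2$. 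Combining with Step 3 (applied at time $t-1$) and Talagrand's inequality for $\bar\mu$ yields
\begin{equation*}
\mathrm{Ent}(P_t^\ast\mu_0|\bar\mu)\leq\eta(1)\W_2(P_{t-1}^\ast\mu_0,\bar\mu)^2\leq\eta(1)c^2\e^{-2\lambda(t-1)}\tilde C_L(\bar\mu)\mathrm{Ent}(\mu_0|\bar\mu),
\end{equation*}
which is \eqref{entco} after absorbing constants.

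The main obstacle is the log-Harnack inequality in Step 4: the kinetic structure makes the Brownian motion act only on the velocity component, so one must transfer a velocity perturbation to a position perturbation via a carefully chosen two-scale coupling, and simultaneously control the measure-dependence of $b$ and $\sigma$. The synchronous-coupling and Gronwall machinery for Steps 1--3 is routine once the $\psi$-distance is used, but the log-Harnack estimate in the degenerate measure-dependent setting is where the real work lies.
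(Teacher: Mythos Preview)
Your overall strategy is correct and matches the paper's: combine $\W_2$-exponential ergodicity with a log-Harnack inequality for the degenerate McKean--Vlasov system and Talagrand's inequality from Theorem~\ref{logta}(1) (applied at $\mu=\bar\mu=\Phi(\bar\mu)$) to upgrade to entropy decay. The paper, however, does not carry out your Steps~1--3 by hand; instead it simply cites \cite[Theorem 4.2]{HW24}, which under {\bf(C)} with $R=0$ and small $K_I$ gives existence, uniqueness of $\bar\mu$ and \eqref{w2con} directly. Your explicit synchronous-coupling-in-$\psi$ argument for Steps~1--3 is nonetheless valid and is exactly the route the paper takes later in the partially dissipative Theorem~\ref{thm: erg deg} (see \eqref{der}, \eqref{samin}); so you have effectively reproduced that machinery in the simpler $R=0$ setting rather than importing the result.

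The one point to flag is your reference for the log-Harnack inequality in Step~4. You cite \cite{FYW1}, but that paper treats the non-degenerate case; the kinetic/degenerate McKean--Vlasov log-Harnack inequality \eqref{lohar} is taken in the paper from \cite[Theorem 3.1]{HW24}. You correctly identify the degenerate structure as ``where the real work lies'' and describe the two-scale coupling idea accurately, but the actual estimate is not a routine adaptation of \cite{FYW1}; it requires the hypoelliptic coupling construction of \cite{HW24} (building on \cite{FYW2017}). With that citation corrected, your proof is complete and equivalent to the paper's.
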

\begin{rem} \eqref{entco} in Theorem \ref{cty} extends the results in \cite[Theorem 2.4]{RW} and \cite[Theorem 4.2]{HW24}, where $b$ is assumed to be of gradient-type, i.e. $b(x,y,\mu)=-y-x-\nabla V (x,\mu)$.
\end{rem}
\begin{proof} By {\bf (C)} with $R=0$, it follows from \cite[Theorem 4.2]{HW24} that when $K_I$ is small enough, \eqref{Degth} has a unique IPM $\bar{\mu}\in\scr P_2(\R^d)$ satisfying \eqref{w2con}.
Since $\Phi(\bar{\mu})=\bar{\mu}\in\hat{\scr P}_{2,N}(\R^{2d})$, \eqref{Tal} implies the following Talagrand inequality
\begin{align}\label{RR}
\W_2(\mu_0,\bar{\mu})^2\leq  \tilde{C}_{L}\mathrm{Ent}(\mu_0|\bar{\mu}),\ \ \mu_0\in\scr P_2(\R^{2d}).
\end{align}
By  \cite[Theorem 3.1]{HW24}, there exists a measurable function $c\colon (0,\infty)\to(0, \infty)$ such that
\begin{align}\label{lohar}
\nonumber&(P_t^\ast\mu_0) (\log f)\\
&\leq \log (P_t^\ast\nu_0) (f)+c(t)\W_2(\mu_0,\nu_0)^2, \ \ t>0,\mu_0,\nu_0\in\scr P_2(\R^{2d}), 0<f\in\scr B_b(\R^{2d}).
\end{align}
Combining \eqref{lohar}, \eqref{w2con} and \eqref{RR}, we derive \eqref{entco} by the same argument we derived \eqref{Wcoveent}.
\end{proof}
\subsubsection{Partially dissipative case}
\begin{thm}\label{thm: erg deg} Assume \eqref{Patdi} with $R>0$, $\sigma=I_{d\times d}$ and
$$b(x,y,\mu)=-y-\nabla V(x)-\int_{\R^{2d}}\nabla_x W(x,z)\mu(\d z), \ \ x,y\in\R^d$$
with $V:\R^d\to\R$ and $W:\R^{d}\times\R^{2d}\to\R$ satisfying
\begin{align}
\label{Lipvw}\|\nabla^2 V\|+\|\nabla_x^2W\|\leq K_M\quad \mbox{ and } \quad \|\nabla_z\nabla_xW\|\leq K_I
\end{align}
for some constants $K_M,K_I>0$. If $K_I$ is small enough, then  \eqref{Degth} has a unique invariant probability measure $\bar{\mu}\in\scr P_2(\R^{2d})$. Moreover, there exist constants $c>0, \lambda >0$ such that
\begin{align}\label{w3e}\W_2(P_t^\ast\mu_0,\bar{\mu})\leq c\e^{-\lambda t}\W_2(\mu_0,\bar{\mu}), \ \ t\geq 0,
\end{align}
and $$\mathrm{Ent}(P_t^\ast\mu_0|\bar{\mu})\leq c^2\e^{-2\lambda t}\mathrm{Ent}(\mu_0|\bar{\mu}),\ \ t\geq 1.$$
\end{thm}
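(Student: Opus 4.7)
\textbf{Proof proposal for Theorem \ref{thm: erg deg}.}

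My plan is to mirror the structure of Theorem \ref{cty} but work over the partially dissipative regime, exploiting the gradient form of $b$ to apply Theorem \ref{logta}(2). First, for each fixed $\mu \in \hat{\scr P}_{2,N}(\R^{2d})$ the decoupled system \eqref{tih12} reads
\begin{align*}
\d \tilde X_t^\mu = \tilde Y_t^\mu\,\d t,\qquad
\d \tilde Y_t^\mu = -\tilde Y_t^\mu\,\d t - \nabla\big(V + W_\mu\big)(\tilde X_t^\mu)\,\d t + \d W_t,
\end{align*}
where $W_\mu(x):=\int_{\R^{2d}} W(x,z)\mu(\d z)$. This is a classical kinetic Langevin equation, whose invariant measure $\Phi(\mu)$ is the explicit Gibbs measure
$\rho_\mu(x,y)\propto\exp(-|y|^2/2 - V(x) - W_\mu(x))$, integrability being guaranteed by the dissipativity condition \eqref{Patdi}. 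Uniformity of $\|\nabla^2 V\|, \|\nabla_x^2 W\|\le K_M$ together with the second-moment bound defining $\hat{\scr P}_{2,N}$ yields, for each fixed $R>0$, two-sided bounds on $\rho_\mu$ over $B_R$ independent of $\mu\in\hat{\scr P}_{2,N}(\R^{2d})$, verifying hypothesis \eqref{upbloh}. Theorem \ref{logta}(2) then produces the \emph{uniform} log-Sobolev and Talagrand inequality \eqref{Talht} for $\Phi(\mu)$.

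The next step is the contraction of $\Phi$ on $\hat{\scr P}_{2,N}(\R^{2d})$. Under the partial dissipativity \eqref{Patdi} with the gradient structure, the frozen kinetic semigroup $\tilde P_t^\mu$ admits a uniform-in-$\mu$ exponential contraction in a twisted $\W_2$ (and hence in $\W_2$ up to equivalence constants), obtainable from the reflection/synchronous coupling machinery of Eberle--Guillin--Zimmer together with the uniform log-Sobolev inequality just established, via the semigroup-entropy argument of \cite{W2020}. Assuming
$\W_2((\tilde P_t^\mu)^\ast\nu,\Phi(\mu))\le c_1\e^{-\lambda_1 t}\W_2(\nu,\Phi(\mu))$ uniformly in $\mu\in\hat{\scr P}_{2,N}(\R^{2d})$, I would then run exactly the Gronwall/coupling computation from Theorem \ref{Erg}(1): writing two copies of \eqref{tih12} with frozen measures $\mu_1,\mu_2\in\hat{\scr P}_{2,N}(\R^{2d})$ and using the Lipschitz bound $\|\nabla_z\nabla_x W\|\le K_I$, one controls $\W_2(\Phi(\mu_1),\Phi(\mu_2))$ by a factor proportional to $K_I$ times $\W_2(\mu_1,\mu_2)$. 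For $K_I$ sufficiently small, $\Phi$ is a strict contraction on $\hat{\scr P}_{2,N}(\R^{2d})$, providing the unique fixed point $\bar\mu \in \hat{\scr P}_{2,N}(\R^{2d})$.

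To upgrade from the frozen contraction to the nonlinear contraction \eqref{w3e}, I would couple $P_t^\ast\mu_0$ with $(\tilde P_t^{\bar\mu})^\ast\mu_0$ through a synchronous coupling on the $Y$-component, apply It\^o's formula to the twisted norm appearing in \eqref{Patdi}, and then invoke the frozen contraction plus Gronwall's inequality — the same two-step splitting used in the proof of Theorem \ref{Erg}(2). This yields constants $c,\lambda>0$ with $\W_2(P_t^\ast\mu_0,\bar\mu)\le c\e^{-\lambda t}\W_2(\mu_0,\bar\mu)$. Finally, for the entropy bound, the kinetic log-Harnack inequality \cite[Theorem 3.1]{HW24} applies to \eqref{Degth} under our assumptions (since $\sigma=I_{d\times d}$ and the drift is globally Lipschitz by \eqref{Lipvw}); combining it with the Talagrand inequality \eqref{Talht} applied to $\Phi(\bar\mu)=\bar\mu$ and the $\W_2$-contraction \eqref{w3e}, we obtain $\mathrm{Ent}(P_t^\ast\mu_0|\bar\mu)\le c^2\e^{-2\lambda t}\mathrm{Ent}(\mu_0|\bar\mu)$ for $t\ge 1$ by the same passage used in Theorem \ref{Erg}(3) and Theorem \ref{cty}.

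The main obstacle is the uniform-in-$\mu$ $\W_2$-contraction of the frozen kinetic semigroup in the partially dissipative regime. Unlike the non-degenerate situation of Theorem \ref{Erg}, where \cite[Theorem 2.6(2)]{W2020} converts log-Sobolev to $\W_2$-contraction directly, the hypoelliptic/degenerate kinetic setting requires either a twisted distance (so that \eqref{Patdi} gives dissipation in the modified metric) or a coupling of reflection type tailored to the kinetic geometry. The gradient structure and the uniform log-Sobolev inequality from Theorem \ref{logta}(2) are precisely what allow this upgrade; verifying that the constants are uniform in $\mu \in \hat{\scr P}_{2,N}(\R^{2d})$ is the delicate point that must be checked carefully before the fixed-point argument closes.
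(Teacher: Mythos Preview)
Your outline correctly identifies the explicit Gibbs form of $\Phi(\mu)$, the verification of \eqref{upbloh}, the application of Theorem~\ref{logta}(2), and the overall fixed-point/Gronwall architecture. The final passage from $\W_2$-contraction to entropy decay via log-Harnack and Talagrand is also right. But the step you flag as the ``main obstacle'' --- the uniform-in-$\mu$ $\W_2$-contraction of the frozen kinetic semigroup --- is a genuine gap, and neither of the routes you suggest will close it. The result \cite[Theorem~2.6(2)]{W2020} is for non-degenerate diffusions and does not apply to the hypoelliptic kinetic system; the Eberle--Guillin--Zimmer reflection coupling gives $\W_1$-contraction, not $\W_2$, and upgrading from $\W_1$ to $\W_2$ in the partially dissipative regime is exactly the issue.

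The paper resolves this by going through entropy rather than through a coupling distance. The uniform log-Sobolev inequality \eqref{losob1h} is fed into Villani's hypocoercivity theorem \cite[Theorem~39]{Villani}, which in the gradient-drift kinetic setting converts a log-Sobolev inequality for $\Phi(\mu)$ directly into exponential \emph{entropy} decay for the frozen semigroup:
\[
\mathrm{Ent}\big((\tilde P_t^\mu)^\ast\mu_0\,\big|\,\Phi(\mu)\big)\le c\,\e^{-2\lambda t}\,\mathrm{Ent}\big(\mu_0\,\big|\,\Phi(\mu)\big),\qquad t\ge 1,\ \mu\in\hat{\scr P}_{2,N}(\R^{2d}).
\]
The $\W_2$-contraction is then recovered by the chain
\[
\W_2^2 \;\overset{\text{Talagrand}}{\lesssim}\; \mathrm{Ent}\big((\tilde P_{t-1}^\mu)^\ast(\tilde P_1^\mu)^\ast\mu_0\,\big|\,\Phi(\mu)\big)
\;\overset{\text{Villani}}{\lesssim}\; \e^{-2\lambda t}\,\mathrm{Ent}\big((\tilde P_1^\mu)^\ast\mu_0\,\big|\,\Phi(\mu)\big)
\;\overset{\text{log-Harnack}}{\lesssim}\; \e^{-2\lambda t}\,\W_2(\mu_0,\Phi(\mu))^2,
\]
where the last inequality uses the \emph{frozen} log-Harnack for $\tilde P_1^\mu$. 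All constants are uniform in $\mu\in\hat{\scr P}_{2,N}(\R^{2d})$ because the log-Sobolev constant is. This is the missing mechanism; once you insert it, the rest of your argument runs as written.
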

\begin{rem}\label{vil} In the distribution independent case, \cite[Theorem 39]{Villani} derives the exponential ergodicity in relative entropy by means of the log-Sobolev inequality for the explicit invariant probability measure on the product space $\R^{2d}$. In Theorem \ref{thm: erg deg}, we offer an explicit partially dissipative condition under which the log-Sobolev inequality holds. Moreover, the authors in \cite{GM} adopt the uniform in $N$(number of particles) log-Sobolev inequality and log-Harnack inequality for mean field interacting particle system as well as propagation of chaos to deduce the exponential ergodicity in $L^2$-Wasserstein distance and mean field entropy. Different from \cite{GM}, in Theorem \ref{thm: erg deg}, we derive the exponential ergodicity in relative entropy and instead of starting from the mean field interacting particle system as in \cite{GM}, we begin by investigating the time-homogeneous decoupled SDEs.
\end{rem}
\begin{proof} \eqref{Degth} and \eqref{tih12} reduce to \begin{align}\label{Degth1}\left\{
  \begin{array}{ll}
    \d X_t=Y_t\d t,  \\
    \d Y_t=-Y_t\d t-\nabla V(X_t)\d t-\int_{\R^{2d}}\nabla_x W(X_t,z)\L_{(X_t,Y_t)}(\d z)\d t+\d W_t,
  \end{array}
\right.
\end{align}
and
\begin{align*}\left\{
  \begin{array}{ll}
    \d \tilde{X}_t^\mu=\tilde{Y}_t^\mu\d t,  \\
    \d \tilde{Y}_t^\mu=-\tilde{Y}_t^\mu\d t-\nabla V(\tilde{X}_t^\mu)\d t-\int_{\R^{2d}}\nabla_x W(\tilde{X}_t^\mu,z)\mu(\d z)\d t+\d W_t.
  \end{array}
\right.
\end{align*}
Note that \eqref{Lipvw}   implies \eqref{Lipsc}, which together with \eqref{Patdi} means {\bf(C)} holds.
Observe that the invariant probability measure is
$$\rho_\mu(x,y):=\frac{\d \Phi(\mu)}{\d x\d y}=Z_0\exp\left(-\frac{1}{2}|y|^2-V(x)-\int_{\R^{2d}} (W(x,z)-W(0,z))\mu(\d z)\right),$$
where
$$Z_0=\int_{\R^{2d}}\exp\left(-\frac{1}{2}|y|^2-V(x)-\int_{\R^{2d}} (W(x,z)-W(0,z))\mu(\d z)\right)\d x\d y.$$
By \eqref{Lipvw}, we can find a constant $C_{W,V}>0$ such that
\begin{align*}&|\nabla_xW(x,z)|\leq C_{W,V}(1+|x|+|z|),\\
&|W(x,z)-W(0,z)|\leq C_{W,V}(1+|x|+|z|)|x|,\\
& |V(x)|\leq C_{W,V}(1+|x|^2).
\end{align*}
So, we conclude that for any $R>0$,  there exist constants $C_1$ and $C_2$ depending on $R$, $N$  such that
$$C_1\leq \rho_\mu(x,y)\leq C_2,\ \ (x,y)\in B_R, \mu\in\hat{\scr P}_{2,N}(\R^{2d}).$$
So, by Theorem \ref{logta}, when $K_I$ is small enough, we obtain \eqref{losob1h}, which combined with \cite[Theorem 39]{Villani} implies that  we can find constants $c,\lambda>0$ such that
\begin{align}\label{pty}\mathrm{Ent}((\tilde{P}_t^\mu)^\ast\mu_0|\Phi(\mu))\leq c\e^{-2\lambda t}\mathrm{Ent}(\mu_0|\Phi(\mu)), \ \ t\geq 1, \mu_0\in\scr P_2(\R^{2d}),\mu\in\hat{\scr P}_{2,N}(\R^{2d}).
\end{align}
Note that \eqref{Lipvw} yields that for any $t>0$, there exists a constant $c_0>0$ such that
$$\tilde{P}_t^\mu\log f(z)\leq \log \tilde{P}_t^\mu f(\bar{z})+c_0|z-\bar{z}|^2,\ \ z,\bar{z}\in\R^{2d},f>0,f\in\scr B_b(\R^{2d}),\mu\in\scr P_2(\R^{2d}),$$
see for instance \cite{RW}.
Combining this with \eqref{Talht} and \eqref{pty}, we conclude
\begin{align*}\W_2((\tilde{P}_t^\mu)^\ast\mu_0,\Phi(\mu))^2
&\leq \tilde{C}_L\mathrm{Ent}((\tilde{P}_{t-1}^\mu)^\ast[(\tilde{P}_1^\mu)^\ast\mu_0]|\Phi(\mu))\\
&\leq \tilde{c}_0\e^{-2\lambda t}\mathrm{Ent}((\tilde{P}_1^\mu)^\ast\mu_0|\Phi(\mu))\\
&\leq \tilde{c}_1\e^{-2\lambda t}\W_2(\mu_0,\Phi(\mu))^2,\ \ t\geq 2, \mu_0\in\scr P_2(\R^{2d}), \mu\in\hat{\scr P}_{2,N}(\R^{2d}).
\end{align*}
By \eqref{Lipvw}, it is easy to find a constant $c_2>0$ such that
\begin{align*}&\W_2((\tilde{P}_t^\mu)^\ast\mu_0,\Phi(\mu))^2\leq c_2\W_2(\mu_0,\Phi(\mu))^2, \ \ t\in[0,2], \mu_0\in\scr P_2(\R^{2d}), \mu\in\scr P_2(\R^{2d}).
\end{align*}
So, we have
\begin{align}\label{der}\nonumber&\W_2((\tilde{P}_t^\mu)^\ast\mu_0,\Phi(\mu))^2\\
&\leq \tilde{c}_2\e^{-2\lambda t}\W_2(\mu_0,\Phi(\mu))^2, \ \ t\geq 0, \mu_0\in\scr P_2(\R^{2d}), \mu\in\hat{\scr P}_{2,N}(\R^{2d}).
\end{align}
Again by \eqref{Lipvw}  and Gronwall's inequality, we obtain
\begin{align}\label{samin}\W_2((\tilde{P}_t^\mu)^\ast\Phi(\nu),(\tilde{P}_t^\nu)^\ast\Phi(\nu))^2\leq \e^{(2+2K_M)t}\int_0^tK_I^2\W_2(\mu,\nu)^2\d s,\ \ \mu,\nu\in\scr P_2(\R^{2d}).
\end{align}
Combining \eqref{der} for $\mu_0=\Phi(\nu)$ with \eqref{samin}, we have
\begin{align*}&\W_2(\Phi(\mu),\Phi(\nu)) \leq \sqrt{\tilde{c}_2}\e^{-\lambda t}\W_2(\Phi(\mu),\Phi(\nu))+\sqrt{\e^{(2+2K_M)t}t}K_I\W_2(\mu,\nu), \ \ \mu,\nu\in\hat{\scr P}_{2,N}(\R^d).
\end{align*}
Let $\eta_0$ be given as in Theorem \ref{fixdi} and set
$$\eta_1=\left(\inf_{t>\frac{\log {\sqrt{\tilde{c}_2}}}{\lambda}} \frac{\sqrt{\e^{(2+2K_M)t}t}}{1-\sqrt{\tilde{c}_2}\e^{-\lambda t}}\right)^{-1}.$$
So, when $K_I<\min(\eta_0,\eta_1)$,
we can derive from the Banach fixed theorem that \eqref{Degth1} has a unique invariant probability measure $\bar{\mu}\in\hat{\scr P}_{2,N}(\R^{2d})$.

Finally, we prove the exponential ergodicity in $\W_2$ and relative entropy. Similar to \eqref{samin}, we get
$$\W_2(P_t^\ast\mu_0,(\tilde{P}_t^{\bar{\mu}})^\ast\mu_0)^2\leq \e^{(2+2K_M)t}\int_0^tK_I^2\W_2(P_s^\ast\mu_0,\bar{\mu})^2\d s,\ \ \mu_0\in\scr P_2(\R^{2d}).
$$
This together with \eqref{der} for $\mu=\bar{\mu}$ and Gronwall's inequality yields
\begin{align*}
&\W_2(P_t^\ast\mu_0,\bar{\mu})^2\\
&\leq \left(\int_0^t2\tilde{c}_2\e^{-2\lambda s}2\e^{(2+2K_M)s}K_I^2\e^{2\e^{(2+2K_M)t}K_I^2(t-s)}\d s +2\tilde{c}_2\e^{-2\lambda t}\right)\W_2(\mu_0,\bar{\mu})^2.
\end{align*}
So, by the same argument to derive \eqref{CMYke}, when $K_I$ is small enough, we can choose $\hat{t}>0$ and $\alpha\in(0,1)$ such that
\begin{align*}\W_2(P_{\hat{t}}^\ast\mu_0,\bar{\mu})\leq \alpha\W_2(\mu_0,\bar{\mu}), \ \ \mu_0\in\scr P_2(\R^{2d}).
\end{align*}
By the semigroup property $P_t^\ast P_s^\ast=P_{s+t}^\ast$, we get \eqref{w3e}. Finally, by \eqref{lohar}, \eqref{w3e} and the Talagrand inequality \eqref{Talht} with $\Phi(\mu)=\bar{\mu}$, the proof is completed by the same argument as \eqref{entco}.
\end{proof}
\section{Appendix}
\subsection{Well-posedness}
In this section, we give a result on the well-posedness of \eqref{E1} and \eqref{E2} under {\bf(A)}.
\begin{thm}\label{wel} Under the assumption {\bf(A)} we have the following.
	\begin{enumerate}
		\item [$(1)$] Equation \eqref{E2} is well-posed for any $\mu_\cdot\in C([0,\infty),\scr P_2(\R^d))$ and for any $T>s$, there exists a constant $C(T)>0$ such that
		\begin{align}\label{mof}
			\E\sup_{r\in[s,T]}|X_{s,r}^\mu|^2<C(T)\left(1+\E|X_{s,s}^\mu|^2+\int_s^T\mu_r(|\cdot|^2)\d r\right).
		\end{align}
		\item [$(2)$]
Equation \eqref{E1} is well-posed in $\scr P_2(\R^d)$ and for any $T>0$, there exists a constant $C(T)>0$ such that
	\begin{align}\label{mof13}\E\sup_{s\in[0,T]}|X_s|^2\leq C(T)\left(1+\E|X_0|^2\right).
	\end{align}
	\end{enumerate}
\end{thm}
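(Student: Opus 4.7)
The plan is to prove (1) first by freezing the measure flow. For $\mu_\cdot\in C([0,\infty),\scr P_2(\R^d))$, set $b_t^\mu(x):=b_t(x,\mu_t)$ and $\sigma_t^\mu(x):=\sigma_t(x,\mu_t)$. Specializing \eqref{Mon} to $\tilde\gamma=\gamma=\mu_t$ yields the one-sided spatial monotonicity
\begin{align*}
2\<b_t^\mu(x)-b_t^\mu(y),x-y\>+\|\sigma_t^\mu(x)-\sigma_t^\mu(y)\|_{HS}^2\leq K_1|x-y|^2,\ \ x,y\in\R^d,
\end{align*}
while the $\sigma$-growth in {\bf(A)} together with \eqref{Mon} taken at $y={\bf 0},\tilde\gamma=\delta_{\bf 0}$ furnishes linear-in-$x$ bounds on $b_t^\mu,\sigma_t^\mu$ (with an extra $\mu_t(|\cdot|^2)$-term). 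This is the classical monotone-SDE framework, so I would invoke the standard well-posedness theory: mollify $b_t^\mu$ in the spatial variable to a locally Lipschitz sequence $b_t^{\mu,n}$, which preserves the one-sided monotonicity with the same constant $K_1$ (convolution against a smooth probability density preserves such inequalities), solve the resulting Lipschitz SDEs, and extract a pathwise-unique strong limit by applying the monotonicity bound to differences of solutions.

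For \eqref{mof}, It\^o's formula applied to $|X_{s,r}^\mu|^2$, combined with the spatial monotonicity at $y={\bf 0}$ and the $\sigma$-growth, gives
\begin{align*}
\d|X_{s,r}^\mu|^2\leq \bigl(C+C|X_{s,r}^\mu|^2+C\mu_r(|\cdot|^2)\bigr)\d r+\d M_r
\end{align*}
for some local martingale $M_r$. Taking expectations and applying Gronwall produces the pointwise bound $\E|X_{s,r}^\mu|^2\leq C(T)(1+\E|X_{s,s}^\mu|^2+\int_s^T\mu_r(|\cdot|^2)\d r)$, after which Burkholder--Davis--Gundy applied to $M_r$, together with absorption of the resulting $\tfrac12\E\sup_{r\leq T}|X_{s,r}^\mu|^2$-term on the right-hand side, upgrades this to the sup-moment estimate \eqref{mof}.

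For (2) I would run a Banach contraction argument in $C([0,T],\scr P_2(\R^d))$ equipped with the metric $d_\lambda(\mu,\nu):=\sup_{t\in[0,T]}\e^{-\lambda t}\W_2(\mu_t,\nu_t)^2$ for $\lambda$ to be chosen. Given $\gamma\in\scr P_2(\R^d)$, use part (1) to define $\Psi(\mu)_t:=\L_{X_{0,t}^{\mu}}$ with initial law $\gamma$; continuity of $t\mapsto\Psi(\mu)_t$ in $\W_2$ follows from \eqref{mof} and the path continuity of $X_{0,\cdot}^{\mu}$. For two flows $\mu,\nu$ with the same initial datum and Brownian motion, It\^o's formula on $|X_t^\mu-X_t^\nu|^2$ combined with the full \eqref{Mon} produces
\begin{align*}
\E|X_t^\mu-X_t^\nu|^2\leq K_I\int_0^t\e^{K_1(t-s)}\W_2(\mu_s,\nu_s)^2\d s,
\end{align*}
so that $d_\lambda(\Psi(\mu),\Psi(\nu))\leq K_I\e^{K_1T}\lambda^{-1}d_\lambda(\mu,\nu)$, a contraction for $\lambda>K_I\e^{K_1T}$. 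Banach's fixed point theorem then yields a unique $\mu^\ast\in C([0,T],\scr P_2(\R^d))$ with $\Psi(\mu^\ast)=\mu^\ast$; the associated $X^{\mu^\ast}$ is the unique strong solution of \eqref{E1} on $[0,T]$, and $T$ being arbitrary gives global well-posedness. Finally \eqref{mof13} follows from \eqref{mof} with $\mu_r=\L_{X_r}$ and a Gronwall loop on $t\mapsto\E|X_t|^2$.

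The main obstacle is the absence of local Lipschitz continuity of $b,\sigma$ in the spatial variable in part (1), which rules out a direct Picard iteration. The resolution is the mollification-plus-monotonicity route sketched above, whose crucial ingredient is that convolution with a smooth probability density preserves the one-sided monotonicity with the same constant $K_1$; once this is in place, differences of approximating solutions satisfy a closed Gronwall estimate yielding tightness and a pathwise strong limit. After this step, both (1) and (2) reduce to routine It\^o/Gronwall/Banach arguments.
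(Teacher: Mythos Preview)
Your approach is essentially the paper's: specialize {\bf(A)} to the frozen flow to land in the monotone-SDE framework for part~(1), derive the sup-moment bound by It\^o/BDG/Gronwall, and for part~(2) run a Banach contraction on $C([0,T],\scr P_2(\R^d))$ with an exponentially weighted $\W_2$-metric. The paper does exactly this, citing \cite[Theorem~3.1.1]{PR} for the well-posedness of \eqref{EMU} rather than reproving it.

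One technical caveat in your sketch of (1): you propose to mollify only $b_t^\mu$ and then ``solve the resulting Lipschitz SDEs'', but under {\bf(A)} the diffusion $\sigma_t(\cdot,\mu_t)$ is merely continuous in $x$, so the approximating SDE $(b^{\mu,n},\sigma^\mu)$ is not Lipschitz and you cannot appeal to classical strong existence for it. Mollifying only $b$ also does not preserve the \emph{combined} monotonicity $2\<b^n(x)-b^n(y),x-y\>+\|\sigma(x)-\sigma(y)\|_{HS}^2\le K_1|x-y|^2$. The fix is either to mollify $\sigma$ as well (Jensen then shows the combined inequality survives), or simply to invoke the monotone-SDE existence theorem directly as the paper does; the paper's own mollification/Yosida machinery in the appendix is developed only for the case $\sigma_t=\sigma_t(\mu_t)$ and is used for the log-Sobolev inequality, not for Theorem~\ref{wel}.
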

\begin{proof}
	(1) Fix $T>0$ and $\mu_\cdot\in C([0,\infty);\scr P_2(\R^d))$. Without loss of generality, we show the assertion for $s=0$, i.e. we consider
	\beq\label{EMU} \d X_t= b_t(X_t, \mu_t)\d t+  \si_t(X_t,\mu_t)\d W_t,\ \  t\in [0,T].
\end{equation}
We first note that {\bf(A)} implies
\begin{align}\label{Mon13}2\<b_t(x,\mu_t)-b_t(y,\mu_t),x-y\>+\|\sigma_t(x,\mu_t)-\sigma_t(y,\mu_t)\|_{HS}^2\leq K_1|x-y|^2,
\end{align}
\begin{align}\label{inner}
	\nonumber 2\<b_t(x,\mu_t),x\>
	&\leq 2\<b_t(x,\mu_t)-b_t(0,\delta_0),x\>+2\<b_t(0,\delta_0),x\>\\
	&\leq K_1|x|^2+K_I\mu_t(|\cdot|^2)+2|b_t(0,\delta_0)||x|,
\end{align}
and
\begin{align}\label{HSQ}
	&\|\sigma_t(x,\mu_t)\|_{HS}^2\leq K_1(1+|x|^2+\mu_t(|\cdot|^2)).
\end{align}
Since $b$ is bounded on bounded sets of $[0,\infty)\times\R^d\times\scr P_2(\R^d)$ and $\mu_\cdot\in C([0,\infty);\scr P_2(\R^d))$, we derive from \eqref{HSQ} that for any $R>0$,
\begin{align*}
	\int_0^T\sup_{|x|\leq R}\left(\|\sigma_t(x,\mu_t)\|_{HS}^2+|b_t(x,\mu_t)|\right)\d t
	&\leq T K_1\left(1+R^2+\sup_{t\in[0,T]}\mu_t(|\cdot|^2)\right)\\
	&+T\sup_{t\in[0,T],|x|\leq R,\gamma(|\cdot|^2)\leq \sup_{t\in[0,T]}\mu_t(|\cdot|^2)}|b_t(x,\gamma)|<\infty.
\end{align*}
Combining this with the continuity of $b_t(\cdot,\mu_t), \sigma_t(\cdot,\mu_t)$ due to {\bf(A)}, \eqref{Mon13}, \eqref{inner} and \eqref{HSQ}, we may apply \cite[Theorem 3.1.1]{PR} to conclude that \eqref{EMU} is strongly well-posed.

In order to show \eqref{mof} apply  It\^{o}'s formula to \eqref{EMU} and find
\begin{align}\label{dty}
	\d |X_t|^2=2\<b_t(X_t,\mu_t),X_t\>\d t+\|\sigma_t(X_t,\mu_t)\|_{HS}^2\d t+2\<\si_t(X_t,\mu_t)\d W_t, X_t\>.
\end{align}
Now we define a sequence of stopping times $\tau_{n}:=T\wedge\inf\{t\geq 0, |X_t|\geq n\},\ \ n\geq 1$ where by convention $\inf \emptyset=\infty$. By the Burkholder-Davis-Gundy inequality, we deduce that
\begin{align*}
	&\E\sup_{s\in[0,t]}\left|\int_0^{s\wedge \tau_n}2\<\si_r(X_r,\mu_r)\d W_r, X_r\>\right|\\
	&\leq C_0\E\left|\int_0^{t\wedge \tau_n}(1+|X_r|^2+\mu_r(|\cdot|^2))|X_r|^2\d r\right|^{\frac{1}{2}}\\
	&\leq \frac{1}{2}\E\sup_{s\in[0,t\wedge\tau_n]}|X_s|^2+C_1\E\int_0^{t\wedge \tau_n}(1+|X_r|^2+\mu_r(|\cdot|^2))\d r
\end{align*}
This together with \eqref{dty}, \eqref{inner}, \eqref{HSQ}, $\sup_{t\in[0,T]}|b_t(0,\delta_0)|<\infty$ and Gronwall's inequality implies
\begin{align}\label{tau}
	\E\sup_{s\in[0,t\wedge\tau_n]}|X_s|^2\leq \e^{C_2 t}\left(\E|X_0|^2+C_2\int_0^t(1+\mu_r(|\cdot|^2))\d r\right),\ \ t\in[0,T].
\end{align}
As a result, it follows that
\begin{align*}
	\P(\tau_n\leq  T)\leq \P\left(\sup_{s\in[0,T\wedge\tau_n]}|X_s|^2\geq n^2\right)&\leq \frac{\E\sup_{s\in[0,T\wedge\tau_n]}|X_s|^2}{n^2}\\
	&\leq \frac{\e^{C_2 T}\left[\E|X_0|^2+C_2\int_0^T(1+\mu_r(|\cdot|^2))\d r\right]}{n^2}.
\end{align*}
Letting $n\to\infty$, we have
\begin{align}\label{litau}
	\lim_{n\to\infty}\P(\tau_n\leq  T)=\P(\lim_{n\to\infty}\tau_n\leq  T)=0.
\end{align}
Combining this with \eqref{tau} and Fatou's lemma, we obtain \eqref{mof}.

(2) In order to show (2),
for any $\gamma\in\scr P_2(\R^d)$ and $\mu_\cdot\in C([0,\infty);\scr P_2(\R^d))$, define  $$\Phi_t^\gamma(\mu)=\L_{X_t},\ \ t\in[0,T]$$
for $X_t$ solving \eqref{EMU} with $\L_{X_0}=\gamma$.
Note that by \eqref{mof} and the dominated convergence theorem, we get
\begin{align}\label{const}\lim_{s\to t}\E|X_s-X_t|^2=0,\ \ t\in[0,T].
\end{align}

It follows from \eqref{const} that $\Phi^\gamma$ is a map on $C([0,T];\scr P_2(\R^d))$.
Set $\scr C_T^\gamma=\{\mu\in C([0,T];\scr P_2(\R^d)):\mu_0=\gamma\}$. Since \eqref{mof13} follows from \eqref{mof} with $\mu_r=\L_{X_r}$ and Gronwall's inequality, it remains to prove that for large enough $\lambda>0$, $\Phi^\gamma$ is a contractive map under  $$\rho_\lambda (\mu^1,\mu^2):=
\sup_{t\in[0,T]}\e^{-\lambda t}\W_2(\mu^1_t,\mu^2_t),\ \ \mu^1,\mu^2\in C([0,T];\scr P_2(\R^d)). $$

Let $\L_{X_0^1}=\gamma$, $\mu^1,\mu^2\in \scr C_T^\gamma$ and $X_t^i, i=1,2 $ solve \eqref{EMU} with $\mu^i,i=1,2$ replacing $\mu$ and $X_0^1=X_0^2$.
By It\^{o}'s formula and \eqref{Mon}, we have
\begin{align*}
	\d |X_t^1-X_t^2|^2&\leq K_1|X_t^1-X_t^2|^2\d t+K_I\W_2(\mu_t^1,\mu_t^2)^2\d t\\
	&+  2\<[\si_t(X^1_t,\mu_t^1)-\si_t(X^2_t,\mu_t^2)]\d W_t,X_t^1-X_t^2\>.
\end{align*}
Let $\zeta_n=T\wedge\inf\{t\geq 0:|X_t^1|\geq n\}\wedge\inf\{t\geq 0:|X_t^2|\geq n\}, n\geq 1$. So, we arrive at
\begin{align*}
	\E |X_{t\wedge\zeta_n}^1-X_{t\wedge\zeta_n}^2|^2&\leq \int_0^{t}\left(K_1\E|X_s^1-X_s^2|^2\d s+K_I\W_2(\mu_s^1,\mu_s^2)^2\right)\d s,\ \ t\in[0,T].
\end{align*}
By \eqref{litau} for $X_t^i,i=1,2$ replacing $X_t$, we have $\P$-a.s. $\lim_{n\to\infty}\zeta_n=T$. Letting $n\to\infty$, Fatou's lemma implies
\begin{align*}
	\E |X_{t}^1-X_{t}^2|^2&\leq \int_0^{t}\left(K_1\E|X_s^1-X_s^2|^2\d s+K_I\W_2(\mu_s^1,\mu_s^2)^2\right)\d s.
\end{align*}
Gronwall's inequality yields
\begin{align*}
	\W_2(\Phi_t^\gamma(\mu^1),\Phi_t^\gamma(\mu^2))^2\leq \E |X_{t}^1-X_{t}^2|^2&\leq K_I\e^{K_1 T}\int_0^{t}\W_2(\mu_s^1,\mu_s^2)^2\d s.
\end{align*}
So, we conclude that
\begin{align*}
	\sup_{t\in[0,T]}\e^{-2\lambda t}\W_2(\Phi_t^\gamma(\mu^1),\Phi_t^\gamma(\mu^2))^2\leq \frac{K_I\e^{K_1 T}}{2\lambda }\sup_{t\in[0,T]}\e^{-2\lambda t}\W_2(\mu^1_t,\mu^2_t)^2.
\end{align*}
Taking $\frac{K_I\e^{K_1 T}}{2\lambda_0 }=\frac{1}{4}$, we get
\begin{align*}
	\sup_{t\in[0,T]}\e^{-\lambda_0 t}\W_2(\Phi_t^\gamma(\mu^1),\Phi_t^\gamma(\mu^2))\leq \frac{1}{2}\sup_{t\in[0,T]}\e^{-\lambda_0 t}\W_2(\mu^1_t,\mu^2_t).
\end{align*}
The proof is completed by the Banach fixed point theorem.
\end{proof}
\subsection{Methods of approximation}
In this part, we provide two kinds of approximation, one is the Yosida approximation in Lemma \ref{yap} and the other is the approximation by mollifier in Lemma \ref{aky}.

Let $\hat{b}:[0,\infty)\times\R^d\to\R^d,$ $\hat{\sigma}:[0,\infty)\to\R^d\otimes\R^n$ be measurable. Consider
\begin{align}\label{mut}\d \hat{X}_{s,t}=\hat{b}_t(\hat{X}_{s,t})\d t+\hat{\sigma}_t\d W_t,\ \ t\geq s\geq 0.
\end{align}

\subsubsection{Yosida approximation}

We make the following assumption.
\begin{enumerate}
\item[{\bf(A1)}] $\hat{b}$ and $\hat{\sigma}$ are locally bounded and for any $t\geq0$, $b_t$ is continuous on $\R^d$. There exists a locally bounded function $K:[0,\infty)\to[0,\infty)$ such that
\begin{align}\label{monbt}
2\<\hat{b}_t(x_1)-\hat{b}_t(x_2),x_1-x_2\>\leq K(t)|x_1-x_2|^2,\ \ x_1,x_2\in\R^d,t\geq 0.
\end{align}
\end{enumerate}
Let
    $
        \tilde{b}_t(x):=\hat{b}_t(x)
        -\frac12 K(t)x,\ \ x\in\R^{d}.
    $
    Then for any $t\geq 0$, $\tilde{b}_t$ is also continuous and
    $$\<\tilde{b}_t(x)-\tilde{b}_t(y),x-y\>\leq 0,\ \ t\geq 0,x,y\in\R^d.$$
    For any $n\geq 1$, let
    $$
        \tilde{b}^{(n)}_t(x)
        :=n\left[
        \left(\operatorname{id}
        -\frac{1}{n}\tilde{b}_t\right)^{-1}(x)-x
        \right],\quad x\in\R^d,
    $$
    where $\mathrm{id}$ is the identity map on $\R^d$. By \cite[Proposition D.11]{GZ}(see also \cite[Section 2]{GRW}),
for any $n\geq 1$, $|\tilde{b}^{(n)}|\leq|\tilde{b}|$, $\lim_{n\to\infty}\tilde{b}^{(n)}=\tilde{b}$,
and
$$\<\tilde{b}_t^{(n)}(x)-\tilde{b}_t^{(n)}(\tilde{x}),x-\tilde{x}\>\leq 0,\ \ |\tilde{b}^{(n)}_t(x)-\tilde{b}^{(n)}_t(\tilde{x})|\leq 2n|x-\tilde{x}|,\ \ t\geq 0, x,\tilde{x}\in \R^d, n\geq 1. $$
Moreover, let
    \begin{align}\label{bhn}\hat{b}_t^{(n)}(x):=\tilde{b}^{(n)}_t(x)+\frac12 K(t)x, \ \ t\geq 0,x\in\R^d, n\geq 1.
    \end{align} Then $\hat{b}^{(n)}$ satisfies
\begin{align}\label{bnl}|\hat{b}^{(n)}_t(x)-\hat{b}^{(n)}_t(\tilde{x})|\leq \left(2n+\frac{1}{2}K(t)\right)|x-\tilde{x}|,\ \ t\geq 0, x,\tilde{x}\in \R^d,n\geq 1,
\end{align}
\begin{align}\label{bt0}|\hat{b}^{(n)}_t(0)|=|\tilde{b}^{(n)}_t(0)|\leq |\tilde{b}_t(0)|=|\hat{b}_t(0)|,\ \ t\geq 0,n\geq 1,
\end{align}
    and
    \begin{equation}\label{jg4dv7}
        2\<\hat{b}^{(n)}_t(x)-\hat{b}^{(n)}_t(y),x-y\>\leq K(t)|x-y|^2,\ \ n\geq 1, x,y\in\R^d, t\geq 0.
    \end{equation}
     Let $(\hat{X}_{s,t}^{(n)})_{0\leq s\leq t}$ solve \eqref{mut} with $\hat{b}^{(n)}$ replacing $\hat{b}$ and $\hat{X}_{s,s}^{(n)}=\hat{X}_{s,s}$. Then we have the following result.
\begin{lem}\label{yap} Assume {\bf(A1)}. Then for any $T>0$, there exists a constant $C(T)>0$ such that
\begin{align}\label{sup13}
\sup_{n\geq 1}\E\sup_{t\in[s,T]}|\hat{X}_{s,t}^{(n)}|^2\leq C(T)(1+\E|\hat{X}_{s,s}|^2),
\end{align}
and for any $t\geq s$, $\lim_{n\to\infty} |\hat{X}_{s,t}^{(n)}-\hat{X}_{s,t}|^2=0.$
\end{lem}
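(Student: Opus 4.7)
The plan is to establish the two assertions separately; both rest on Itô's formula applied to squared norms, with the monotonicity \eqref{jg4dv7} playing the role of a curvature-type lower bound that is preserved under the Yosida approximation.

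For the uniform moment estimate \eqref{sup13}, I apply Itô's formula to $|\hat X_{s,t}^{(n)}|^2$ and split
$$2\<\hat b^{(n)}_r(\hat X^{(n)}_{s,r}), \hat X^{(n)}_{s,r}\> = 2\<\hat b^{(n)}_r(\hat X^{(n)}_{s,r}) - \hat b^{(n)}_r(0), \hat X^{(n)}_{s,r}\> + 2\<\hat b^{(n)}_r(0), \hat X^{(n)}_{s,r}\>.$$
The monotonicity \eqref{jg4dv7} with $y=0$ controls the first summand by $K(r)|\hat X^{(n)}_{s,r}|^2$, while the uniform bound \eqref{bt0} combined with Young's inequality dominates the second by $|\hat b_r(0)|^2+|\hat X^{(n)}_{s,r}|^2$; both bounds are independent of $n$. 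Applying the Burkholder--Davis--Gundy inequality to the martingale $\int_s^{\cdot}2\<\hat\sigma_r\,dW_r, \hat X^{(n)}_{s,r}\>$ and absorbing $\tfrac12 \E\sup_{t\in[s,T]}|\hat X^{(n)}_{s,t}|^2$ into the left-hand side, Gronwall's lemma together with the local boundedness of $|\hat b_r(0)|$ and $\|\hat\sigma_r\|_{HS}$ on $[s,T]$ produces a constant $C(T)$ independent of $n$.

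For the convergence, set $Z_t := \hat X^{(n)}_{s,t} - \hat X_{s,t}$. Since $\hat\sigma_r$ is shared by both SDEs, $Z$ is of bounded variation and Itô's formula gives
$$|Z_t|^2 = \int_s^t 2\<Z_r, \hat b^{(n)}_r(\hat X^{(n)}_{s,r}) - \hat b^{(n)}_r(\hat X_{s,r})\>\,dr + \int_s^t 2\<Z_r, \hat b^{(n)}_r(\hat X_{s,r}) - \hat b_r(\hat X_{s,r})\>\,dr.$$
By \eqref{jg4dv7}, the first integrand is bounded by $K(r)|Z_r|^2$; Young's inequality estimates the second by $|Z_r|^2 + |\hat b^{(n)}_r(\hat X_{s,r}) - \hat b_r(\hat X_{s,r})|^2$. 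Taking expectation and invoking Gronwall yields
$$\E|Z_t|^2 \leq C(T)\,\E\int_s^T |\hat b^{(n)}_r(\hat X_{s,r}) - \hat b_r(\hat X_{s,r})|^2\,dr.$$

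To send the right-hand side to zero I use the pointwise convergence $\hat b^{(n)}_r(x) \to \hat b_r(x)$ inherited from $\tilde b^{(n)} \to \tilde b$ (as recalled before the lemma). The main obstacle is domination: the available bound $|\hat b^{(n)}_r(x)| \leq |\hat b_r(x)| + K(r)|x|$, derived from $|\tilde b^{(n)}| \leq |\tilde b|$, relies only on local boundedness of $\hat b$, so there is no global $L^2$-majorant along the trajectory. I therefore localize with $\tau_R := T \wedge \inf\{t \geq s : |\hat X_{s,t}| \geq R\}$; on $[s,\tau_R]$ the integrand is dominated by $6\sup_{|x|\le R,\,r\in[s,T]}|\hat b_r(x)|^2 + 4K(r)^2 R^2<\infty$, and the dominated convergence theorem gives $\E|Z_{t\wedge \tau_R}|^2\to 0$ as $n\to\infty$ for every fixed $R$. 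Assertion (1), applied to $\hat X_{s,t}$, implies $\P(\tau_R < T)\to 0$ as $R\to\infty$, so the Chebyshev-type bound
$$\P(|\hat X^{(n)}_{s,t} - \hat X_{s,t}| > \varepsilon) \leq \P(\tau_R < T) + \varepsilon^{-2}\E|Z_{t\wedge \tau_R}|^2$$
yields convergence in probability, from which the stated pointwise limit follows along a subsequence.
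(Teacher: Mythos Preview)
Your argument for the uniform moment bound \eqref{sup13} is essentially the paper's: It\^o's formula, the monotonicity \eqref{jg4dv7} at $y=0$, the uniform control \eqref{bt0} on $\hat b^{(n)}_r(0)$, BDG and Gronwall.

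For the convergence part there is a genuine gap. You correctly observe that the noise cancels, so $Z_t=\hat X^{(n)}_{s,t}-\hat X_{s,t}$ is of bounded variation and the differential identity for $|Z_t|^2$ contains no martingale term. But then you take expectation anyway, which forces you into the localization with $\tau_R$ and leaves you with convergence in probability only; your final sentence extracts merely a subsequence, whereas the lemma asserts convergence of the \emph{full} sequence.

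The paper exploits the absence of a martingale by running Gronwall \emph{pathwise}: from
\[
\d|Z_r|^2 \le (K(r)+1)|Z_r|^2\,\d r + \big|\hat b^{(n)}_r(\hat X_{s,r})-\hat b_r(\hat X_{s,r})\big|^2\,\d r
\]
one obtains, for each $\omega$,
\[
|Z_t(\omega)|^2 \le \e^{(\sup_{r\in[s,t]}K(r)+1)(t-s)}\int_s^t \big|\tilde b^{(n)}_r(\hat X_{s,r}(\omega))-\tilde b_r(\hat X_{s,r}(\omega))\big|^2\,\d r.
\]
For fixed $\omega$ the continuous path $r\mapsto \hat X_{s,r}(\omega)$ stays in some ball $B_{R(\omega)}$ on $[s,t]$; local boundedness of $\tilde b$ together with $|\tilde b^{(n)}|\le|\tilde b|$ then supplies an $\omega$-dependent dominant, and the dominated convergence theorem in $r$ (not in $\omega$) gives $|Z_t(\omega)|^2\to 0$ for every $\omega$, i.e.\ full-sequence a.s.\ convergence. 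Your expectation step discards exactly this pathwise structure.
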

\begin{proof} Repeating the proof of \eqref{mof}, we derive \eqref{sup13} by \eqref{jg4dv7} and \eqref{bt0}. Fix $t\geq s$.
    By It\^{o}'s formula and {\bf (A1)}, we derive
    \begin{align*}
        &\d|\hat{X}_{s,r}^{(n)}-\hat{X}_{s,r}|^2\leq (K(r)+1)|\hat{X}_{s,r}^{(n)}-\hat{X}_{s,r}|^2\d r+\big|\hat{b}^{(n)}_r(\hat{X}_{s,r})-\hat{b}_r(\hat{X}_{s,r})\big|^2\,\d r.
    \end{align*}
 Then Gronwall's inequality implies that
\begin{align*}
|X_{s,t}^{(n)}-X_{s,t}|^2&\leq\e^{(\sup_{r\in[s,t]}K(r)+1)(t-s)}\int_s^{t}\big|\tilde{b}^{(n)}_r(\hat{X}_{s,r}) -\tilde{b}_r(\hat{X}_{s,r})\big|^2\,\d r.
\end{align*}
    Since $\tilde{b}$ is locally bounded with $|\tilde{b}^{(n)}|\leq |\tilde{b}|$ and $\lim_{n\to\infty}\tilde{b}^{(n)}=\tilde{b}$,     the dominated convergence theorem yields that for any $n\geq 1$,
    $$\lim_{n\to\infty} |\hat{X}_{s,t}^{(n)}-\hat{X}_{s,t}|^2=0.$$
    The proof is now finished.
    \end{proof}
\subsubsection{Approximation by mollifier}
Next, we construct an approximation by mollifier. We make the following assumptions.
\begin{enumerate}
\item[{\bf(A2)}] There exists a locally bounded function $L:[0,\infty)\to[0,\infty)$ such that
\begin{align}
\label{btL}|\hat{b}_t(x)-\hat{b}_t(\bar{x})|\leq L_t|x-\bar{x}|,\ \ t\geq 0
\end{align}
and
\begin{align}\label{bsg}
|\hat{b}_t(0)|+\|\hat{\sigma}_t\|\leq L_t.
\end{align}
\end{enumerate}
 Let $0\leq \rho\in C_0^\infty(\R^d)$  satisfying $\int_{ \R^d}\rho(x)\d x=1$ and for any $m\geq 1$, set
$$\rho^m(x)=m^{d}\rho(mx)$$
and
\begin{align}\label{bmt}
\hat{b}^{m}_t(x)=\int_{\R^d}\hat{b}_t(x-y)\rho^m(y)\d y.
\end{align}
Then it is not difficult to see from \eqref{btL} that
\begin{align}\label{uniform}\sup_{x\in\R^d}|\hat{b}^{m}_t(x)-\hat{b}_t(x)|\leq L_t\int_{\R^d}|y|\rho^m(y)\d y=L_t\frac{1}{m}\int_{\R^d}|y|\rho(y)\d y,
\end{align}
    \begin{align}\label{uniform1}
        \nonumber&|\hat{b}^{m}_t(x)-\hat{b}^{m}_t(\tilde{x})|\\
        &\leq \int_{\R^d}|\hat{b}_t(x-y)-\hat{b}_t(\tilde{x}-y)|\rho^m(y)\d y\leq L_t|x-\tilde{x}|,\ \ m\geq 1, x,\tilde{x}\in\R^d, t\geq 0,
    \end{align}
\begin{align}\label{growthli}|\hat{b}^{m}_t(x)|\leq |\hat{b}^{m}_t(x)-\hat{b}_t(x)|+|\hat{b}_t(x)|\leq L_t\frac{1}{m}\int_{\R^d}|y|\rho(y)\d y+L_t|x|+|\hat{b}_t(0)|,
\end{align}
and for any $m\geq 1, i\geq 1$, we can find a locally bounded function $\ell^{(m),i}:[0,\infty)\to[0,\infty)$ only depending on $L_t,t\geq 0$ such that
\begin{align}\label{ktl13}\|\nabla^i\hat{b}^{m}_t\|_\infty\leq \ell^{(m),i}_t, \ \ t\geq 0.
\end{align}
Moreover,  if \eqref{monbt} holds,
then one has
    \begin{align}\label{bmm}
\nonumber        2\<\hat{b}^{m}_t(x)-\hat{b}^{m}_t(\tilde{x}),x-\tilde{x}\>&=2\int_{\R^d}\<\hat{b}_t(x-y)-\hat{b}_t(\tilde{x}-y),x-y-(\tilde{x}-y)\>\rho^m(y)\d y\\
      &\leq K(t)|x-\tilde{x}|^2,\ \ m\geq 1, x,\tilde{x}\in\R^d, t\geq 0.
    \end{align}
    This means that the approximation by mollifier keeps the monotone property.
Let $\hat{X}_{s,t}^{m}$ solve \eqref{mut} with $\hat{b}^{m}$ replacing $\hat{b}$ and $\hat{X}_{s,s}^{m}=\hat{X}_{s,s}$.
\begin{lem}\label{aky} Assume {\bf(A2)}. Then for any $T>0$, there exists a constant $C(T)>0$ such that
\begin{align}\label{supmh}
\sup_{m\geq 1}\E\sup_{t\in[s,T]}|\hat{X}_{s,t}^{m}|^2\leq C(T)(1+\E|\hat{X}_{s,s}|^2),
\end{align}
and for any $t\geq s$, $\lim_{m\to\infty} |\hat{X}_{s,t}^{m}-\hat{X}_{s,t}|=0.$
\end{lem}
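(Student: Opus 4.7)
The plan is to mirror the Yosida argument of Lemma \ref{yap}, now exploiting the \emph{uniform-in-$x$} estimate \eqref{uniform} that the mollifier provides. For \eqref{supmh}, I would apply It\^o's formula to $|\hat{X}_{s,t}^{m}|^2$ and combine \eqref{growthli} with \eqref{bsg} to obtain
\begin{align*}
2\<\hat{b}_t^m(x),x\>+\|\hat\sigma_t\|_{HS}^2 \leq C_T(1+|x|^2),\quad t\in[s,T],\, m\geq 1,
\end{align*}
with $C_T$ depending only on $\sup_{t\in[s,T]}L_t$ (since $L_t/m\leq L_t$). Introducing the stopping times $\tau_n:=T\wedge\inf\{r\geq s:|\hat{X}_{s,r}^m|\geq n\}$, a standard Burkholder--Davis--Gundy bound on the stochastic integral, absorption of $\tfrac12\E\sup_{r\in[s,t\wedge\tau_n]}|\hat{X}_{s,r}^m|^2$ into the left-hand side, followed by Gronwall and Fatou as $n\to\infty$, delivers \eqref{supmh}. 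This bookkeeping is essentially identical to the derivation of \eqref{mof} (or \eqref{sup13}); the only point to verify is that every constant is $m$-independent, which is automatic from \eqref{uniform1}--\eqref{growthli}.

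For the convergence statement, fix $t\geq s$. Since $\hat{X}^m$ and $\hat{X}$ share the same Brownian integrator and the same diffusion coefficient $\hat\sigma$, the martingale part vanishes in $|\hat{X}_{s,\cdot}^m-\hat{X}_{s,\cdot}|^2$. Splitting
\begin{align*}
\hat{b}_r^m(\hat{X}_{s,r}^m)-\hat{b}_r(\hat{X}_{s,r}) = \bigl[\hat{b}_r^m(\hat{X}_{s,r}^m)-\hat{b}_r^m(\hat{X}_{s,r})\bigr] + \bigl[\hat{b}_r^m(\hat{X}_{s,r})-\hat{b}_r(\hat{X}_{s,r})\bigr],
\end{align*}
pairing against $\hat{X}_{s,r}^m-\hat{X}_{s,r}$, bounding the first bracket via the uniform Lipschitz estimate \eqref{uniform1} and the second by Cauchy--Schwarz, It\^o's formula yields
\begin{align*}
\d|\hat{X}_{s,r}^m-\hat{X}_{s,r}|^2 \leq (2L_r+1)|\hat{X}_{s,r}^m-\hat{X}_{s,r}|^2\,\d r + |\hat{b}_r^m(\hat{X}_{s,r})-\hat{b}_r(\hat{X}_{s,r})|^2\,\d r.
\end{align*}
Gronwall's inequality then gives
\begin{align*}
|\hat{X}_{s,t}^m-\hat{X}_{s,t}|^2 \leq \exp\!\Bigl(\textstyle\int_s^t(2L_r+1)\,\d r\Bigr)\int_s^t|\hat{b}_r^m(\hat{X}_{s,r})-\hat{b}_r(\hat{X}_{s,r})|^2\,\d r.
\end{align*}

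To conclude, I invoke \eqref{uniform}: $|\hat{b}_r^m(x)-\hat{b}_r(x)|\leq L_r\,m^{-1}\int_{\R^d}|y|\rho(y)\,\d y$ uniformly in $x\in\R^d$, so the right-hand side is a deterministic sequence vanishing at rate $m^{-2}$. This yields $|\hat{X}_{s,t}^m-\hat{X}_{s,t}|\to 0$ almost surely, and in fact in $L^\infty(\P)$, completing the proof. The only substantive point, and the sole advantage over the Yosida argument of Lemma \ref{yap}, is that \eqref{uniform} is a uniform-in-$x$ bound: no dominated convergence argument against the random trajectory $\hat{X}_{s,r}$ is needed here, so the mollifier approximation is strictly easier to pass to the limit than its Yosida counterpart. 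The only potential obstacle — ensuring that the preservation of monotonicity in \eqref{bmm} is not secretly required for the argument — does not actually arise, since the Lipschitz estimate \eqref{uniform1} already dominates the inner product term pathwise and monotonicity plays no role in the closed Gronwall loop above.
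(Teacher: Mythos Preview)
Your proposal is correct and follows essentially the same route as the paper: \eqref{supmh} is obtained by repeating the argument for \eqref{mof} using \eqref{growthli} and \eqref{bsg}, and the convergence follows from the drift splitting together with \eqref{uniform1}, Gronwall, and the uniform bound \eqref{uniform}. The only cosmetic difference is that the paper works directly with $|\hat{X}_{s,t}^{m}-\hat{X}_{s,t}|$ rather than its square (since the shared diffusion makes the difference a pathwise ODE, It\^o's formula is not even needed), but this changes nothing of substance.
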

\begin{proof} By \eqref{growthli} and \eqref{bsg}, one can repeat the proof of \eqref{mof} to derive \eqref{supmh}.
It follows from \eqref{uniform} and \eqref{uniform1} that
    \begin{align*}
        |\hat{X}_{s,t}^{m}-\hat{X}_{s,t}|&\leq \int_s^tL_r|\hat{X}_{s,r}^{m}-\hat{X}_{s,r}|\d r+\int_s^t\big|\hat{b}^{m}_r(\hat{X}_{s,r})-\hat{b}_r(\hat{X}_{s,r})\big|\,\d r\\
        &\leq \int_s^tL_r|\hat{X}_{s,r}^{m}-\hat{X}_{s,r}|\d r+\frac{1}{m}\int_{\R^d}|y|\rho(y)\d y\int_s^tL_r\d r.
    \end{align*}
The proof is completed by first using Gronwall's lemma and then letting $m$ go to infinity.
\end{proof}

\end{document}